\documentclass[11pt]{amsart}
\usepackage{amsthm}
\usepackage{graphicx,amsmath,amssymb}
\usepackage{color}
\usepackage{enumerate}
\textheight 22.5  true cm \textwidth 16 true cm \voffset -1.0 true
cm \hoffset -1.5 true cm \marginparwidth= 2 true cm

\newtheorem{thm}{Theorem}[section]

\newtheorem{lem}[thm]{Lemma}
\newtheorem{prop}[thm]{Proposition}

\newcommand{\supp}{\text{supp }}

\numberwithin{equation}{section}

\newcommand{\al}{\alpha}

\newcommand{\ep}{\varepsilon}

\allowdisplaybreaks[1]

\begin{document}
\title[Almost sure local wellposedness]
{Almost sure local wellposedness of\\ energy critical fractional Schr\"odinger equations\\ with hartree nonlinearity}

\author[G. Hwang]{Gyeongha Hwang}
\address{Department of Mathematical Sciences, Ulsan National Institute of Science and Technology, Ulsan, 689-798, Republic of Korea}
\email{ghhwang@unist.ac.kr}

\keywords{}

\begin{abstract}
We consider a Cauchy problem of energy-critical fractional Schr\"odinger equation with Hartree nonlinearity below the energy space. Using a method of randomization of functions on $\mathbb{R}^d$ associated with the Wiener decomposition, introduced by \'{A}. B\'{e}nyi, T. Oh, and O. Pocovnicu \cite{beohpo1,beohpo2}, we prove that the Cauchy problem is almost surely locally well-posed. Our result includes Hartree Schr\"odinger equation ($\alpha = 2$).
\end{abstract}
\maketitle

\section{Introduction}
 In this paper we consider the following Cauchy problem of the fractional nonlinear Schr\"odinger equations:
\begin{align}\label{main eqn}\left\{\begin{array}{l}
i\partial_tu = |\nabla|^\alpha u + F(u)\;\;
\mbox{in}\;\;\mathbb{R}^{1+d},
\\
u(x,0) = \phi(x) \in H^s\;\; \mbox{in}\;\;\mathbb{R}^d,
\end{array}\right.
\end{align}
where $|\nabla| = (-\Delta)^\frac12$, $d \ge 3$, $1 < \alpha \leq 2$, and
$F(u)$ is the nonlinear term of Hartree type given by
$$
F(u) =
\mu (|\cdot|^{-2\al}*|u|^2)u,\;\; \mu \in \mathbb{R}\setminus \{0\}.
$$

Fractional Schr\"odinger equation appears in fractional quantum mechanics (see \cite{la1, la2, la3}), where Laskin generalized the Brownian-like quantum mechanical path, in the Feynman path integral approach to quantum mechanics, to the $\alpha$-stable L\'evy-like quantum mechanical path.

The solution $u$ of \eqref{main eqn} formally satisfies the mass and energy conservation laws:
\begin{align}\begin{aligned}\label{consv}
&\quad m(u) = \|u(t)\|^2_{L^2}, \\  
&E(u) =  K(u) + P(u), 
\end{aligned}\end{align}
where $$K(u) = \frac12 \big\langle u, |\nabla|^\alpha\,
u\big\rangle, \;\;P(u) = \frac14 \big\langle u, \mu (|x|^{-2\al}*|u|^2)u
\big\rangle.$$ Here $\big\langle\;,\big\rangle$ is the complex inner
product in $L^2$. Hence $H^\frac\al2$ is referred to energy space.

The equation \eqref{main eqn} has a scaling invariance. In fact,
if $u$ is a solution of \eqref{main eqn}, then for any $\lambda > 0$ the scaled function $u_\lambda$ given by
\[
u_\lambda (t, x) = \lambda^{-\frac \al2 + \frac d2} u(\lambda^\alpha \,t, \lambda x )
\]
is also a solution. 
Since $\dot{H}^{\frac \al 2}$-norm is preserved under the scaling $u\mapsto u_\lambda$, \eqref{main eqn} is said to be energy-critical if $s = \frac\al2$. It is also said to be super(sub)-critical if $s < \frac\al2\; (s > \frac\al2$, respectively).


By Duhamel's formula, \eqref{main eqn} is written as an
integral equation
\begin{equation}\label{integral}
u = U(t)\phi -i\mu \int_0^t U(t-t')( (|\cdot|^{-2\al}*|u(t')|^2)u(t'))\,dt'.\end{equation} Here we define the linear propagator $U(t)f$ to be the solution to the linear problem $i\partial_t z = |\nabla|^\al z$ with initial datum $f$. Then it is
formally given  by
\begin{align}\label{int eqn}
U (t)f = \mathcal F^{-1} e^{-it|\xi|^\al} \mathcal F f = (2\pi)^{-d}\int_{\mathbb{R}^d} e^{i( x\cdot \xi - t
|\xi|^\alpha)}\widehat{f}(\xi)\,d\xi,
\end{align}
where $\widehat{f} = \mathcal F f$ denotes the Fourier transform of $f$ such that
$\widehat{f}(\xi) = \int_{\mathbb{R}^d} e^{- ix\cdot \xi} f(x)\,dx$ and  we denote its inverse Fourier transform by  $\mathcal F^{-1} g(x) = (2\pi)^{-d}\int_{\mathbb{R}^d} e^{ix\cdot \xi} g(\xi)\,d\xi$.

For the linear propagator $U(t)$, Strichartz estimate is known to hold
\begin{lem}[Theorem 2 in \cite{cox}]\label{str}
Let $d \geq 2$ and $2/q + d/r = d/2,\; 2\le q, r \le \infty$.
\begin{align*}
\||\nabla|^{-\frac{2-\alpha}{q}}U(t)f\|_{L_T^qL^r} \lesssim \|f\|_{L^2_x}.
\end{align*}
The implicit constant does not depend on $T > 0$.
\end{lem}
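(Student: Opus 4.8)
The plan is to localize in frequency, reduce to a single dyadic block by parabolic rescaling, and obtain the block estimate from a dispersive bound together with the abstract Strichartz machinery of Keel--Tao.

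\emph{Step 1: dispersive estimate on a dyadic block.} Fix a Littlewood--Paley cutoff $\chi$ supported in $\{\tfrac12\le|\xi|\le 2\}$ and let $P_N$, $N\in 2^{\ZZ}$, be the Fourier projection onto $|\xi|\sim N$. The kernel of $P_NU(t)$ is $K_N(t,x)=c_d\int_{\RR^d}e^{i(x\cdot\xi-t|\xi|^\alpha)}\chi(\xi/N)\,d\xi$, and the substitution $\xi=N\eta$ turns it into $K_N(t,x)=c_d N^d\int_{\RR^d}e^{i(Nx\cdot\eta-tN^\alpha|\eta|^\alpha)}\chi(\eta)\,d\eta$. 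On $\supp\chi$ the Hessian of $|\eta|^\alpha$ has all eigenvalues comparable to $1$ (the radial one is $\alpha(\alpha-1)|\eta|^{\alpha-2}$, which is where the hypothesis $\alpha>1$ enters), so stationary phase — together with nonstationary phase when the critical point lies outside $\supp\chi$ — gives $|K_N(t,x)|\lesssim N^d(1+|t|N^\alpha)^{-d/2}$ uniformly in $x$. Hence
\[
\|P_NU(t)P_Ng\|_{L^\infty_x}\lesssim N^{\frac{d(2-\alpha)}{2}}\,|t|^{-\frac d2}\,\|g\|_{L^1_x},
\]
which, together with the conservation identity $\|P_NU(t)P_Ng\|_{L^2_x}\lesssim\|g\|_{L^2_x}$, supplies the two hypotheses of the abstract Strichartz estimate.

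\emph{Step 2: Strichartz on a single block.} Running the Keel--Tao ($TT^*$) argument with time-decay exponent $\sigma=\tfrac d2$ — which satisfies $\sigma\ge\tfrac32>1$ for $d\ge 3$, so the endpoint pair $(q,r)=(2,\tfrac{2d}{d-2})$ is included — and carrying the constant $N^{d(2-\alpha)/2}$ through the interpolation, one obtains for every admissible pair $2/q+d/r=d/2$, $2\le q,r\le\infty$,
\[
\|U(t)P_Nf\|_{L^q_TL^r_x}\lesssim N^{\frac{d(2-\alpha)}{2}(\frac12-\frac1r)}\|P_Nf\|_{L^2_x}=N^{\frac{2-\alpha}{q}}\|P_Nf\|_{L^2_x},
\]
the last equality using $\tfrac d2(\tfrac12-\tfrac1r)=\tfrac1q$. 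Since $|\nabla|^{-(2-\alpha)/q}\sim N^{-(2-\alpha)/q}$ on the range of $P_N$, this is exactly $\||\nabla|^{-(2-\alpha)/q}U(t)P_Nf\|_{L^q_TL^r_x}\lesssim\|P_Nf\|_{L^2_x}$. Alternatively one may invoke Keel--Tao only at $N=1$ and then pass to general $N$ by the exact identity $U(t)(P_Nf)(x)=N^d(U(tN^\alpha)h)(Nx)$ with $\widehat h(\eta)=\widehat{P_Nf}(N\eta)$, under which the $L^q_tL^r_x$ norm scales by $N^{d(1-1/r)-\alpha/q}$ while $\|h\|_{L^2}\sim N^{-d/2}\|P_Nf\|_{L^2}$; by admissibility the exponents again combine to $(2-\alpha)/q$.

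\emph{Step 3: summation.} For $d\ge 3$ admissibility forces $r\in[2,\tfrac{2d}{d-2}]$, hence $r<\infty$ (the only admissible pair with $q=\infty$ is $(\infty,2)$, for which the claim is just the $L^2$ conservation law). The vector-valued Littlewood--Paley inequality therefore applies, and since $q,r\ge 2$ Minkowski's inequality moves the $\ell^2_N$-sum to the outside:
\[
\big\||\nabla|^{-\frac{2-\alpha}{q}}U(t)f\big\|_{L^q_TL^r_x}\lesssim\Big\|\Big(\sum_N\big||\nabla|^{-\frac{2-\alpha}{q}}U(t)P_Nf\big|^2\Big)^{1/2}\Big\|_{L^q_TL^r_x}\le\Big(\sum_N\big\||\nabla|^{-\frac{2-\alpha}{q}}U(t)P_Nf\big\|_{L^q_TL^r_x}^2\Big)^{1/2}\lesssim\Big(\sum_N\|P_Nf\|_{L^2}^2\Big)^{1/2}\lesssim\|f\|_{L^2}.
\]
All implicit constants are independent of $T$ because the dispersive bound of Step 1 holds globally in $t$.

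\emph{Main obstacle.} The crux is Step 1: producing the sharp $|t|^{-d/2}$ dispersive decay with the correct power of $N$, which rests entirely on the nondegeneracy of the Hessian of $|\xi|^\alpha$ on dyadic annuli and so genuinely uses $\alpha>1$ — at $\alpha=1$ the symbol is homogeneous of degree one and one only gets the wave-type decay $|t|^{-(d-1)/2}$. The endpoint pair $(2,\tfrac{2d}{d-2})$ is the remaining delicate point, but it is covered by the Keel--Tao endpoint argument since $\sigma=d/2>1$.
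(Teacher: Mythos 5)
The paper does not prove this lemma; it is quoted verbatim as Theorem 2 of the cited reference \cite{cox}, and your argument is the standard proof given there: a frequency-localized dispersive bound $|K_N(t,x)|\lesssim N^d(1+|t|N^\alpha)^{-d/2}$ from stationary phase (using the nondegeneracy of the Hessian of $|\xi|^\alpha$ for $\alpha>1$), the Keel--Tao machinery on each block with the constant $N^{(2-\alpha)/q}$ tracked through the $TT^*$ interpolation, and Littlewood--Paley plus Minkowski to sum, which is exactly where the derivative loss $|\nabla|^{-(2-\alpha)/q}$ comes from. The computation $N^{\frac{d(2-\alpha)}{2}(\frac12-\frac1r)}=N^{(2-\alpha)/q}$ under the admissibility relation is correct, and the exclusion of $r=\infty$ for $d\ge3$ is handled properly. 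The only caveat is the case $d=2$ with $(q,r)=(2,\infty)$, which your argument (rightly) cannot reach since $\sigma=1$ is the forbidden Keel--Tao endpoint; the lemma as stated does not exclude it, but the paper only ever uses $d\ge3$, and its own Lemma \ref{str-xsb} excludes $(d,q,r)=(2,2,\infty)$ explicitly.
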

Here and after $L_T^qX$ denotes mixed normed space $L^q([-T,T]; X(\mathbb R^d))$ for a Banach space $X$ on $\mathbb R^d$ and $L_t^qX L^q(\mathbb R; X)$. Due to the weak dispersion of $U(t)$ the estimate accompanies a derivative loss of order $2$-$\alpha$. But if one imposes radial assumptions or angularly regular condition on $f$, then a derivative loss can be recovered and even a regularity gain can be obtained (see \cite{cholee, guwa}).

Using Lemma \ref{str}, the local well-posedness of \eqref{main eqn} can be shown in the subcritical case ($s > \frac{\al}{2}$). Actually a little revision of Proposition 4.1 in \cite{chho} gives
\begin{prop}\label{subcri-cauchy}
Let $s > \frac \al2$. If $\phi \in H^s$ then there exists a positive time $T$ such that \eqref{main eqn}
has a unique solution $u \in C([-T,T]; H^s) \cap L^2_T(H_{2d/(d-2)}^{s-(2-\al)/2})$. 
\end{prop}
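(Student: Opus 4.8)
The proof is by a contraction mapping argument applied to the integral equation \eqref{integral}, adapting Proposition 4.1 in \cite{chho} to the fractional Laplacian. Write $\sigma = s - \frac{2-\alpha}{2}$ and fix the pair $(q,r) = \big(2,\frac{2d}{d-2}\big)$, which is Strichartz-admissible in Lemma \ref{str} since $\frac2q + \frac dr = 1 + \frac{d-2}{2} = \frac d2$, and note $\sigma > \alpha - 1 > 0$ thanks to $s > \frac\alpha2$ and $\alpha > 1$. I would work in the space
\[
X_T = \Big\{\, u : \|u\|_{X_T} := \|u\|_{L^\infty_T H^s} + \|u\|_{L^2_T H^{\sigma}_r} < \infty \,\Big\}
\]
and study the map
\[
\Phi(u)(t) = U(t)\phi - i\mu \int_0^t U(t-t')\big( (|\cdot|^{-2\alpha}*|u(t')|^2)\,u(t')\big)\,dt'.
\]

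First I would collect the linear estimates. By the unitarity of $U(t)$ on $H^s$ together with Lemma \ref{str} applied to $|\nabla|^s\phi$ (the admissible loss $\frac{2-\alpha}{q} = \frac{2-\alpha}{2}$ being precisely $s-\sigma$, after a routine low-frequency adjustment), one has $\|U(t)\phi\|_{X_T}\lesssim\|\phi\|_{H^s}$. For the Duhamel term, the dual of Lemma \ref{str} combined with the Christ--Kiselev lemma gives $\big\|\int_0^t U(t-t')F(t')\,dt'\big\|_{X_T}\lesssim\|F\|_{L^1_T H^s}$, so it remains to control the Hartree nonlinearity $N(u):=(|\cdot|^{-2\alpha}*|u|^2)u$ in $L^1_T H^s$.

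The main point is thus the trilinear estimates
\[
\|N(u)\|_{H^s}\lesssim\|u\|_{H^s}^3,\qquad
\|N(u)-N(v)\|_{H^s}\lesssim\big(\|u\|_{H^s}^2+\|v\|_{H^s}^2\big)\|u-v\|_{H^s},
\]
which I would prove by distributing $\langle\nabla\rangle^s$ with the fractional Leibniz (Kato--Ponce) rule, placing some derivatives on the potential factor $|\cdot|^{-2\alpha}*|u|^2$ and the rest on $u$; the potential factor, and the convolution of $|x|^{-2\alpha}$ with derivatives of $|u|^2$ produced by one more Leibniz step, are estimated in suitable Lebesgue spaces by the Hardy--Littlewood--Sobolev inequality, while the remaining $L^p$-norms of $u$ and its derivatives are returned to $\|u\|_{H^s}$ via Sobolev embedding; the energy-criticality of $\frac\alpha2$ is what makes the exponents balance. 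The difference estimate follows by writing $N(u)-N(v)$ as a telescoping sum of three terms and repeating the computation. Integrating over $t\in[-T,T]$ and using H\"older in time, the subcriticality $s>\frac\alpha2$ leaves the critical bound unsaturated and therefore yields a genuine gain
\[
\|N(u)\|_{L^1_T H^s}\lesssim T^{\theta}\|u\|_{X_T}^3,\qquad \theta=\theta(s,\alpha,d)>0,
\]
and the analogous bound for the difference.

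Putting these together, $\Phi$ maps the ball $\{\|u\|_{X_T}\le 2C\|\phi\|_{H^s}\}$ into itself and is a contraction on it as soon as $T=T(\|\phi\|_{H^s})>0$ is small enough (the requirement being that $T^{\theta}\|\phi\|_{H^s}^2$ lie below an absolute threshold depending only on the implicit constants, i.e.\ on $d,\alpha,s$); note $T$ depends only on the size of $\phi$. The Banach fixed point theorem then produces the unique $u\in X_T\subset C([-T,T];H^s)\cap L^2_T H^{\sigma}_r$, uniqueness in the full space follows from the difference estimate after possibly shrinking $T$, and continuity in time comes with membership in $X_T$. The hard part is the trilinear estimate: one must verify that the Kato--Ponce/Hardy--Littlewood--Sobolev splitting of the Hartree term closes with every Lebesgue exponent in the allowed range for the given $d$ and $\alpha$, and, above all, that a strictly positive power of $T$ is retained --- this last being exactly where the assumption $s>\frac\alpha2$ (rather than $s=\frac\alpha2$) enters.
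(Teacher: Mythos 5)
The paper offers no proof of this proposition at all: it states that ``a little revision of Proposition~4.1 in \cite{chho}'' gives it, so the only fair comparison is with the standard contraction argument that reference carries out, and your framework --- contraction in $X_T=C_TH^s\cap L^2_TH^{\sigma}_{2d/(d-2)}$ with $\sigma=s-\tfrac{2-\alpha}{2}$, built on Lemma~\ref{str}, Kato--Ponce, Hardy--Littlewood--Sobolev and Sobolev embedding --- is indeed that argument in outline.

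There is, however, a genuine gap at the step you yourself flag as ``the main point.'' The displayed trilinear estimate $\|N(u)\|_{H^s}\lesssim\|u\|_{H^s}^3$ is false for $\tfrac\alpha2<s<\alpha$, i.e.\ on essentially the whole range of the proposition. To see this, take $u$ a wave packet at frequency $N$ normalized so that $\|u\|_{H^s}\sim 1$; then $|u|\sim N^{d/2-s}$ on a ball of radius $N^{-1}$, so $(|x|^{-2\alpha}*|u|^2)(0)\sim N^{2\alpha-2s}$ and hence $\|N(u)\|_{H^s}\gtrsim N^{2(\alpha-s)}\to\infty$ while $\|u\|_{H^s}^3$ stays bounded. (Equivalently, the scaling $u\mapsto u_\lambda$ shows $\dot H^s\times\dot H^s\times\dot H^s\to\dot H^{s'}$ is consistent only with $s'=3s-2\alpha<s$.) If your pointwise-in-time bound were true, the entire proposition would follow from energy methods in $C_TH^s$ and the Strichartz component of $X_T$ would be decorative; the fact that the solution space carries $L^2_TH^{\sigma}_{2d/(d-2)}$ is precisely the signal that it cannot be. The correct nonlinear estimate must place (at least) two of the three factors in the Strichartz norm, e.g.\ schematically $\|N(u)\|_{L^1_TH^s}\lesssim T^{\theta}\,\|u\|_{L^2_TH^{\sigma}_{2d/(d-2)}}^{2}\,\|u\|_{L^\infty_TH^s}$ with $\theta>0$ supplied by H\"older in time on the $L^2_T$ factors when $s>\tfrac\alpha2$; the exponent bookkeeping in the Leibniz/HLS splitting then closes because $\sigma$ derivatives on top of $L^{2d/(d-2)}_x$ buy the integrability that $H^s$ alone does not. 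Your final paragraph's caveat shows you sensed the delicacy, but as written the proof does not close; you need to rerun the trilinear estimate with the mixed-norm factors in place of $\|u\|_{H^s}$.
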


On the other hand, when $s = \frac{\al}{2}$, by using radial Strichartz estimate, the local and small data global well-posedness to \eqref{main eqn} are proven under the radial assumption of $\phi$ as follows.
\begin{prop} [Theorem 5.2 in \cite{chho}] \label{cri-cauchy}
Let $\frac{2d}{2d-1} \le \al \le 2$ and $\phi \in H_{rad}^\frac{\alpha}2$. then there exists a positive time $T$ such that \eqref{main eqn}
has a unique solution $u \in C([-T,T]; H^\frac\al2) \cap L^3_TH_r^\frac{\alpha}2$, $r = \frac{2n}{n-\frac{2\alpha}3}$. If $\|\phi\|_{\dot H^\frac\al2}$ is sufficiently small, then \eqref{main eqn} is globally well-posed.
\end{prop}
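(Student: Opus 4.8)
The plan is to solve the integral equation \eqref{integral} by a contraction-mapping argument for the map
$$
\Phi(u)(t)=U(t)\phi-i\mu\int_0^t U(t-t')\big((|\cdot|^{-2\al}*|u(t')|^2)u(t')\big)\,dt'
$$
in a Strichartz-type space adapted to the radially improved Strichartz estimates. Two features separate this from the subcritical Proposition~\ref{subcri-cauchy}. First, the generic Strichartz inequality of Lemma~\ref{str} incurs a derivative loss, which is fatal at the critical regularity $s=\tfrac\al2$; one must instead invoke the loss-free \emph{radial} Strichartz estimates of \cite{cholee,guwa}, available precisely for $\tfrac{2d}{2d-1}\le\al\le2$, which for radial $g$ give $\|U(t)g\|_{L^3_tL^r}\lesssim\|g\|_{L^2}$ with $r=\tfrac{2d}{d-2\al/3}$ and no derivative at all. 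Second, because the problem is energy-critical the estimate of the critical Strichartz norm leaves no subcritical power of $T$ to absorb the size of the datum; the standard remedy is to note that the global-in-time radial Strichartz bound plus dominated convergence force $\|U(t)\phi\|_{L^3_T\dot H^{\al/2}_r}\to0$ as $T\to0$, and to run the fixed point on a ball on which this critical Strichartz norm is small. For the global assertion the same norm is $\lesssim\|\phi\|_{\dot H^{\al/2}}$ on all of $\RR$, so smallness of the datum plays the same role.

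I would work in $X_T=C([-T,T];H^{\al/2})\cap L^3_TH^{\al/2}_r$ with $\|u\|_{X_T}=\|u\|_{L^\infty_TH^{\al/2}}+\|u\|_{L^3_TH^{\al/2}_r}$. For the linear term, $U(t)$ is unitary on $H^{\al/2}$, and the radial Strichartz estimate applied to $\phi$ and to $|\nabla|^{\al/2}\phi$ (both radial with $\phi$) gives $\|U(t)\phi\|_{X_T}\lesssim\|\phi\|_{H^{\al/2}}$ globally in time, with the $L^3_T$-component tending to $0$ as $T\to0$. The heart of the matter is the trilinear estimate for the Duhamel term. Using the inhomogeneous Strichartz estimates $\|\int_0^t U(t-t')G\,dt'\|_{X_T}\lesssim\|G\|_{L^1_TL^2}+\||\nabla|^{\al/2}G\|_{L^1_TL^2}$ (legitimate by the $TT^*$ method and the Christ--Kiselev lemma, as $3>2$), matters reduce to bounding $\|F(u)\|_{L^1_TL^2}$ and $\||\nabla|^{\al/2}F(u)\|_{L^1_TL^2}$. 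One distributes $|\nabla|^{\al/2}$ by the Kato--Ponce fractional Leibniz rule, commutes it through the convolution, and puts it either onto $|u|^2$ (a second Leibniz step expressing $|\nabla|^{\al/2}(|u|^2)$ through $u$ and $|\nabla|^{\al/2}u$) or, when $\tfrac{5\al}2<d$, directly onto the kernel (producing the Riesz potential $|x|^{-5\al/2}*|u|^2$); one then estimates the convolution by the Hardy--Littlewood--Sobolev inequality for $|x|^{-2\al}$ (valid for $0<2\al<d$), the undifferentiated copies of $u$ by the Sobolev embedding $H^{\al/2}_r\hookrightarrow L^\rho$, the differentiated copy by $|\nabla|^{\al/2}u\in L^r$, and closes with H\"older in time (three factors in $L^3_T$ give $L^1_T$). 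This yields
$$
\Big\|\int_0^t U(t-t')F(u)\,dt'\Big\|_{X_T}\lesssim\|u\|_{L^3_TH^{\al/2}_r}^{3},
$$
and the same computation applied to the difference $F(u)-F(v)$, written as a sum of trilinear terms, gives the matching Lipschitz bound for $\Phi(u)-\Phi(v)$.

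To close, fix $\eta>0$ small (to be chosen against $\|\phi\|_{H^{\al/2}}$ and the implicit constants) and then $T$ so small that $\|U(t)\phi\|_{L^3_TH^{\al/2}_r}\le\eta$. On the ball $\{u\in X_T:\|u\|_{L^3_TH^{\al/2}_r}\le2\eta,\ \|u\|_{L^\infty_TH^{\al/2}}\le2\|\phi\|_{H^{\al/2}}\}$ the trilinear estimate shows $\Phi$ maps the ball into itself (the critical part gains a factor $\lesssim\eta^2$, and the $L^\infty_TH^{\al/2}$ part is $\|\phi\|_{H^{\al/2}}$ plus a term of size $\lesssim\eta^3$) and is a contraction there, so the Banach fixed point theorem produces the unique solution $u\in X_T$; its membership in $C([-T,T];H^{\al/2})$ follows from continuity of the Duhamel integral. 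When $\|\phi\|_{\dot H^{\al/2}}$ is sufficiently small one simply takes $T=\infty$, since then $\|U(t)\phi\|_{L^3_t\dot H^{\al/2}_r}\lesssim\|\phi\|_{\dot H^{\al/2}}$ is already of size $\eta$. I expect the trilinear estimate to be the main obstacle: fitting the idiosyncratic radially improved pair $(3,r)$ to the fractional Leibniz rule applied to the \emph{nonlocal} Hartree nonlinearity, and verifying that every H\"older, Hardy--Littlewood--Sobolev and Sobolev exponent generated along the way remains admissible — this is exactly where the hypotheses $\tfrac{2d}{2d-1}\le\al\le2$ and $0<2\al<d$ are spent.
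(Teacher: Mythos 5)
The paper offers no proof of this proposition: it is imported verbatim as Theorem 5.2 of \cite{chho}, with only the remark that radial Strichartz estimates are the key ingredient. Your sketch --- contraction in $C_TH^{\al/2}\cap L^3_TH^{\al/2}_r$ using the loss-free radial Strichartz pair $(3,\tfrac{2d}{d-2\al/3})$ (whose admissibility is exactly the hypothesis $\al\ge\tfrac{2d}{2d-1}$), smallness of the free evolution's critical norm on short intervals, and a fractional Leibniz/Hardy--Littlewood--Sobolev trilinear estimate --- is precisely the standard argument behind that citation and is consistent with how the paper uses the result.
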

\noindent
Recently the author and collaborators of \cite{chs} obtained global well-posedness for $\frac{2d}{2d-1} < \al < 2$ without smallness when $\mu > 0$ and with $\|\phi\|_{\dot H^\frac\al2} < \|W_\al\|_{\dot H^\frac\al2}$ when $\mu < 0$, where $W_\al$ is a steady state solution of \eqref{main eqn}. Also See \cite{gswz} for power type. In \cite{hs} a power type case was treated in some critical regularity without radial assumption. When $\al =2$, the equation is much easier to handle, so there exist numerous well-posed and ill-posedness results (see \cite{caz, mxz1, Haoz, tsu, Nak, mxz2, ccl, mxz}).

In this paper we focus on supercritical case ($s < \frac\al2$). Many dispersive equations are known to be ill-posed in supercritical regime (see \cite{alca, buget, buget1, car, chcot}). For fractional Schr\"odinger equation, we also observe some negative results. One can readily show the following with a slight modification of illposedness in \cite{cgy, hele}. So we omit the proof.
\begin{prop}
If $s < \al(1 - \frac{\al}{2})$ and the flow map $\phi \mapsto u$ exists in a small neighborhood of the origin as a map from $H^s(\mathbb R^d)$ to $C([-T, T]; H^s)$, then it fails to be $C^{\,3}$ at the origin.
\end{prop}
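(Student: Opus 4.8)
The plan is the classical ``no third derivative'' argument of Bourgain and of Christ--Colliander--Tao: $C^{\,3}$-smoothness of the solution map at the origin forces a trilinear estimate on the first Duhamel iterate, and this estimate fails for a single high-frequency wave packet exactly when $s<\al(1-\tfrac\al2)$.

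First I would extract the trilinear estimate. Suppose, for contradiction, that the flow map $\Phi\colon\phi\mapsto u$ is $C^{\,3}$ from a ball $B_\delta(0)\subset H^s$ into $C([-T,T];H^s)$; since $\Phi(0)=0$, for each fixed $\psi\in H^s$ and small $\ep$ the curve $\ep\mapsto u^\ep:=\Phi(\ep\psi)$ is $C^{\,3}$ into $C([-T,T];H^s)$. Writing $u^\ep=\ep A_1+\ep^2 A_2+\ep^3 A_3+o(\ep^3)$, inserting this into \eqref{integral}, and matching powers of $\ep$, one gets $A_1=U(t)\psi$, then $A_2=0$ (the nonlinearity is homogeneous of degree three), and
\[
A_3(t)=-i\mu\int_0^t U(t-t')\big((|\cdot|^{-2\al}*|U(t')\psi|^2)\,U(t')\psi\big)\,dt'=:u_3[\psi](t).
\]
Since $A_3=\tfrac1{3!}D^3\Phi(0)[\psi,\psi,\psi]$ with $D^3\Phi(0)$ a bounded trilinear map $(H^s)^3\to C([-T,T];H^s)$, and since $u_3[\lambda\psi]=\lambda^3 u_3[\psi]$ for $\lambda>0$ (because $U(t')$ is linear and the nonlinearity is cubic), one gets a constant $C$ with
\[
\sup_{|t|\le T}\big\|u_3[\psi](t)\big\|_{H^s}\le C\,\|\psi\|_{H^s}^{\,3}\qquad\text{for every }\psi\in H^s .
\]

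Next I would disprove this with a frequency packet. Fix the first coordinate vector $e_1$, a large $N$, and a scale $r=r_N$ with $1\ll r_N\ll N$; put $Q_N=Ne_1+[0,r_N]^d$ and $\widehat{\phi_N}=N^{-s}r_N^{-d/2}\mathbf{1}_{Q_N}$, so that $\|\phi_N\|_{H^s}\sim1$. On the Fourier side, with $\xi_3=\xi-\xi_1+\xi_2$ and $\Psi=|\xi|^\al-|\xi_1|^\al+|\xi_2|^\al-|\xi_3|^\al$, one has for a nonzero constant $c$
\[
\widehat{u_3[\phi_N](t)}(\xi)=c\,\mu\, e^{-it|\xi|^\al}\!\!\iint|\xi_1-\xi_2|^{2\al-d}\,\widehat{\phi_N}(\xi_1)\,\overline{\widehat{\phi_N}(\xi_2)}\,\widehat{\phi_N}(\xi_3)\Big(\int_0^t e^{it'\Psi}\,dt'\Big)d\xi_1\,d\xi_2 ,
\]
where $|\cdot|^{-2\al}$ is read through its Fourier multiplier $c_d|\eta|^{2\al-d}$. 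Expanding $|\cdot|^\al$ to second order about $Ne_1$ — legitimate since $r_N\ll N$ keeps $Q_N$ off the origin — the displacement-linear part of $\Psi$ cancels by the relation $\xi=\xi_1-\xi_2+\xi_3$, leaving $|\Psi|\lesssim N^{\al-2}r_N^2$ on $Q_N$. Fixing $t=t_\circ:=\min(T,1)$, the condition $t_\circ N^{\al-2}r_N^2\lesssim1$ — valid for large $N$ as soon as $r_N\lesssim N^{(2-\al)/2}$ — gives $\mathrm{Re}\big(\int_0^{t_\circ}e^{it'\Psi}\,dt'\big)\gtrsim t_\circ$, and since $\widehat{\phi_N}\ge0$ and $|\xi_1-\xi_2|^{2\al-d}>0$ no cancellation occurs: for $\xi=Ne_1+\zeta$ with $\zeta\in[r_N/4,3r_N/4]^d$ (a set of measure $\sim r_N^d$ on which $|\xi|\sim N$),
\[
\big|\widehat{u_3[\phi_N](t_\circ)}(\xi)\big|\gtrsim|\mu|\,t_\circ\,(N^{-s}r_N^{-d/2})^3\!\!\iint_{\xi_1,\xi_2,\xi_3\in Q_N}\!\!\!|\xi_1-\xi_2|^{2\al-d}\,d\xi_1d\xi_2\gtrsim|\mu|\,t_\circ\,N^{-3s}r_N^{\,2\al-d/2},
\]
using that the constraining $(\xi_1,\xi_2)$-set has measure $\sim r_N^{2d}$ and $\int_{|\eta|\lesssim r_N}|\eta|^{2\al-d}\,d\eta\sim r_N^{2\al}$ (finite because $\al>0$). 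Taking $H^s$-norms over this set gives $\|u_3[\phi_N](t_\circ)\|_{H^s}\gtrsim|\mu|\,t_\circ\,N^{-2s}r_N^{\,2\al}$.

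Finally I would optimize over $r_N=N^\beta$. The phase bound requires $\beta<(2-\al)/2$, while $N^{-2s}r_N^{2\al}=N^{-2s+2\al\beta}\to\infty$ requires $\beta>s/\al$; such a $\beta$ exists precisely when $s/\al<(2-\al)/2$, i.e. $s<\al(1-\tfrac\al2)$ (and if $s\le0$ any $\beta\in(0,(2-\al)/2)$ works). Then $\|u_3[\phi_N](t_\circ)\|_{H^s}\to\infty$ while $\|\phi_N\|_{H^s}^3\sim1$, contradicting the trilinear estimate, so $\Phi$ cannot be $C^{\,3}$ at the origin. The step needing genuine care is the first one: since $F$ is not holomorphic, ``third derivative'' means third real Fréchet derivative and $u_3$ is the degree-three homogeneous part of the formal expansion of $\Phi$ in the datum, and the matching of powers of $\ep$ must be carried out in a space where the cubic term makes sense (it does for the Schwartz data $\phi_N$, which by homogeneity is all one ever needs). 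The rest is the resonance computation above, which is the one in \cite{cgy,hele} adapted to the dispersion relation $|\xi|^\al$ and the Hartree multiplier $|\eta|^{2\al-d}$.
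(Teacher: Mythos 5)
The paper omits the proof of this proposition, deferring to \cite{cgy,hele}; your argument is precisely the Bourgain/Christ--Colliander--Tao failure-of-$C^3$ argument used in those references, correctly adapted to the symbol $|\xi|^\alpha$ (whose Hessian at frequency $N$ has size $N^{\alpha-2}$, giving the phase constraint $r_N\lesssim N^{(2-\alpha)/2}$) and to the Hartree multiplier $|\eta|^{2\alpha-d}$ (which contributes the factor $r_N^{2\alpha}$), and the exponent count checks out: $\|u_3[\phi_N]\|_{H^s}\gtrsim N^{-2s}r_N^{2\alpha}$ with $\|\phi_N\|_{H^s}\sim 1$ diverges for some admissible $r_N=N^{\beta}$ exactly when $s<\alpha(1-\tfrac{\alpha}{2})$. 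The one hypothesis you use implicitly is $d>2\alpha$, needed so that $|x|^{-2\alpha}$ is locally integrable with Fourier transform a positive multiple of $|\eta|^{2\alpha-d}$; this is a standing assumption of the paper as well (it is what makes the condition $0<\ep_1<d-2\alpha$ in the convolution bound nonvacuous), so it is not a gap in your argument.
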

\noindent
Nonetheless, using probabilistic arguments, Bourgain \cite{bour2}, Burq-Tzvetkov \cite{butz1, butz2}, Colliander-Oh \cite{cooh} and B\'enyi-Oh-Pocovinicu \cite{beohpo1, beohpo2} established positive results on subsets of $H^s$ for the supercritical case (see also \cite{thom, deng, napasta, bobu,OhPo,Po,LuMe}). Especially, in \cite{beohpo1,beohpo2}, the authors introduced a randomization for functions in the usual Sobolev space on $\mathbb{R}^d$.

Many of these works are on the dispersive equation with power type nonlinearity. So we concern the Cauchy problem with random initial data of the equation with Hartree nonlinearity. Because of nonlocal nonlinearity, the problem is more complicated. More precisely, we cannot apply H\"older inequality and bilinear Strichartz estimates (Lemma \ref{xsbbi-1} and Lemma \ref{xsbbi-2}) directly. In order to overcome the difficulty, we decompose functions with respect to frequency as in \cite{ccl}. Now we state our main theorem.
\begin{thm}\label{main thm}
Let $\max(\frac {2\al - 1}{4\al -3}\cdot\frac{\al}{2}, \frac{1}{2}) < s < \frac{\al}{2}$ and $\phi \in H^s$. Consider randomization $\phi^\omega$ defined in \eqref{varcd} with a probability space $(\Omega, \mathcal{F}, P)$ satisfying the condition \eqref{prbcd}. Then \eqref{main eqn} is almost surely locally wellposed in the sense that there exists $C, c, \gamma$ and $\sigma = \frac{\al}{2}+$ such that for each $T \ll 1$, there exists a set $\Omega_T \subset \Omega$ with the following properties:
\begin{enumerate}
\item $P(\Omega \backslash \Omega_T) \leq C \exp\big(-\frac{c}{T^\gamma\|\phi\|_{H^s}^2}\big)$
\item For each $\omega \in \Omega_T$, there exists a unique solution $u \in C([0,T];H^s)$ to \eqref{main eqn} with intial data $\phi^\omega$.
\item Duhamel part of the solution is smoother than initial data, i.e
\[u - U(t)\phi^\omega \in C([0,T];H^\sigma).\]
\end{enumerate}
\end{thm}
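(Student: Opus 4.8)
The plan is to run a contraction argument not for $u$ itself but for the Duhamel remainder. Set $z:=U(t)\phi^\omega$ and $v:=u-z$; by \eqref{integral}, $v$ must solve
\[
v(t)\;=\;-i\mu\int_0^t U(t-t')\big((|\cdot|^{-2\al}*|z+v|^2)(z+v)\big)(t')\,dt'\;=:\;\Gamma v(t),
\]
and one looks for a fixed point of $\Gamma$ in a ball of a resolution space $Y^\sigma_T$ combining $C([0,T];H^\sigma)$ with the Strichartz norms of Lemma \ref{str} and the $X^{s,b}$-type norms underlying the bilinear estimates of Lemmas \ref{xsbbi-1}--\ref{xsbbi-2}, all taken at regularity $\sigma=\tfrac\al2+$. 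The key structural point is that $\sigma>\tfrac\al2$ is \emph{sub}critical, so every nonlinear estimate below will carry a positive power $T^\theta$. Once a fixed point $v\in C([0,T];H^\sigma)$ is produced, $u=z+v$: since randomization preserves $H^s$ (so $\phi^\omega\in H^s$ and $z\in C([0,T];H^s)$ almost surely) and $H^\sigma\hookrightarrow H^s$, this gives $u\in C([0,T];H^s)$, which is conclusion (2); uniqueness is built into the contraction; and $u-U(t)\phi^\omega=v\in C([0,T];H^\sigma)$ is conclusion (3).

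\textbf{Probabilistic step.} Define $\Omega_T$ by a finite list of conditions $\|z\|_{L^{q_i}_TL^{r_i}_x}\le\rho$ (plus a handful of $X^{s,b}$-type norms), one for each exponent pair needed in the multilinear estimates, with $\rho=\rho(T)$ a suitable positive power of $T$. To estimate their probabilities, expand $\phi^\omega=\sum_{n\in\ZZ^d}g_n(\omega)\,\psi(D-n)\phi$ as in \eqref{varcd}. Because $\psi(D-n)\phi$ is frequency-localized at unit scale, Bernstein's inequality and the $L^2$-unitarity of $U(t)$ (the $2-\al$ derivative loss of Lemma \ref{str} being harmless at unit frequency, and Lemma \ref{str} itself available at scale one when a genuine Strichartz gain is needed) give $\|U(t)\psi(D-n)\phi\|_{L^q_TL^r_x}\lesssim T^{1/q}\|\psi(D-n)\phi\|_{L^2_x}$; hence the $\ell^2_n$ square-function of these norms is $\lesssim T^{1/q}\|\phi\|_{L^2_x}\le T^{1/q}\|\phi\|_{H^s}$. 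A Khintchine / large-deviation bound under hypothesis \eqref{prbcd}, exactly as in \cite{beohpo1,beohpo2}, then yields $P\big(\|z\|_{L^q_TL^r_x}>\rho\big)\le C\exp\!\big(-c\rho^2/(T^{2/q}\|\phi\|_{H^s}^2)\big)$; choosing $\rho$ a small enough power of $T$ and summing the finitely many conditions produces $P(\Omega\setminus\Omega_T)\le C\exp\!\big(-c/(T^\gamma\|\phi\|_{H^s}^2)\big)$ for a suitable $\gamma>0$, which is conclusion (1).

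\textbf{Deterministic multilinear step.} Expanding the nonlinearity gives eight trilinear terms $(|\cdot|^{-2\al}*(w_1\cj{w_2}))w_3$ with $w_i\in\{z,v\}$; by dual Strichartz and the Christ--Kiselev lemma it suffices to bound each such term in the appropriate dual space-time norm at regularity $\sigma$. The nonlocal factor is the obstruction: H\"older and the bilinear Strichartz estimates cannot be applied to the three factors directly, so, following \cite{ccl}, I would Littlewood--Paley decompose each $w_i$, use that convolution with $|\cdot|^{-2\al}$ acts on $\widehat{w_1\cj{w_2}}$ as the multiplier $|\xi|^{2\al-d}$, and run the estimate case by case over the frequency configurations (high-high-low, high-low-high, and so on), applying Lemmas \ref{xsbbi-1}--\ref{xsbbi-2} to the inner pair $w_1\cj{w_2}$ when its two frequencies are separated and plain Bernstein/Strichartz otherwise, then summing the frequency parameters. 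Terms with one, two, or three factors equal to $z$ are controlled using only the space-time norms defining $\Omega_T$ (never the $H^\sigma$-norm of $z$, which is not controlled for $s<\tfrac\al2$), while the all-$v$ term is the standard subcritical estimate in $Y^\sigma_T$. Each term then closes with a bound $\lesssim T^\theta\big(\rho+\|v\|_{Y^\sigma_T}\big)^3$ with $\theta>0$, so for $T\ll1$ and $\omega\in\Omega_T$ the map $\Gamma$ is a contraction on a ball of radius comparable to $\rho$ in $Y^\sigma_T$; its fixed point is the desired $v$.

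\textbf{Main obstacle.} The heart of the argument is the multilinear step, and within it the terms carrying two or three copies of $z$. There $z$ has only Sobolev regularity $s<\tfrac\al2$, so the output of the Hartree interaction must nevertheless land in $H^\sigma$ with $\sigma$ strictly above $\tfrac\al2$ — a pure gain-of-regularity phenomenon, the gain being extracted from the mapping properties of the $|\xi|^{2\al-d}$ multiplier, the smoothing of the Duhamel operator, and the bilinear Strichartz gain, all harvested through the frequency decomposition. Balancing these gains against the derivative deficit $\tfrac\al2-s$ in the worst frequency configuration is precisely what forces the hypothesis $s>\max\!\big(\tfrac{2\al-1}{4\al-3}\cdot\tfrac\al2,\tfrac12\big)$; carrying out this bookkeeping so that it stays valid, and essentially sharp, uniformly for $1<\al\le2$ and $d\ge3$ is the technical crux.
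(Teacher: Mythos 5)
Your proposal follows essentially the same route as the paper: the reduction to a contraction for $v=u-U(t)\phi^\omega$ at regularity $\sigma=\frac\al2+$ in an $X^{\sigma,b}$-type space, the definition of $\Omega_T$ via finitely many probabilistic Strichartz bounds on $z$ as in \cite{beohpo1,beohpo2}, and the case-by-case Littlewood--Paley analysis of the trilinear Hartree terms treating $|\cdot|^{-2\al}*$ as the multiplier $|\xi|^{2\al-d}$ and exploiting the bilinear estimates of Lemmas \ref{xsbbi-1}--\ref{xsbbi-2}, following \cite{ccl}. The only differences are cosmetic (you count eight trilinear terms where the paper groups conjugates into six, and you invoke dual Strichartz/Christ--Kiselev where the paper uses the $X^{s,b}$ inhomogeneous estimate of Lemma \ref{xsb} and duality), so the outline is a faithful blueprint of the paper's argument.
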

The rest of the paper is organized as follows: In Section 2, we briefly review randomization adapted to the Wiener decomposition in \cite{beohpo1,beohpo2}. And in section 3, we introduce Bourgain space $X^{s,b}$ and show bilinear Strichartz esimtates. Lastly in section 4, we shall prove Theorem \ref{main eqn}.

\section{Randomization}
We briefly review randomization adapted to the Wiener decomposition in \cite{beohpo1,beohpo2}.
Let $\psi \in \mathcal{S}$ be a function satisfying
\[\supp \psi \subset [-1,1]^d \text{ and } \sum_{n \in \mathbb{Z}^d}\psi(\xi - n) = 1.\]
And we define pseudo-differential operator $\psi(D-n)$ as a Fourier multiplier
\[\psi(D-n)u(x) = (2\pi)^{-d}\int_{\mathbb{R}^d} e^{i x \cdot \xi} \psi(\xi - n) \widehat{u}(\xi)d\xi.\]
Then given a function $f \in L^2(\mathbb{R}^d)$, we have
\[f = \sum_{n \in \mathbb{Z}^d} \psi(D-n)f.\]

Let $\{g_n\}_{n \in \mathbb{Z}^d}$ be a sequence of independent mean zero complex-valued random variables on a probability space $(\Omega, \mathcal{F}, \mathcal{P})$, where the real and imaginary parts of $g_n$ are independent and endowed with probability distribution $\mu^1_n$ and $\mu^2_n$. We assume there exists $c>0$ such that
\begin{align}\label{prbcd}
\Big|\int_{\mathbb{R}} e^{\gamma x}d\mu^j_n\Big| \leq e^{c\gamma^2},
\end{align}
for all $\gamma \in \mathbb{R}, n \in \mathbb{Z}^d, j =1,2$. Thereafter we define Wiener randomization of $f$ by
\begin{align}\label{varcd}
f^\omega := \sum_{n \in \mathbb{Z}^d} g_n(\omega) \psi(D-n)f.
\end{align}

We recall several well-known useful probabilistic estimates.
\begin{lem}[Lemma 3.1 in \cite{butz1}]\label{prbest-1}
For given $\{c_n\} \in \ell^2(\mathbb{Z}^d)$ and $p \geq 2$, there exists $C>0$ such that
\[\|\sum_{n \in \mathbb{Z}^d} g_n(\omega)c_n\|_{L^p(\Omega)} \leq C\sqrt{q} \|c_n\|_{l^2_n(\mathbb{Z}^d)}.\]
\end{lem}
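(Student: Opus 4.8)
The final statement is the Khintchine-type moment inequality for the randomized series $S := \sum_{n \in \mathbb{Z}^d} g_n(\omega) c_n$; note that the exponent on the right-hand side should read $\sqrt{p}$ (the $q$ is a typographical slip for $p$). The plan is to first establish a sub-Gaussian bound on the moment generating function of the real and imaginary parts of $S$, using only independence of the $g_n$ together with hypothesis \eqref{prbcd}, and then to convert this exponential bound into the claimed $L^p(\Omega)$ moment bound by a standard Chernoff-plus-layer-cake argument. Throughout, one works with finite sums and passes to the limit at the very end via Fatou's lemma, which is legitimate because $\{c_n\} \in \ell^2$ forces the series to converge in $L^2(\Omega)$.

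For the first step, write $c_n = \alpha_n + i\beta_n$ and $g_n = a_n + i b_n$, where $a_n, b_n$ are real, mean zero, independent across $n$ and in the real/imaginary splitting, with laws $\mu^1_n, \mu^2_n$. Then $\mathrm{Re}\,S = \sum_n (\alpha_n a_n - \beta_n b_n)$ and $\mathrm{Im}\,S = \sum_n (\beta_n a_n + \alpha_n b_n)$, and by independence the moment generating function factorizes:
\[
\mathbb{E}\big[e^{\gamma\, \mathrm{Re}\,S}\big] = \prod_n \mathbb{E}\big[e^{\gamma \alpha_n a_n}\big]\,\mathbb{E}\big[e^{-\gamma\beta_n b_n}\big] \le \prod_n e^{c\gamma^2\alpha_n^2}\, e^{c\gamma^2\beta_n^2} = e^{\,c\gamma^2 \|c_n\|_{\ell^2_n}^2},
\]
where the middle inequality is precisely \eqref{prbcd} applied with parameters $\gamma\alpha_n$ and $-\gamma\beta_n$. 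The identical bound holds for $\mathrm{Im}\,S$, so both real and imaginary parts are sub-Gaussian with variance proxy $\sigma^2 = \|c_n\|_{\ell^2_n}^2$ and constant $c$.

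For the second step, let $X$ be any real random variable satisfying $\mathbb{E}[e^{\gamma X}] \le e^{c\gamma^2\sigma^2}$ for all $\gamma \in \mathbb{R}$. Markov's inequality followed by optimization in $\gamma$ yields the tail bound $P(|X| > \lambda) \le 2\exp(-\lambda^2/(4c\sigma^2))$, and the layer-cake formula gives
\[
\mathbb{E}[|X|^p] = \int_0^\infty p\lambda^{p-1} P(|X| > \lambda)\,d\lambda \le 2p\int_0^\infty \lambda^{p-1} e^{-\lambda^2/(4c\sigma^2)}\,d\lambda = p\,(4c\sigma^2)^{p/2}\,\Gamma\big(\tfrac{p}{2}\big).
\]
Taking $p$-th roots and using Stirling's formula, which gives $\Gamma(p/2)^{1/p} \lesssim \sqrt{p}$, we obtain $\|X\|_{L^p} \lesssim \sqrt{p}\,\sigma$ with an implicit constant depending only on $c$. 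Applying this to $\mathrm{Re}\,S$ and $\mathrm{Im}\,S$ and combining through $|S| \le |\mathrm{Re}\,S| + |\mathrm{Im}\,S|$ completes the argument. The only genuinely delicate point is the bookkeeping of the power of $p$: one must check that the Gamma factor contributes exactly $\sqrt{p}$ after the root is taken, so that the resulting constant $C$ is independent of both $p$ and the sequence $\{c_n\}$, as asserted.
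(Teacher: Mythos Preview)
Your proof is correct and follows the standard route: sub-Gaussian moment generating function bound via independence and \eqref{prbcd}, then Chernoff tail bound, then layer-cake and Stirling. Note, however, that the paper does not actually give a proof of this lemma; it is simply quoted as Lemma~3.1 of Burq--Tzvetkov \cite{butz1}, so there is no in-paper argument to compare against. The original proof in \cite{butz1} proceeds exactly as you do, so your write-up matches the intended source. Your observation that the $q$ in the statement is a typographical slip for $p$ is also correct.
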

\begin{lem}[Lemma 2.2 in \cite{beohpo1}]\label{prbest-2}
Given $f \in H^s(\mathbb{R}^d)$, we have for any $\lambda >0$,
\[P\Big(\|f^\omega\|_{H^s(\mathbb{R}^d)} > \lambda \Big) \leq Ce^{-c\lambda^2\|f\|^{-2}_{H^s}}.\]
\end{lem}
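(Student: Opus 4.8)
The plan is to establish a uniform moment bound $\big\|\,\|f^\omega\|_{H^s}\big\|_{L^p(\Omega)} \leq C\sqrt{p}\,\|f\|_{H^s}$ for all $p \geq 2$, and then convert it into the stated Gaussian tail by Chebyshev's inequality followed by an optimization in $p$. Write $J^s = \langle D\rangle^s$ for the Bessel potential with Fourier multiplier $\langle\xi\rangle^s = (1+|\xi|^2)^{s/2}$, so that $\|f^\omega\|_{H^s} = \|J^s f^\omega\|_{L^2_x}$ and, by \eqref{varcd},
\[
J^s f^\omega = \sum_{n\in\mathbb{Z}^d} g_n(\omega)\, J^s\psi(D-n)f.
\]

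First I would interchange the spatial $L^2_x$ norm and the probabilistic $L^p(\Omega)$ norm, which is exactly where $p \geq 2$ is used. By Minkowski's integral inequality,
\[
\big\|\,\|J^s f^\omega\|_{L^2_x}\big\|_{L^p(\Omega)} \leq \big\|\,\|J^s f^\omega\|_{L^p(\Omega)}\big\|_{L^2_x}.
\]
For each fixed $x$, the quantity $J^s f^\omega(x) = \sum_n g_n(\omega)\,[J^s\psi(D-n)f](x)$ is a random series of the type controlled by Lemma \ref{prbest-1}, with coefficients $c_n = [J^s\psi(D-n)f](x)$. Applying that lemma pointwise in $x$ and then taking the $L^2_x$ norm (which commutes with the $\ell^2_n$ square function) yields
\[
\big\|\,\|f^\omega\|_{H^s}\big\|_{L^p(\Omega)} \leq C\sqrt{p}\,\Big(\sum_n \|J^s\psi(D-n)f\|_{L^2_x}^2\Big)^{1/2}.
\]

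The next ingredient is an almost-orthogonality estimate bounding the right-hand square function by $\|f\|_{H^s}$. By Plancherel,
\[
\sum_n \|J^s\psi(D-n)f\|_{L^2_x}^2 = \int_{\mathbb{R}^d}\langle\xi\rangle^{2s}|\widehat{f}(\xi)|^2 \Big(\sum_n |\psi(\xi-n)|^2\Big)\,d\xi,
\]
and since $\psi$ is supported in $[-1,1]^d$ and bounded, the multiplicity factor $\sum_n|\psi(\xi-n)|^2$ is bounded uniformly in $\xi$ (at most $2^d$ terms are nonzero at any point). Hence the right-hand side is $\lesssim \|f\|_{H^s}^2$, which gives the claimed moment bound $\big\|\,\|f^\omega\|_{H^s}\big\|_{L^p(\Omega)} \leq C\sqrt{p}\,\|f\|_{H^s}$.

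Finally I would optimize in $p$. Chebyshev's inequality gives, for every $p \geq 2$,
\[
P\big(\|f^\omega\|_{H^s} > \lambda\big) \leq \Big(\frac{C\sqrt{p}\,\|f\|_{H^s}}{\lambda}\Big)^{p}.
\]
Choosing $p = \big(\lambda/(C e^{1/2}\|f\|_{H^s})\big)^2$ whenever this exceeds $2$ makes the bracket equal to $e^{-1/2}$, producing the bound $\exp(-c\lambda^2\|f\|_{H^s}^{-2})$; for the complementary range of small $\lambda$ the estimate is trivial after enlarging $C$, since $P \leq 1$. The genuinely substantive points are the Minkowski interchange and the almost-orthogonality bound relying on the finite overlap of the cubes $[-1,1]^d + n$; the passage from moments to the tail is a routine large-deviation optimization, so I do not expect a serious obstacle.
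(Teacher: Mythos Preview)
Your argument is correct and is exactly the standard proof of this lemma; the paper does not supply its own proof but simply cites \cite{beohpo1}, where the argument proceeds by the same Minkowski interchange, application of Lemma~\ref{prbest-1} pointwise, finite-overlap almost-orthogonality, and Chebyshev-plus-optimization that you outline.
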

\begin{lem}[Lemma 2.3 in \cite{beohpo1}]\label{prbest-3}
Given $f \in L^2(\mathbb{R}^d)$ and finite $p \geq 2$, there exists $C,c >0$ such that for any $\lambda > 0$,
\[P\Big(\|f^\omega\|_{L^p(\mathbb{R}^d)} > \lambda \Big) \leq Ce^{-c\lambda^2\|f\|^{-2}_{L^2}}.\]
In particular, $f^\omega$ is in $L^p(\mathbb{R}^d)$ almost surely.
\end{lem}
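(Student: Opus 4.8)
The plan is to prove the almost-sure local well-posedness of \eqref{main eqn} by a probabilistic contraction mapping argument in a suitable Bourgain space. The central idea, following \cite{beohpo1,beohpo2}, is to split the solution into its linear and nonlinear parts: I would write $u = U(t)\phi^\omega + v$, where the free evolution $U(t)\phi^\omega$ carries only the (rough) $H^s$ regularity of the random data, while the Duhamel remainder $v$ is expected to live in the smoother space $C([0,T];H^\sigma)$ with $\sigma = \frac{\al}{2}+$. Substituting into \eqref{integral}, the fixed-point equation for $v$ becomes
\begin{equation*}
v = -i\mu\int_0^t U(t-t')\big( (|\cdot|^{-2\al}*|U(t')\phi^\omega + v|^2)(U(t')\phi^\omega + v)\big)\,dt',
\end{equation*}
and the goal is to run a contraction for $v$ in a ball of $X^{\sigma,b}$ (restricted to $[0,T]$) for suitable $b > \frac12$, treating the random free term as a fixed, favorably-distributed source.

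First I would fix a large deviation set: using Lemma \ref{prbest-2} and Lemma \ref{prbest-3} together with the improved integrability and Strichartz properties that randomization provides, I would define $\Omega_T$ to be the event on which the relevant space-time norms of $U(t)\phi^\omega$ — in particular the norms appearing in the trilinear Hartree estimate below — are bounded by a constant multiple of $T^{\theta}\|\phi\|_{H^s}$ for some small power $\theta>0$. The exponential bound in property (1), namely $P(\Omega\setminus\Omega_T)\le C\exp(-c/(T^\gamma\|\phi\|_{H^s}^2))$, comes directly from the Gaussian-type tail in those lemmas once the threshold $\lambda$ is chosen to scale like $T^{-\gamma/2}$; the key gain is that the random free evolution obeys Strichartz-type bounds with no derivative loss and with a small positive power of $T$ out front, which is exactly what makes a supercritical $s<\frac{\al}{2}$ accessible.

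The main analytic input is a multilinear estimate for the Hartree nonlinearity in Bourgain spaces. The difficulty, flagged in the introduction, is that the nonlocal potential $|\cdot|^{-2\al}*|u|^2$ prevents a naive application of Hölder and the bilinear Strichartz estimates (Lemma \ref{xsbbi-1}, Lemma \ref{xsbbi-2}). To handle this I would decompose the three input factors by frequency as in \cite{ccl}, estimating each dyadic piece of the convolution separately and summing. The crucial structural point is that at least one factor in each trilinear interaction is the rough free term $U(t)\phi^\omega$, for which I may use the enhanced probabilistic Strichartz bounds rather than the lossy deterministic Lemma \ref{str}; the remaining factors, lying in $X^{\sigma,b}$, are estimated by the bilinear estimates of Section 3. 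Summing over frequency-localized pieces, I expect an estimate of the schematic form
\begin{equation*}
\Big\|\int_0^t U(t-t')F(\cdots)\,dt'\Big\|_{X^{\sigma,b}_T}\lesssim T^{\theta}\big(\|U(t)\phi^\omega\|_{\ast}+\|v\|_{X^{\sigma,b}_T}\big)^3,
\end{equation*}
where $\|\cdot\|_\ast$ is the probabilistically-controlled norm bounded on $\Omega_T$.

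With this trilinear estimate in hand, the contraction closes in the standard way: on $\Omega_T$ the source term is small like $T^{\theta}\|\phi\|_{H^s}$, so for $T\ll 1$ the map $v\mapsto -i\mu\int_0^t U(t-t')F(U(t')\phi^\omega+v)\,dt'$ maps a small ball of $X^{\sigma,b}_T$ into itself and is a contraction, yielding a unique fixed point $v$. This gives properties (2) and (3) at once: the embedding $X^{\sigma,b}_T\hookrightarrow C([0,T];H^\sigma)$ for $b>\frac12$ places $v=u-U(t)\phi^\omega$ in $C([0,T];H^\sigma)$, while $U(t)\phi^\omega\in C([0,T];H^s)$ gives $u\in C([0,T];H^s)$. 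The hardest part will be the multilinear estimate — specifically, closing the frequency decomposition so that the nonlocal kernel $|\cdot|^{-2\al}$ is absorbed while retaining the $\sigma>s$ regularity gain and a positive power of $T$ — and identifying the precise lower threshold $\max(\frac{2\al-1}{4\al-3}\cdot\frac{\al}{2},\frac12)$ on $s$, which should emerge from balancing the derivative budget in the worst-case high-low frequency interaction against the $\sigma=\frac{\al}{2}+$ target regularity.
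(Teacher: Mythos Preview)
Your proposal does not address the stated lemma at all. The statement in question is the probabilistic $L^p$ bound for the Wiener randomization $f^\omega$, namely the exponential tail estimate $P(\|f^\omega\|_{L^p}>\lambda)\le Ce^{-c\lambda^2\|f\|_{L^2}^{-2}}$. What you have written is instead an outline of the proof of the main well-posedness theorem (Theorem \ref{main thm}): the decomposition $u=U(t)\phi^\omega+v$, the contraction in $X^{\sigma,b}$, the multilinear Hartree estimate, and the identification of the regularity threshold. None of this is relevant to proving Lemma \ref{prbest-3}.

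In the paper this lemma is not proved but simply quoted from \cite{beohpo1}. If you were to supply a proof, the argument is short and quite different from anything in your proposal: one takes $q\ge p$, applies Minkowski to interchange $L^q(\Omega)$ and $L^p_x$, uses Lemma \ref{prbest-1} pointwise in $x$ to get $\|f^\omega\|_{L^q(\Omega;L^p_x)}\lesssim \sqrt{q}\,\|\psi(D-n)f\|_{L^p_x\ell^2_n}$, and then invokes the unit-scale Bernstein inequality $\|\psi(D-n)f\|_{L^p_x}\lesssim\|\psi(D-n)f\|_{L^2_x}$ (valid because each block has Fourier support in a cube of side $O(1)$) to bound the right-hand side by $\sqrt{q}\,\|f\|_{L^2}$. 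The exponential tail then follows from Chebyshev and optimizing over $q$. You should replace your write-up with this argument, or else simply cite \cite{beohpo1} as the paper does.
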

Exactly same arguments for Schr\"odinger equation in \cite{beohpo1,beohpo2} give probablistic Strichartz estimates for fractional Schr\"odinger equation. Actually the only property of linear propagator used in those papers is that $L^2$-norm of linear propagator is conserved in time (see Proposition 1.3 in \cite{beohpo1}).
\begin{prop}\label{prbStr-1}
Given $f \in L^2(\mathbb{R}^d)$, let $f^\omega$ be its randomization. Then, given $2 \leq q,r \leq \infty$, for all $T>0$ and $\lambda>0$ there exists $C,c > 0$ such that
\begin{align}
P(\|U(t)f^\omega\|_{L^q_tL^r_x([0,T] \times \mathbb{R}^d)} > \lambda) \leq C\exp\Big(-c\frac {\lambda^2}{T^{\frac 2q}\|f\|^2_{L^2}}\Big).
\end{align}
\end{prop}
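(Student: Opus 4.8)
The plan is to follow the by-now standard scheme of \cite{beohpo1,beohpo2}: first establish a moment bound of the form $\|U(t)f^\omega\|_{L^p(\Omega;L^q_tL^r_x([0,T]\times\RR^d))} \lesssim \sqrt p\, T^{1/q}\|f\|_{L^2}$ for all sufficiently large finite $p$, and then upgrade it to the stated exponential tail estimate by Chebyshev's inequality together with optimization in $p$. Throughout one may assume $q,r<\infty$, the endpoint cases being obtained by routine modifications (or by monotone limits).

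For the moment bound, fix $p\ge\max(q,r)$. By Minkowski's integral inequality (twice, using $q,r\le p$) we may bring the $\Omega$--integration to the inside,
\[\|U(t)f^\omega\|_{L^p_\omega L^q_t L^r_x}\le\|U(t)f^\omega\|_{L^q_t L^r_x L^p_\omega},\]
and then, for each fixed $(t,x)$, the random variable $U(t)f^\omega(x)=\sum_{n}g_n(\omega)\,U(t)\psi(D-n)f(x)$ is controlled by Lemma \ref{prbest-1}:
\[\|U(t)f^\omega(x)\|_{L^p_\omega}\lesssim\sqrt p\Big(\sum_{n\in\ZZ^d}|U(t)\psi(D-n)f(x)|^2\Big)^{1/2}.\]
Since $q,r\ge 2$, another application of Minkowski's inequality pulls the $\ell^2_n$--sum outside the $L^q_tL^r_x$--norm, so it remains to dominate $\big(\sum_n\|U(t)\psi(D-n)f\|_{L^q_t([0,T])L^r_x}^2\big)^{1/2}$ by $T^{1/q}\|f\|_{L^2}$.

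The key point---and the only place the propagator enters---is a unit-frequency-scale estimate with constant independent of $n$. Since $\widehat{\psi(D-n)f}$ is supported in the cube $n+[-1,1]^d$, of measure $2^d$, and $U(t)$ preserves this support while acting as an $L^2$--isometry, we get
\[\|U(t)\psi(D-n)f\|_{L^\infty_x}\lesssim\|\widehat{U(t)\psi(D-n)f}\|_{L^1_\xi}=\|\psi(\cdot-n)\widehat f\|_{L^1_\xi}\lesssim\|\psi(D-n)f\|_{L^2},\]
while $\|U(t)\psi(D-n)f\|_{L^2_x}=\|\psi(D-n)f\|_{L^2}$; interpolating in $x$ and integrating the ($t$--independent) constant over $[0,T]$ yields $\|U(t)\psi(D-n)f\|_{L^q_t([0,T])L^r_x}\lesssim T^{1/q}\|\psi(D-n)f\|_{L^2}$, uniformly in $n$. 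Note that this uses only $|e^{-it|\xi|^\al}|=1$, not Lemma \ref{str}: the randomization buys enough room that the crude Bernstein-type bound above suffices, which is precisely the observation flagged before the statement. Summing in $n$ and invoking the almost-orthogonality $\sum_n\|\psi(D-n)f\|_{L^2}^2\sim\|f\|_{L^2}^2$---valid because $\supp\psi\subset[-1,1]^d$, so the translates $\psi(\cdot-n)$ have bounded overlap and $\sum_n|\psi(\xi-n)|^2\sim1$ (Plancherel)---completes the moment bound.

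Finally, write $X=\|U(t)f^\omega\|_{L^q_tL^r_x}$ and $A=T^{1/q}\|f\|_{L^2}$. Chebyshev gives $P(X>\lambda)\le\lambda^{-p}\|X\|_{L^p}^p\le(C\sqrt p\,A/\lambda)^p$; choosing $p\sim(\lambda/A)^2$, which is admissible once $\lambda\gtrsim A$ (the complementary range being trivial after enlarging $C$), produces $P(X>\lambda)\le C\exp(-c\lambda^2A^{-2})=C\exp(-c\lambda^2T^{-2/q}\|f\|_{L^2}^{-2})$, as desired. I do not expect a genuine obstacle here; the one mild subtlety is arranging the uniformity in $n$ of the unit-scale estimate, which is exactly why one replaces the lossy global Strichartz estimate of Lemma \ref{str} by the elementary Fourier-support argument.
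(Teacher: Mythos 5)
Your proposal is correct and is essentially the paper's own argument: the paper gives no independent proof but invokes the scheme of \cite{beohpo1,beohpo2}, noting that the only property of $U(t)$ needed is preservation of the $L^2$-norm (equivalently $|e^{-it|\xi|^\al}|=1$), which is exactly the Minkowski--Khintchine--unit-scale-Bernstein--Chebyshev chain you carry out. No discrepancies to report.
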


\section{Bourgain Space}
We introduce Bourgain space $X^{s,b}$ defined as follows: for $s, b \in \mathbb R$
\[X^{s,b} = \big\{\varphi \in \mathcal S' : \|\varphi\|_{X^{s,b}} := \|\langle \xi \rangle^s \langle \tau - |\xi|^\al \rangle^b \widetilde \varphi (\tau, \xi)
\|_{L^2(\mathbb R \times \mathbb{R}^d)} < \infty\big\}, \]
where $\langle a \rangle = 1  + |a|$ and $\widetilde \varphi$ denotes the time-space Fourier transform. In what follows we mention a few of well-known properties of $X^{s,b}$ space.
\begin{lem}\label{xsb}
Let $T \in (0,1)$ and $b \in \big(\frac 12, \frac 32\big]$. Then for $s \in \mathbb{R}$ and $\theta \in \big[0, \frac 32 - b\big)$ the following hold
\begin{align*}
\|\eta_T(t)U(t)f\|_{X^{s,b}(\mathbb{R} \times \mathbb{R}^d)} &\lesssim T^{\frac 12 - b}\|f\|_{H^s(\mathbb{R}^d)},\\
\|\eta_T(t)\int^t_0 U(t-t')\eta_T(t')F(t')dt'\|_{X^{s,b}(\mathbb{R} \times \mathbb{R}^d)} &\lesssim T^\theta \|F\|_{X^{s,b-1+\theta}(\mathbb{R} \times \mathbb{R}^d)}.
\end{align*}
\end{lem}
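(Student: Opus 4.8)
The plan is as follows. First I would reduce both inequalities to the case $s=0$: since the Fourier multiplier $\langle D\rangle^{s}$ commutes both with $U(t)$ and with multiplication by functions of $t$ alone, applying the $s=0$ statements to $\langle D\rangle^{s}f$ and $\langle D\rangle^{s}F$ recovers the general case. After this reduction everything is governed by the one‑dimensional modulation variable $\lambda=\tau-|\xi|^{\al}$, with $\xi$ a passive parameter; the only structural input is that $U(t)$ is a unitary Fourier multiplier whose symbol depends on $\xi$ only, so the fractional dispersion $|\xi|^{\al}$ plays no special role. Throughout, $\eta_T(t)=\eta(t/T)$ with $\eta\in C_c^{\infty}(\RR)$ supported in $[-1,1]$, and $\widehat{\eta_T}(\lambda)=T\widehat{\eta}(T\lambda)$; the only property used is that $\widehat\eta$ is Schwartz.

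For the first estimate: with the Fourier conventions of the paper, $\widetilde{U(t)f}(\tau,\xi)$ equals a constant times $\delta(\tau-|\xi|^{\al})\widehat f(\xi)$, so the time cutoff becomes a convolution in $\tau$ and $\widetilde{\eta_T(t)U(t)f}(\tau,\xi)=c\,\widehat{\eta_T}(\tau-|\xi|^{\al})\widehat f(\xi)$. Translating the inner integral by $|\xi|^{\al}$ gives
\[
\|\eta_T(t)U(t)f\|_{X^{0,b}}^{2}\lesssim\Big(\int_{\RR}\langle\lambda\rangle^{2b}|\widehat{\eta_T}(\lambda)|^{2}\,d\lambda\Big)\|f\|_{L^{2}}^{2},
\]
and, substituting $\lambda\mapsto\lambda/T$ and using $\langle\lambda/T\rangle\le T^{-1}\langle\lambda\rangle$ for $0<T\le1$, the last integral is $\lesssim T^{1-2b}\int_{\RR}\langle\lambda\rangle^{2b}|\widehat\eta(\lambda)|^{2}\,d\lambda\lesssim T^{1-2b}$ because $\widehat\eta$ is Schwartz; this yields $\|\eta_T(t)U(t)f\|_{X^{0,b}}\lesssim T^{\frac12-b}\|f\|_{L^{2}}$. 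The identical computation with $\eta$ replaced by any fixed compactly supported smooth function, in particular by $t^{k}\eta(t)$, will be reused below; note that on the support of $\eta$ one has $|t|\le1$, so the relevant weighted norms of $t^{k}\eta$ grow only polynomially in $k$.

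For the second (inhomogeneous) estimate — the one requiring real work — I would follow the classical treatment of the Duhamel estimate in Bourgain spaces (this is the standard $X^{s,b}$ estimate; see \cite{chho} and the references therein). Writing $G=\eta_T F$ and taking the spatial Fourier transform of the Duhamel term, one computes
\[
\mathcal F_x\Big[\int_{0}^{t}U(t-t')G(t')\,dt'\Big](\xi)=c\,e^{-it|\xi|^{\al}}\int_{\RR}\frac{e^{-it\lambda}-1}{-i\lambda}\,\widetilde G(\tau',\xi)\,d\tau',\qquad\lambda=\tau'-|\xi|^{\al},
\]
so on the Fourier side the operator is multiplication, in the modulation variable $\lambda$ of the forcing, by the symbol $m_t(\lambda)=\frac{e^{-it\lambda}-1}{-i\lambda}$ (which satisfies $|m_t(\lambda)|\lesssim\min(|t|,\langle\lambda\rangle^{-1})$), followed by a projection onto the characteristic surface $\tau=|\xi|^{\al}$. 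I would split the $\lambda$‑integral into $|\lambda|\lesssim1$ and $|\lambda|\gtrsim1$. On $|\lambda|\gtrsim1$ one has the gain $|m_t(\lambda)|\lesssim\langle\lambda\rangle^{-1}$, i.e. a full power of the modulation weight is recovered; the only constraint is convergence of an integral of the type $\int_{|\lambda|\gtrsim1}|\lambda|^{-2}\langle\lambda\rangle^{-2(b-1+\theta)}\,d\lambda$ near $\lambda=\infty$, which holds because $b+\theta>\frac12$ (automatic since $b>\frac12$). On $|\lambda|\lesssim1$ the modulation weight is of size one, so I would Taylor‑expand $e^{-it\lambda}-1=\sum_{k\ge1}\frac{(-it\lambda)^{k}}{k!}$ and reduce to the series $\sum_{k\ge1}c_{k}\,t^{k}\,U(t)h_{k}$ with $\|h_{k}\|_{L^{2}}\lesssim\|G\|_{X^{0,b-1+\theta}}$ (uniformly in $k$, up to harmless polynomial factors), estimating each summand by the first estimate applied with the cutoff $t^{k}\eta_T(t)$, which carries an extra factor $\sim T^{k}$; the series then converges and the $k=1$ term dominates. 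Reassembling the pieces, multiplying by the outer cutoff $\eta_T(t)$, and using that multiplication by $\eta_T$ is bounded on $X^{0,b-1+\theta}$ uniformly in $T\le1$ (valid precisely because $|b-1+\theta|<\frac12$), one is led to $\lesssim T^{\theta}\|\eta_T F\|_{X^{0,b-1+\theta}}\lesssim T^{\theta}\|F\|_{X^{0,b-1+\theta}}$; the admissible range $0\le\theta<\frac32-b$ is exactly the range in which $b-1+\theta$ stays strictly below the critical modulation exponent $\frac12$, and at $\theta=\frac32-b$ the relevant $\lambda$‑integrals and the mapping properties of the cutoffs degenerate, which is why that value is excluded.

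The step I expect to be the main obstacle is the tracking of the powers of $T$ in the second estimate. Several of the pieces do not individually produce a gain — some, treated crudely, even lose a factor $T^{\frac12-b}$ coming from the projection onto $\tau=|\xi|^{\al}$ composed with the multiplication by the outer cutoff — and it is only after correctly balancing the resonant and non‑resonant contributions and systematically exploiting the time‑support of the inner cutoff $\eta_T(t')$ that a genuine gain $T^{\theta}$ is obtained over the whole stated range of $\theta$. This bookkeeping is the classical delicate point of the inhomogeneous $X^{s,b}$ estimate and, as already noted, uses nothing specific to the fractional dispersion.
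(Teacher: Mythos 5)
The paper does not prove this lemma at all --- it is stated as a ``well-known property of $X^{s,b}$ spaces'' and used as a black box --- so your attempt can only be measured against the standard argument it gestures at. Your treatment of the first estimate is complete and correct: the reduction to $s=0$, the identity $\widetilde{\eta_T U(t)f}(\tau,\xi)=c\,\widehat{\eta_T}(\tau-|\xi|^{\al})\widehat f(\xi)$, and the scaling computation $\int\langle\lambda\rangle^{2b}|\widehat{\eta_T}(\lambda)|^2d\lambda\lesssim T^{1-2b}$ are exactly the standard proof. (The only caveat is a sign mismatch internal to the paper: with $U(t)=\mathcal F^{-1}e^{-it|\xi|^{\al}}\mathcal F$ the $X^{s,b}$ weight should be $\langle\tau+|\xi|^{\al}\rangle$; you have implicitly corrected this, which is fine.)

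The second estimate, however, contains a genuine gap, and it sits precisely where you flag ``the main obstacle.'' Splitting the modulation integral at the $T$-independent threshold $|\lambda|\lesssim 1$ versus $|\lambda|\gtrsim 1$ cannot produce the factor $T^{\theta}$: on the region $|\lambda|\gtrsim 1$ the Cauchy--Schwarz integral $\int_{|\lambda|\ge 1}|\lambda|^{-2}\langle\lambda\rangle^{-2(b-1+\theta)}d\lambda$ is merely $O(1)$, and feeding the resulting $L^2$ function into your first estimate costs $T^{\frac12-b}$, a net \emph{loss}. You acknowledge this and defer the repair to an unspecified ``balancing of resonant and non-resonant contributions,'' but that balancing is the entire content of the lemma and is never carried out. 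The standard fix is to place the threshold at $|\lambda|\sim 1/T$: then $\int_{|\lambda|\ge 1/T}|\lambda|^{-2}\langle\lambda\rangle^{-2b'}d\lambda\sim T^{1+2b'}$ (with $b'=b-1+\theta>-\frac12$) supplies a factor $T^{\frac12+b'}$ which combines with the $T^{\frac12-b}$ from the linear estimate to give exactly $T^{1-b+b'}=T^{\theta}$, while on $|\lambda|\le 1/T$ the Taylor expansion yields $\|h_k\|_{L^2}\lesssim T^{-k+\frac12+b'}\|G\|_{X^{0,b'}}$, so that the factor $T^{k}$ from the cutoff $t^{k}\eta_T$ again produces $T^{1-b+b'}$ uniformly in $k$. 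Equivalently one can quote the multiplier lemma $\|\eta_T u\|_{X^{0,b_1}}\lesssim T^{b_2-b_1}\|u\|_{X^{0,b_2}}$ for $-\frac12<b_1\le b_2<\frac12$. Without one of these inputs your argument does not close; with it, everything else you wrote (the identification of $m_t(\lambda)$, the role of $b+\theta>\frac12$, and the explanation of why $\theta<\frac32-b$ is the natural endpoint) is sound.
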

\begin{lem}\label{str-xsb}
Let $(q,r)$ satisfy $\frac 2 q + \frac dr = \frac d2,$ and $(d,q,r) \neq (2,2,\infty)$.
Then for $b>\frac 12$ we have
\begin{align*}
\|u\|_{L^{q}_tL^{r}_x(\mathbb{R} \times \mathbb{R}^d)} \lesssim \|u\|_{X^{\frac{2-\al}{q},b}(\mathbb{R} \times \mathbb{R}^d)}.
\end{align*}
\end{lem}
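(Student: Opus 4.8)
The plan is to prove Lemma \ref{str-xsb} by the standard transfer principle, which upgrades the linear Strichartz estimate of Lemma \ref{str} to an estimate measured in the Bourgain norm. First I would recast Lemma \ref{str} into a form free of the propagator's derivative prefactor. Applying that lemma with $|\nabla|^{(2-\al)/q}g$ in place of $f$ and using that $|\nabla|$ commutes with $U(t)$ gives $\|U(t)g\|_{L^q_tL^r_x} \lesssim \||\nabla|^{(2-\al)/q}g\|_{L^2} = \|g\|_{\dot H^{(2-\al)/q}}$. Since $(2-\al)/q \ge 0$ for $1 < \al \le 2$ and $q \ge 2$, I may pass to the inhomogeneous norm via $\|g\|_{\dot H^{(2-\al)/q}} \le \|g\|_{H^{(2-\al)/q}}$, and it is the clean estimate $\|U(t)g\|_{L^q_tL^r_x} \lesssim \|g\|_{H^{(2-\al)/q}}$ that feeds the transfer argument.

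Next I would decompose $u$ along the modulation variable $\lambda = \tau - |\xi|^\al$. For each $\lambda \in \RR$, define $f_\lambda$ through its spatial Fourier transform $\widehat{f_\lambda}(\xi) = \widetilde u(\lambda + |\xi|^\al, \xi)$, so that Fourier inversion in the time variable expresses $u$ as a superposition of free evolutions,
\[u(t,x) = \int_\RR e^{it\lambda}\,(U(t)f_\lambda)(x)\,d\lambda,\]
with the normalization fixed so that this inversion holds. After the change of variables $\tau \mapsto \lambda + |\xi|^\al$ in the definition of the norm, the Bourgain norm disintegrates as $\|u\|_{X^{s,b}}^2 = \int_\RR \langle\lambda\rangle^{2b}\|f_\lambda\|_{H^s}^2\,d\lambda$ with $s = (2-\al)/q$.

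The third step is a direct estimate. Since $q, r \ge 2 \ge 1$, Minkowski's integral inequality moves the space-time norm inside the $\lambda$-integral, and the recast Strichartz estimate then applies to each free evolution:
\[\|u\|_{L^q_tL^r_x} \le \int_\RR \|U(t)f_\lambda\|_{L^q_tL^r_x}\,d\lambda \lesssim \int_\RR \|f_\lambda\|_{H^{(2-\al)/q}}\,d\lambda.\]
Inserting the weight $\langle\lambda\rangle^b$ and its reciprocal and applying Cauchy-Schwarz in $\lambda$ gives
\[\int_\RR \|f_\lambda\|_{H^{(2-\al)/q}}\,d\lambda \le \Big(\int_\RR \langle\lambda\rangle^{-2b}\,d\lambda\Big)^{1/2}\Big(\int_\RR \langle\lambda\rangle^{2b}\|f_\lambda\|_{H^{(2-\al)/q}}^2\,d\lambda\Big)^{1/2},\]
where the first factor is finite precisely because $b > \frac12$, and the second factor equals $\|u\|_{X^{(2-\al)/q,b}}$ by the disintegration above. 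This closes the estimate.

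The hard part is not any single inequality but the Fourier-analytic bookkeeping of the modulation decomposition---keeping the constants and the weight $\langle\tau - |\xi|^\al\rangle$ aligned with the sign convention of $U(t)$---together with tracking where the hypotheses are used. The admissibility $\frac2q + \frac dr = \frac d2$ and the exclusion $(d,q,r)\neq(2,2,\infty)$ enter only through the single invocation of Lemma \ref{str}: I apply that lemma solely at pairs it covers, so the forbidden endpoint is inherited rather than argued afresh. The condition $b > \frac12$ is used exactly once, to make $\langle\lambda\rangle^{-2b}$ integrable in the Cauchy-Schwarz step; this is the only place the Bourgain regularity in time is consumed, which is why the estimate loses nothing in space---the spatial regularity $(2-\al)/q$ is carried verbatim from Lemma \ref{str}.
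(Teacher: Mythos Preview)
Your proof is correct and is precisely the standard transfer principle the paper has in mind; the paper itself gives no details, merely stating that the lemma ``follows from Strichartz estimates (Lemma \ref{str}),'' and your argument is the canonical way to cash that out. The only cosmetic wrinkle is the sign convention: with the paper's $U(t)=e^{-it|\nabla|^\alpha}$ the free solutions concentrate at $\tau=-|\xi|^\alpha$, so either the weight or your decomposition formula needs a sign flip, but you already flag this as bookkeeping and it does not affect the estimate.
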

The above lemma follows from Strichartz estimates (Lemma \ref{str}). By interpolation with trivial estimate $\|u\|_{L^2_{t,x}} \lesssim \|u\|_{X^{0,0}}$, we have the following lemma.
\begin{lem}\label{mass-xsb}
Let $ q \ge 2$. Then for $b>\frac 12$ we have
\begin{align*}
\|u\|_{L^q_tL^2_x} \lesssim \|u\|_{X^{0,b(1-\frac 2q)}(\mathbb{R} \times \mathbb{R}^d)}.
\end{align*}
\end{lem}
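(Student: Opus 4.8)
The plan is to follow the hint in the text: obtain the estimate by complex interpolation between the endpoints $q=2$ and $q=\infty$, applied to the identity map.

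First I would establish the $q=\infty$ endpoint, namely the embedding $\|u\|_{L^\infty_tL^2_x} \lesssim \|u\|_{X^{0,b}}$ for any $b>\frac12$. Writing $\widetilde u(\tau,\xi)$ for the space-time Fourier transform and recovering the spatial Fourier transform through $\widehat u(t,\xi) = \int_{\mathbb R} e^{it\tau}\widetilde u(\tau,\xi)\,d\tau$, I insert the weight $\langle \tau-|\xi|^\al\rangle^{\pm b}$ and apply Cauchy--Schwarz in $\tau$:
\[
|\widehat u(t,\xi)| \le \Big(\int_{\mathbb R}\langle \tau-|\xi|^\al\rangle^{-2b}\,d\tau\Big)^{1/2}\Big(\int_{\mathbb R}\langle \tau-|\xi|^\al\rangle^{2b}|\widetilde u(\tau,\xi)|^2\,d\tau\Big)^{1/2}.
\]
The first factor equals $\big(\int_{\mathbb R}\langle\sigma\rangle^{-2b}\,d\sigma\big)^{1/2}$, a finite constant independent of $\xi$ and $t$ precisely because $b>\frac12$. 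Squaring, integrating in $\xi$ by Plancherel, and taking the supremum over $t$ (the right-hand side being $t$-independent) yields $\|u\|_{L^\infty_tL^2_x}\lesssim\|u\|_{X^{0,b}}$.

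Next I record the trivial $q=2$ endpoint: by Plancherel in both variables, $\|u\|_{L^2_tL^2_x} = \|\widetilde u\|_{L^2_{\tau,\xi}} = \|u\|_{X^{0,0}}$, so the identity map is bounded from $X^{0,0}$ into $L^2_tL^2_x$. Finally I interpolate. For $q\in(2,\infty)$ I set $\theta = 1-\frac2q$, so that $\frac1q = \frac{1-\theta}{2}$. On the target side, complex interpolation of Bochner spaces gives $[L^2_tL^2_x,\,L^\infty_tL^2_x]_\theta = L^q_tL^2_x$. On the source side, $X^{0,c}$ is the weighted $L^2$ space in $(\tau,\xi)$ with weight $\langle\tau-|\xi|^\al\rangle^{c}$, so by Stein--Weiss interpolation of weighted $L^2$ spaces $[X^{0,0},\,X^{0,b}]_\theta = X^{0,\theta b}$ with $\theta b = b\big(1-\frac2q\big)$. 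Since the identity map is bounded between both pairs of endpoints, interpolation gives its boundedness from $X^{0,b(1-2/q)}$ into $L^q_tL^2_x$, which is exactly the claim; the cases $q=2$ and $q=\infty$ are the endpoints themselves.

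There is no serious obstacle in this argument. The only points requiring a little care are the correct identification of the interpolation exponent $\theta = 1-\frac2q$ and the verification that the modulation-weighted spaces $X^{0,c}$ form a complex interpolation scale in $c$, which reduces to the standard weighted-$L^2$ interpolation recalled above.
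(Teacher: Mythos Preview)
Your proof is correct and follows essentially the same approach as the paper: interpolate between the trivial identity $\|u\|_{L^2_{t,x}} = \|u\|_{X^{0,0}}$ and the embedding $\|u\|_{L^\infty_t L^2_x} \lesssim \|u\|_{X^{0,b}}$ (which is the $(q,r)=(\infty,2)$ case of Lemma~\ref{str-xsb}, and which you prove directly via Cauchy--Schwarz). The interpolation parameter $\theta = 1-\frac{2}{q}$ and the identification of the interpolated spaces are handled correctly.
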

Because of scaling symmetry, Strichartz estimate is optimal. But if one considers interaction of two different frequency localized data, it is possible to obtain bilinear Strichartz estimate.
\begin{lem}[Lemma 2.2 in \cite{chkl1}]\label{bistr}
Let $d \geq 2$. Suppose that $\supp \widehat{f} \subset A(N_1)$ and $\supp \widehat{g} \subset A(N_2)$ with $N_1 \leq N_2$. Then we have
\begin{align*}
\|U(t)fU(t)g\|_{L_{t,x}^2} &\lesssim \big( \frac {N_1}{N_2} \big)^{\frac {d + \al-2}{4}} (N_1N_2)^{\frac {d-\al}4} \|f\|_{L^2_x}\|g\|_{L^2_x}
=N_1^{\frac{d-1}{2}}N_2^{\frac{1-\al}{2}} \|f\|_{L^2_x}\|g\|_{L^2_x}.
\end{align*}
\end{lem}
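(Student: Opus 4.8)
The plan is to pass to the space–time Fourier side and reduce the estimate to a uniform bound on the measure of the resonance surface. Writing $w = U(t)f\,U(t)g$ and using that $\mathcal F_{t,x}[U(t)f](\tau,\xi) = c\,\widehat f(\xi)\,\delta(\tau + |\xi|^\alpha)$, the space–time Fourier transform of the product is the convolution
\[
\widetilde w(\tau,\xi) = c\int_{\mathbb{R}^d}\widehat f(\xi_1)\,\widehat g(\xi-\xi_1)\,\delta\big(\tau + \Phi(\xi_1)\big)\,d\xi_1,\qquad \Phi(\xi_1) := |\xi_1|^\alpha + |\xi-\xi_1|^\alpha,
\]
where on the support $\xi_1\in A(N_1)$ and $\xi-\xi_1\in A(N_2)$. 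By Plancherel, $\|w\|_{L^2_{t,x}}^2 = c\,\|\widetilde w\|_{L^2_{\tau,\xi}}^2$, so the whole problem reduces to estimating the right-hand side. Observe that I am integrating in the \emph{smaller}-frequency variable $\xi_1\in A(N_1)$, which is what will eventually produce the favorable power $N_1^{d-1}$.

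Next I would apply Cauchy--Schwarz to the $\xi_1$-integral with respect to the positive measure $\delta(\tau+\Phi(\xi_1))\,d\xi_1$, giving
\[
|\widetilde w(\tau,\xi)|^2 \lesssim J(\tau,\xi)\int |\widehat f(\xi_1)|^2|\widehat g(\xi-\xi_1)|^2\,\delta\big(\tau+\Phi(\xi_1)\big)\,d\xi_1,
\]
where
\[
J(\tau,\xi) := \int_{\xi_1\in A(N_1),\,\xi-\xi_1\in A(N_2)}\delta\big(\tau+\Phi(\xi_1)\big)\,d\xi_1
\]
is the surface measure of the resonance set. Bounding $J$ by its supremum and integrating in $(\tau,\xi)$, the $\tau$-integration removes the remaining delta and Fubini leaves $\|\widehat f\|_{L^2}^2\|\widehat g\|_{L^2}^2 = \|f\|_{L^2}^2\|g\|_{L^2}^2$. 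Thus the lemma follows once I establish the uniform bound $\sup_{\tau,\xi}J(\tau,\xi)\lesssim N_1^{d-1}N_2^{1-\alpha}$, whose square root is exactly the claimed constant $N_1^{(d-1)/2}N_2^{(1-\alpha)/2}$.

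The core of the argument is the estimate on $J$, which I would carry out via the coarea formula,
\[
J(\tau,\xi) = \int_{\Sigma_{\tau,\xi}}\frac{d\mathcal H^{d-1}(\xi_1)}{|\nabla_{\xi_1}\Phi(\xi_1)|},\qquad \nabla_{\xi_1}\Phi = \alpha\big(|\xi_1|^{\alpha-2}\xi_1 - |\xi-\xi_1|^{\alpha-2}(\xi-\xi_1)\big),
\]
where $\Sigma_{\tau,\xi} = \{\Phi = -\tau\}$ intersected with the frequency supports. Since $|\xi_1|\sim N_1\le N_2\sim|\xi-\xi_1|$ and $\alpha>1$, the term coming from the larger frequency dominates, so in the genuinely separated regime $|\nabla_{\xi_1}\Phi|\gtrsim N_2^{\alpha-1}$; meanwhile $\Sigma_{\tau,\xi}\subset\{|\xi_1|\sim N_1\}$ has $(d-1)$-dimensional measure $\lesssim N_1^{d-1}$, and the two bounds combine to the desired $J\lesssim N_1^{d-1}N_2^{1-\alpha}$. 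I expect the main obstacle to be the comparable-frequency regime $N_1\sim N_2$, where $\nabla_{\xi_1}\Phi$ degenerates at the single critical point $\xi_1 = \xi/2$ (at which $|\xi_1| = |\xi-\xi_1|$); there the clean gradient lower bound fails, and one must instead exploit the nondegeneracy of the Hessian of $\Phi$ --- equivalently, the invertibility and strict monotonicity of $v\mapsto|v|^{\alpha-2}v$ for $\alpha>1$ --- to control the level-set measure near the critical point, splitting the $\xi_1$-region into a transversal part handled as above and a small Morse neighborhood of $\xi_1=\xi/2$. A secondary point requiring care is the uniform surface-area bound $\mathcal H^{d-1}(\Sigma_{\tau,\xi})\lesssim N_1^{d-1}$, which rests on the geometry of the annuli $A(N_1),A(N_2)$ and the convexity of $|\cdot|^\alpha$.
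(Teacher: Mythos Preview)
The paper does not give its own proof of this lemma; it is quoted from \cite{chkl1}. Your argument via the space--time Fourier transform, Cauchy--Schwarz against the resonance measure $\delta(\tau+\Phi(\xi_1))\,d\xi_1$, and the coarea bound $\sup_{\tau,\xi}J(\tau,\xi)\lesssim N_1^{d-1}N_2^{1-\alpha}$ is correct. You have correctly identified the two points that need care: the vanishing of $\nabla_{\xi_1}\Phi$ at $\xi_1=\xi/2$ when $N_1\sim N_2$, which your Morse/Hessian argument handles (the Hessian has eigenvalues $\sim N_1^{\alpha-2}>0$, so near the critical value the level set is an approximate sphere of radius $r\le N_1$ and $J\sim N_1^{2-\alpha}r^{d-2}\le N_1^{d-\alpha}$), and the surface-area bound $\mathcal H^{d-1}(\Sigma_{\tau,\xi})\lesssim N_1^{d-1}$, which follows because $\Phi$ is convex and the boundary of a convex body contained in a ball of radius $N_1$ has area $\lesssim N_1^{d-1}$.

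For comparison, the paper \emph{does} prove the companion estimate Lemma~\ref{bistrball} by a somewhat different route, which is also the approach of \cite{chkl1} for the present lemma: after rotating so that $\widehat g$ is supported near $N_2 e_1$, one freezes the transverse components $\bar\xi=(\xi_2,\dots,\xi_d)$ of the low-frequency variable, performs the change of variables $(\xi_1,\eta)\mapsto(\xi+\eta,|\xi|^\alpha+|\eta|^\alpha)$ with Jacobian $\alpha\big|\xi_1|\xi|^{\alpha-2}-\eta_1|\eta|^{\alpha-2}\big|\sim N_2^{\alpha-1}$, applies Plancherel in $(t,x)$, and then uses Minkowski in $\bar\xi$ over a ball of radius $N_1$ to produce the factor $N_1^{(d-1)/2}$. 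That argument only needs a lower bound on a single \emph{directional} derivative of $\Phi$ rather than on the full gradient, so the critical-point issue at $\xi_1=\xi/2$ is largely avoided (for $N_1\sim N_2$ one disposes of it by an extra angular decomposition of $\widehat f$ rather than a Morse analysis). Your coarea approach is more geometric and makes the role of the resonance surface explicit; the change-of-variables approach is shorter and closer to the methods actually used in the paper.
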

\noindent
Moreover, we prove bilinear estimates for data whose Fourier support in a small ball.
\begin{lem}\label{bistrball}
Let $d \geq 2$. Suppose that $\supp \widehat{f} \subset B(\xi_0, \rho_1), $ with $\rho_1, |\xi_0| \ll 1$ and $\supp \widehat{g} \subset A(1)$. Then we have
\begin{align*}
\|U(t)fU(t)g\|_{L_{t,x}^2} &\lesssim \rho_1^{\frac {d-1}{2}} \|f\|_{L^2_x}\|g\|_{L^2_x}.
\end{align*}
\end{lem}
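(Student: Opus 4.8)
The plan is to run the classical Fourier-side proof of bilinear Strichartz estimates adapted to the propagator $U(t)=\mathcal F^{-1}e^{-it|\xi|^\al}\mathcal F$, and to reduce the whole estimate to a measure bound on a level set of the resonance phase
\[
\phi_\zeta(\xi):=|\xi|^\al+|\zeta-\xi|^\al .
\]
The point will be that on the Fourier support in question the gradient $\nabla_\xi\phi_\zeta$ is comparable to $1$ in size (this is where $1<\al$ is used) and varies only by $O(\rho_1)$ in direction over $B(\xi_0,\rho_1)$, so that $\{\phi_\zeta=\tau\}$ meets that ball in a set of $(d-1)$-dimensional Hausdorff measure $\lesssim\rho_1^{d-1}$.

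First I would compute, for fixed spatial frequency $\zeta$,
$\mathcal F_x\big[U(t)f\,U(t)g\big](\zeta)=c\int e^{-it\phi_\zeta(\xi)}\widehat f(\xi)\,\widehat g(\zeta-\xi)\,d\xi$, and apply Plancherel in $x$. Rewriting the inner $\xi$-integral by the coarea formula with respect to $\phi_\zeta$ realizes it, as a function of $t$, as the inverse Fourier transform of
$\tau\mapsto\int_{\{\phi_\zeta=\tau\}}\widehat f(\xi)\widehat g(\zeta-\xi)\,|\nabla_\xi\phi_\zeta(\xi)|^{-1}\,d\mathcal H^{d-1}(\xi)$, so that Plancherel in $t$ turns $\|U(t)fU(t)g\|_{L^2_{t,x}}^2$ into a double integral $\int\!\!\int|\,\cdot\,|^2\,d\tau\,d\zeta$ of that expression. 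A Cauchy–Schwarz on the level-set integral pulls out the factor
\[
M:=\sup_{\tau,\zeta}\int_{\{\phi_\zeta(\xi)=\tau\}\,\cap\,\{\xi\in B(\xi_0,\rho_1)\}\,\cap\,\{\zeta-\xi\in A(1)\}}\frac{d\mathcal H^{d-1}(\xi)}{|\nabla_\xi\phi_\zeta(\xi)|},
\]
while running the coarea formula backwards collapses the remaining double integral to $\|\widehat f\|_{L^2}^2\|\widehat g\|_{L^2}^2$. This yields $\|U(t)fU(t)g\|_{L^2_{t,x}}^2\lesssim M\,\|f\|_{L^2_x}^2\|g\|_{L^2_x}^2$, and reduces the lemma to the claim $M\lesssim\rho_1^{d-1}$.

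For the measure bound, on the support we have $|\xi|\le|\xi_0|+\rho_1\ll1$ and $|\eta|\sim1$, where $\eta:=\zeta-\xi$, and $\nabla_\xi\phi_\zeta(\xi)=\al\big(|\xi|^{\al-2}\xi-|\eta|^{\al-2}\eta\big)$. Since $1<\al\le2$, the first term has magnitude $\al|\xi|^{\al-1}\ll1$ and the second has magnitude $\al|\eta|^{\al-1}\sim1$, so $|\nabla_\xi\phi_\zeta|\sim1$ there. Moreover, as $\xi$ ranges over $B(\xi_0,\rho_1)$ the vector $|\eta|^{\al-2}\eta$ ranges over a set of diameter $O(\rho_1)$ (the map $\eta\mapsto|\eta|^{\al-2}\eta$ being $C^1$ with bounded derivative for $|\eta|\sim1$), while $|\xi|^{\al-2}\xi$ stays of magnitude $\ll1$; hence $\nabla_\xi\phi_\zeta$ remains inside a cone of small aperture about a fixed unit vector. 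By the implicit function theorem $\{\phi_\zeta=\tau\}$ is, within $B(\xi_0,\rho_1)$, the graph of a $C^1$ function over a hyperplane with Lipschitz constant $\lesssim1$, so its $\mathcal H^{d-1}$-measure there is $\lesssim\rho_1^{d-1}$; together with $|\nabla_\xi\phi_\zeta|\sim1$ this gives $M\lesssim\rho_1^{d-1}$, and the estimate $\|U(t)fU(t)g\|_{L^2_{t,x}}\lesssim\rho_1^{(d-1)/2}\|f\|_{L^2_x}\|g\|_{L^2_x}$ follows.

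The main obstacle is precisely the level-set measure bound near the possible degeneracy $\xi=0$: one must verify that $\phi_\zeta$ is genuinely $C^1$ near $0$ and, crucially, that $\nabla_\xi\phi_\zeta$ neither vanishes nor swings in direction on $B(\xi_0,\rho_1)$. This is where the hypothesis $\al>1$ enters, through $|\nabla_\xi(|\xi|^\al)|=\al|\xi|^{\al-1}\to0$ as $\xi\to0$; for $\al\le1$ the analogous estimate on a small ball touching the origin would break down. (When $|\xi_0|\lesssim\rho_1$ the ball sits in an annulus $A(\rho_1)$ and Lemma \ref{bistr} already gives the bound, so the genuinely new case is $|\xi_0|\gg\rho_1$, which the argument above handles uniformly.)
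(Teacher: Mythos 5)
Your proof is correct, but it follows a different (though closely related) route from the paper's. The paper first reduces, by rotation and decomposition, to $\supp\widehat g\subset B(e_1,\delta)$, then freezes the transverse frequencies $\bar\xi=(\xi_2,\dots,\xi_d)$ of $f$ and applies Plancherel after the $(d+1)$-dimensional change of variables $(\xi_1,\eta)\mapsto(\xi+\eta,|\xi|^\al+|\eta|^\al)$, whose Jacobian $\al\big|\xi_1|\xi|^{\al-2}-\eta_1|\eta|^{\al-2}\big|\sim 1$ encodes exactly the same transversality you exploit (small term from $|\xi|^{\al-1}\ll 1$ using $\al>1$, term of size $\sim 1$ from $\eta$); the factor $\rho_1^{(d-1)/2}$ then comes from Minkowski's inequality in $\bar\xi$ followed by Cauchy--Schwarz over the $\bar\xi$-projection of $B(\xi_0,\rho_1)$, which has measure $\rho_1^{d-1}$. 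You instead take the coarea/level-set formulation: Plancherel in $(t,x)$, Cauchy--Schwarz on the slices $\{\phi_\zeta=\tau\}$, and the bound $M\lesssim\rho_1^{d-1}$ on the weighted surface measure of the characteristic surface inside the small ball. The two arguments hinge on the identical nondegeneracy of $\nabla_\xi\big(|\xi|^\al+|\zeta-\xi|^\al\big)$, but your version avoids the preliminary rotation/decomposition of $\supp\widehat g$ and makes the geometric source of the gain (the surface measure of a Lipschitz graph in a ball of radius $\rho_1$) explicit, at the cost of having to justify the graph property of the level set and the applicability of the coarea formula near $\xi=0$ --- which you do address, correctly isolating $\al>1$ as the hypothesis that keeps $\phi_\zeta$ of class $C^1$ with nonvanishing, direction-stable gradient there. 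Both proofs are sound; the paper's slicing argument is slightly more elementary in that it only uses a one-dimensional change of variables per slice rather than Hausdorff-measure machinery.
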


\begin{proof}
By decomposing the Fourier support of $g$ into finite number of sets, rotation and mild dilation, it suffices to prove the estimates when $\supp \widehat{g} \subset B(e_1, \delta)$ for some $0 < \delta \ll 1$. By definition of $U(t)$, we have
\[U(t)f(x)U(t)g(x) = (2\pi)^{-2d}\int e^{i(x\cdot(\xi + \eta) -t(|\xi|^\al + |\eta|^\al))}\widehat{f}(\xi)\widehat{g}(\eta)d\xi d\eta.\]
For each $\bar{\xi} = (\xi_2,\cdots,\xi_d)$, we define a bilinear operator
\[B_{\bar{\xi}}(f,g) = \int_{\mathbb{R}^{1+d}} e^{i(x\cdot(\xi + \eta) -t(|\xi|^\al + |\eta|^\al))}\widehat{f}(\xi_1, \bar{\xi})\widehat{g}(\eta)d\xi_1d\eta.\]
We make the change of variable $\zeta = (\zeta_1, \zeta_2, \cdots, \zeta_{d+1}) = (\xi + \eta, |\xi|^\al + |\eta|^\al)$ with the observation $\big|\frac {\partial \zeta}{\partial (\xi_1, \eta)}\big| = \al|\xi_1|\xi|^{\al -2} - \eta_1|\eta|^{\al - 2}| \sim 1$. Then applying Plancherel's theorem and reversing the change variables $(\zeta \rightarrow (\xi_1, \eta))$, we get
\[\|B_{\bar{\xi}}(f,g)\|_{L_t^2L_x^2} \lesssim \|\widehat{f}(\xi_1, \bar{\xi}) \widehat{g}(\eta) \|_{L^2_{\xi_1, \eta}}.\]
Hence by Mikowski's inequality, we have
\[\|U(t)fU(t)g\|_{L_t^2L_x^2} = \|\int B_{\bar{\xi}}(f,g) d{\bar{\xi}}\|_{L_t^2L_x^2} \lesssim \int \|\widehat{f}(\xi_1, \bar{\xi}) \widehat{g}(\eta) \|_{L^2_{\xi_1, \eta}} d{\bar{\xi}} \lesssim \rho_1^{\frac {d-1}{2}} \|f\|_{L^2_x}\|g\|_{L^2_x}.\]
The last inequality follows from the fact that Fourier support of $f$ is in $B(\xi_0, \rho_1)$.
\end{proof}
From Lemma \ref{bistr}, Lemma \ref{bistrball} and definition of $X^{s,b}$ space, one can prove the following lemma.
\begin{lem}\label{xsbbi-1}
Let $d \geq 2$. Consider $u,v \in X^{0,b}$ for $b > \frac 12$.
Then we have
\begin{enumerate}
\item If $\supp \widehat{u} \subset A(N_1)$ and $\supp \widehat{v} \subset A(N_2)$ with $N_1 \leq N_2$, then
\begin{align*}
\|uv\|_{L_{t,x}^2} \lesssim N_1^{\frac{d-1}{2}}N_2^{\frac{1-\al}{2}} \|u\|_{X^{0,b}}\|v\|_{X^{0,b}}.
\end{align*}
\item If $\supp \widehat{u} \subset B(\xi_0, N)$ and $\supp \widehat{v} \subset A(N_2)$  with $N, |\xi_0| \ll N_2$, then
\begin{align*}
\|uv\|_{L_{t,x}^2} \lesssim N^{\frac{d-1}{2}}N_2^{\frac{1-\al}{2}} \|u\|_{X^{0,b}}\|v\|_{X^{0,b}}.
\end{align*}
\end{enumerate}
\end{lem}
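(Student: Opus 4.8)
The plan is to reduce both bilinear estimates to the corresponding free-wave bilinear estimates (Lemma \ref{bistr} and Lemma \ref{bistrball}) via the standard transfer principle for $X^{s,b}$ spaces. Recall that the transfer principle says: if a linear estimate of the form $\|\prod_i U(t)f_i\|_{L^p_{t,x}} \lesssim \prod_i \|f_i\|_{L^2_x}$ holds for free solutions, then the corresponding estimate $\|\prod_i u_i\|_{L^p_{t,x}} \lesssim \prod_i \|u_i\|_{X^{0,b}}$ holds for any $b > \tfrac12$. The proof of the transfer principle is to write each $u_i$ via the Fourier inversion in the modulation variable, $u_i(t) = c\int_{\RR} e^{it\lambda_i} U(t)\big(\mathcal F^{-1}_\xi[\widetilde{u_i}(\lambda_i + |\xi|^\al, \xi)]\big)\, d\lambda_i$, plug this into the left-hand side, pull the $\lambda_i$-integrals out, apply the free bilinear estimate inside, and then close with Cauchy--Schwarz in the $\lambda_i$ variables using $\int_{\RR} \langle\lambda\rangle^{-2b}\, d\lambda < \infty$ for $b > \tfrac12$. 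One must only observe that the frequency-support hypotheses on $\widehat{u}$ and $\widehat{v}$ are inherited by the functions $\mathcal F^{-1}_\xi[\widetilde{u}(\lambda + |\xi|^\al, \xi)]$, since the spatial Fourier support is unchanged by the modulation shift; thus the hypotheses of Lemma \ref{bistr} (resp. Lemma \ref{bistrball}) are met for every value of the modulation parameters.

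For part (1), I would apply this transfer principle directly with the free estimate from Lemma \ref{bistr}, using the hypotheses $\supp\widehat{u}\subset A(N_1)$, $\supp\widehat{v}\subset A(N_2)$, $N_1\le N_2$; the output is exactly $\|uv\|_{L^2_{t,x}}\lesssim N_1^{(d-1)/2}N_2^{(1-\al)/2}\|u\|_{X^{0,b}}\|v\|_{X^{0,b}}$. For part (2), the same transfer principle applies, now feeding in Lemma \ref{bistrball}: after a harmless rescaling by $N_2$ so that the high-frequency piece sits in $A(1)$ and the low-frequency piece sits in a ball $B(\xi_0/N_2, N/N_2)$ of small radius (this is legitimate since $N,|\xi_0|\ll N_2$), Lemma \ref{bistrball} gives the gain $(N/N_2)^{(d-1)/2}$ for the rescaled functions; undoing the scaling and tracking how the $L^2_x$ norms and the $X^{0,b}$ norms transform under dilation produces the stated bound $N^{(d-1)/2}N_2^{(1-\al)/2}$. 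One should check that the scaling exponents match: the free bilinear estimate is scale invariant in the combination appearing in Lemma \ref{bistrball}, so the bookkeeping is routine.

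The only mild subtlety, and the step I would be most careful about, is the rescaling in part (2): one must verify that Lemma \ref{bistrball}, which is stated for $\supp\widehat{g}\subset A(1)$ and $\supp\widehat{f}\subset B(\xi_0,\rho_1)$ with $\rho_1,|\xi_0|\ll 1$, transforms correctly under the parabolic scaling adapted to $|\xi|^\al$ — namely $f(x)\mapsto \lambda^{d}f(\lambda x)$ on the spatial side with the companion time scaling $t\mapsto \lambda^{-\al}t$ that preserves the dispersion relation — and that this scaling is compatible with the $X^{0,b}$-transfer (equivalently, one can apply the transfer principle first and the scaling afterward, directly on the $L^2_{t,x}$ estimate, which avoids worrying about how $b$-weights scale). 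Modulo this bookkeeping, the lemma follows immediately from the free bilinear estimates already established, together with the elementary fact $\int_\RR \langle\lambda\rangle^{-2b}\,d\lambda<\infty$ for $b>\tfrac12$.
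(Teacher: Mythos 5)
Your proposal is correct and follows exactly the route the paper intends: the paper gives no written proof of Lemma \ref{xsbbi-1}, stating only that it follows ``from Lemma \ref{bistr}, Lemma \ref{bistrball} and definition of $X^{s,b}$ space,'' which is precisely the transfer principle you spell out, and your scaling bookkeeping for part (2) checks out (the factor $(N/N_2)^{\frac{d-1}{2}}$ combines with the $L^2_{t,x}$ and $L^2_x$ scaling exponents $N_2^{\frac{d-\al}{2}}$ to give $N^{\frac{d-1}{2}}N_2^{\frac{1-\al}{2}}$). No gaps.
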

\noindent Furthermore interpolation with trivial inequality
$\|uv\|_{L_{t,x}^2} \lesssim \|u\|_{L^\infty_{t,x}}\|v\|_{L^2_{t,x}} \lesssim \|u\|_{X^{\frac{d}{2}+,\frac{1}{2}+}}\|v\|_{X^{0,0}}$
yields the following useful lemma.
\begin{lem}\label{xsbbi-2}
Let $d \geq 2$. Then, for given small $\ep>0$, we have
\begin{enumerate}
\item If $\supp \widehat{u} \subset A(N_1)$ and $\supp \widehat{v} \subset A(N_2)$ with $N_1 \leq N_2$, then
\begin{align*}
\|uv\|_{L_{t,x}^2} \lesssim N_1^{\frac{d-1}{2}+2\ep}N_2^{\frac{1-\al}{2}+2\ep} \|u\|_{X^{0,\frac12 + \ep}}\|v\|_{X^{0,\frac12 - \ep}}.
\end{align*}
\item If $\supp \widehat{u} \subset B(\xi_0, N)$ and $\supp \widehat{v} \subset A(N_2)$  with $|\xi_0|\sim N_1$ and $N, N_1 \ll N_2$, then
\begin{align*}
\|uv\|_{L_{t,x}^2} \lesssim N^{\frac{d-1}{2}-\ep_1}N_1^{\ep_2}N_2^{\frac{1-\al}{2}+2\ep} \|u\|_{X^{0,\frac12 + \ep}}\|v\|_{X^{0,\frac12 - \ep}}
\end{align*}
where $\ep_1 = \frac{2(d-1)\ep}{1+2\ep}$ and $\ep_2 = \frac{2(d+2\ep)\ep}{1+2\ep}$ so that $\ep_2 - \ep_1 = 2\ep$.
\end{enumerate}
\end{lem}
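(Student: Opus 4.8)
The plan is to derive both inequalities from Lemma \ref{xsbbi-1} by bilinear interpolation against the elementary bound
\[\|uv\|_{L^2_{t,x}} \lesssim \|u\|_{L^\infty_{t,x}}\|v\|_{L^2_{t,x}} \lesssim \|u\|_{X^{\frac d2 + \ep,\,\frac12 + \ep}}\|v\|_{X^{0,0}}\]
recorded just above (itself a consequence of $H^{\frac12+}_t\hookrightarrow L^\infty_t$ and $H^{\frac d2+}_x\hookrightarrow L^\infty_x$). First I would put the two endpoint estimates into a common form with $u\in X^{0,\frac12+\ep}$: since $u$ is Fourier supported in $A(N_1)$ in case (1), and in $B(\xi_0,N)$ with $|\xi_0|\sim N_1$ (so that $\langle\xi\rangle\sim N_1$ on $\supp\widehat u$) in case (2), one has $\|u\|_{X^{\frac d2+\ep,\,\frac12+\ep}}\sim N_1^{\frac d2+\ep}\|u\|_{X^{0,\frac12+\ep}}$, while Lemma \ref{xsbbi-1} already reads $\|uv\|_{L^2_{t,x}}\lesssim N_1^{\frac{d-1}2}N_2^{\frac{1-\al}2}\|u\|_{X^{0,\frac12+\ep}}\|v\|_{X^{0,\frac12+\ep}}$ in case (1) and the same with $N$ replacing the first $N_1$ in case (2).

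Next, I would freeze $u$ and interpolate the linear map $v\mapsto uv$ between the bound coming from Lemma \ref{xsbbi-1} (mapping $X^{0,\frac12+\ep}\to L^2_{t,x}$) and the elementary bound (mapping $X^{0,0}\to L^2_{t,x}$). Since the $X^{0,b}$ are weighted $L^2$ spaces, complex interpolation gives $[X^{0,0},X^{0,\frac12+\ep}]_\lambda = X^{0,\lambda(\frac12+\ep)}$; choosing $\lambda$ with $\lambda(\frac12+\ep) = \frac12-\ep$, equivalently $1-\lambda = \theta := \frac{4\ep}{1+2\ep}$, places $v$ in exactly $X^{0,\frac12-\ep}$, keeps $u$ in $X^{0,\frac12+\ep}$, raises the bilinear prefactor to the power $1-\theta$, and raises the elementary prefactor to the power $\theta$.

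It then remains to add exponents. In case (1) the power of $N_1$ is $(1-\theta)\tfrac{d-1}2 + \theta(\tfrac d2+\ep) = \tfrac{d-1}2 + \theta(\tfrac12+\ep) = \tfrac{d-1}2 + 2\ep$, using $\theta(\tfrac12+\ep) = \tfrac{4\ep}{1+2\ep}\cdot\tfrac{1+2\ep}2 = 2\ep$, while the power of $N_2$ is $(1-\theta)\tfrac{1-\al}2 = \tfrac{1-\al}2 + \theta\tfrac{\al-1}2 \le \tfrac{1-\al}2 + 2\ep$ since $1<\al\le2$ forces $\tfrac{\al-1}2\le\tfrac12$ and $N_2\gtrsim1$; this is (1). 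In case (2) the elementary endpoint contributes a power of $N_1$ whereas Lemma \ref{xsbbi-1}(2) contributes a power of $N$, so the power of $N$ is $(1-\theta)\tfrac{d-1}2 = \tfrac{d-1}2 - \tfrac{2(d-1)\ep}{1+2\ep} = \tfrac{d-1}2 - \ep_1$, the power of $N_1$ is $\theta(\tfrac d2+\ep) = \tfrac{2(d+2\ep)\ep}{1+2\ep} = \ep_2$ (so indeed $\ep_2-\ep_1 = \tfrac{2\ep(1+2\ep)}{1+2\ep} = 2\ep$), and the power of $N_2$ is again at most $\tfrac{1-\al}2+2\ep$; this is (2). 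I expect no serious obstacle: the only points needing care are the legitimacy of interpolating a bilinear estimate in a single slot — routine once $u$ is frozen, since one then interpolates an honest linear operator between the weighted $L^2$ spaces $X^{0,b}$ — and the identification $\langle\xi\rangle\sim N_1$ on $\supp\widehat u$ in case (2), which is precisely where the hypothesis $|\xi_0|\sim N_1$ enters; note also that the $X^{0,0}$ endpoint is indispensable, since the target time index $\frac12-\ep$ lies below $\frac12$ and hence outside the range of validity of Lemma \ref{xsbbi-1}.
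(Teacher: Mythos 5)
Your proposal is correct and is exactly the argument the paper intends: the paper derives Lemma \ref{xsbbi-2} in one line by interpolating Lemma \ref{xsbbi-1} against the trivial bound $\|uv\|_{L^2_{t,x}}\lesssim \|u\|_{X^{\frac d2+,\frac12+}}\|v\|_{X^{0,0}}$, and your choice $\theta=\frac{4\ep}{1+2\ep}$ together with the exponent bookkeeping (including $\theta(\tfrac12+\ep)=2\ep$ and the harmless loss $N_2^{2\ep}$ from the $N_2$-exponent) reproduces the stated constants, with $\ep_1,\ep_2$ matching in case (2). No gaps; you have simply written out the details the paper leaves implicit.
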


\section{Almost sure local wellposedness}
We will prove Theorem \ref{main thm}. Given $\phi \in H^s(\mathbb{R}^d)$, let $\phi^\omega$ be its randomization. We concern \eqref{main eqn} with $\phi^\omega$. Let $z(t) := U(t)\phi^\omega$ and $v(t) := u(t) - U(t)\phi^\omega$. Then \eqref{main eqn} becomes
\begin{align}\label{main random eqn}\left\{\begin{array}{l}
i\partial_tv = |\nabla|^\alpha v + F(v + z),\;\;
\mbox{in}\;\;\mathbb{R}
\times \mathbb{R}^d,\\
v(0, x) = 0.
\end{array}\right.
\end{align}
By Duhamel's principle, \eqref{main random eqn} is written as integral equation
\[v(t) = \int_0^t U(t-t')F(v+z)(t')dt'.\]
Let $\eta$ be a smooth cutoff function supported on $[-2,2], \eta = 1$ on $[-1,1]$, and let $\eta_T(t) = \eta(t/T)$. Then we have
\[v(t) = \eta_T(t)\int_0^t U(t-t')\eta_T(t')F(\eta_T(t')v+\eta_T(t')z)(t')dt'.\]
So we define $\mathcal{D}$ by
\[\mathcal{D}v(t) = \eta_T(t)\int_0^t U(t-t')\eta_T(t')F(\eta_T(t')v+\eta_T(t')z)(t')dt'.\]
Now it suffices to prove $\mathcal{D}$ has a fixed point in closed subset of $C_tH^s_x([0,T] \times \mathbb{R}^d)$ outside a set of probability $\leq C\exp\big(-\frac{c}{T^\gamma\|\phi\|_{H^s}^2}\big)$. For that purpose, we show contraction inequality (Proposition \ref{cont}) for $\mathcal{D}$. Then exactly same arguments in p.11 of \cite{beohpo1} give Theorem \ref{main eqn}.
\begin{prop}\label{cont}
Given $\phi \in H^s(\mathbb{R}^d)$, let $\phi^\omega$ be its randomization. Then, there exits $\sigma = 1+$, $b = \frac 12+$ and $\theta = 0+$ such that for each small $T \ll 1$ and $R>0$, we have
\begin{align*}
\|Dv\|_{X^{\sigma,b}} &\leq C_1T^{\theta} \big( \|v\|_{X^{\sigma,b}}^3 + R^3\big)\\
\|Dv - Dw\|_{X^{\sigma,b}} &\leq C_2T^{\theta} \big( \|v\|_{X^{\sigma,b}}^2 +\|w\|_{X^{\sigma,b}}^2 + R^2\big)\|v - w\|_{X^{\sigma, b}},
\end{align*}
outside a set of probability at most $C\exp\Big(-c\frac{R^2}{\|\phi\|^2_{H^s}}\Big)$.
\end{prop}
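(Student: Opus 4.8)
The plan is to estimate the Duhamel operator $\mathcal{D}v$ in $X^{\sigma,b}$ by first applying the linear estimate of Lemma \ref{xsb}, which reduces everything to bounding the nonlinearity $F(\eta_T v + \eta_T z) = \mu(|\cdot|^{-2\al}*|\eta_T v + \eta_T z|^2)(\eta_T v + \eta_T z)$ in $X^{\sigma,b-1+\theta}$, gaining a factor $T^\theta$. Since $F$ is cubic, expanding $\eta_T v + \eta_T z$ produces eight terms; by the trilinear structure and symmetry it suffices to control the two extreme cases, the purely deterministic piece $F(\eta_T v)$ and the purely random piece $F(\eta_T z)$, together with the mixed terms, each estimate being of the same flavor. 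The random factors $z = U(t)\phi^\omega$ will be handled by discarding the $X^{s,b}$-norm in favor of the Strichartz-type information: we place $\eta_T z$ in $L^q_t L^r_x$ spaces using Proposition \ref{prbStr-1}, which is where the exceptional set of probability $\leq C\exp(-cR^2/\|\phi\|_{H^s}^2)$ enters, after choosing $R$ so that $\|U(t)f^\omega\|_{L^q_t L^r_x([0,T]\times\RR^d)}\lesssim R$ off that set. The deterministic factors $v$ stay in $X^{\sigma,b}$.

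To bound the nonlinearity I would dualize: $\|F(\cdot)\|_{X^{\sigma,b-1+\theta}}$ is tested against a function $w\in X^{-\sigma,1-b-\theta}$, and since $b-1+\theta<0$ we may instead (by duality of $X^{s,b}$, and using $1-b-\theta>\tfrac12$ only mildly) reduce to a spacetime integral $\int (|\cdot|^{-2\al}*(f_1\bar f_2))\,f_3\,\bar w$. The Hartree kernel $|\xi|^{-2\al+d}$ (the Fourier side of $|x|^{-2\al}$, valid for $0<2\al<d$, which holds since $d\ge 3$ and $\al\le 2$) is then estimated by a Littlewood–Paley/Whitney decomposition of the two interacting frequencies feeding the convolution: write $f_1,f_2$ in dyadic blocks $P_{N_1}, P_{N_2}$ and use that the multiplier $|\xi_1-\xi_2|^{-2\al+d}$ is bounded by $\min(N_1,N_2)^{-2\al+d}$ on a Whitney piece, or when both frequencies are comparable and large, by the actual gap. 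On each block one applies the bilinear estimates Lemma \ref{xsbbi-1} and Lemma \ref{xsbbi-2} to pair $f_1$ with $w$ (or with $f_3$) and $f_2$ with $f_3$, picking up the gains $N_{\min}^{(d-1)/2}N_{\max}^{(1-\al)/2}$; the low-frequency ball case (when $|\xi_0|\ll N_2$) is exactly why part (2) of those lemmas was proved. The role of the frequency decomposition "as in \cite{ccl}" is precisely to separate the low–high interactions in the nonlocal convolution so that Hölder's inequality can be dodged.

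The main obstacle, and the place where the hypothesis $\max(\tfrac{2\al-1}{4\al-3}\cdot\tfrac\al2,\tfrac12)<s<\tfrac\al2$ is consumed, is the bookkeeping of the dyadic sums: after the bilinear estimates one is left with a sum over $N_1,N_2,N_3,M$ (the output frequency) of products of powers of these dyadic parameters, and one must verify that choosing $\sigma=\tfrac\al2+$ (so just above the energy regularity, consistent with conclusion (3) of the theorem) the exponents are negative so the geometric series converge. Concretely, the random inputs $z$ only carry Strichartz norms, i.e. effectively regularity $s$ not $\sigma$, so when two of the three convolution slots are random one must recover the deficit $\sigma - s$ from the off-diagonal gains in Lemma \ref{xsbbi-2}; this is tight and forces the lower bound on $s$. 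The constant $\theta>0$ is extracted from the $T^\theta$ in Lemma \ref{xsb} together with the freedom $b=\tfrac12+$, and one must make sure the $\ep$-losses $\ep_1,\ep_2$ in Lemma \ref{xsbbi-2} are absorbed by the strict inequality $s<\tfrac\al2$. The difference estimate for $\mathcal{D}v-\mathcal{D}w$ is identical, writing the cubic difference as a telescoping sum of three terms each having $v-w$ in one slot and $v,w,$ or $z$ in the others, so no new ideas are needed there.
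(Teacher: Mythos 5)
Your overall architecture is the same as the paper's: reduce via Lemma \ref{xsb} and duality to a spacetime integral, split the cubic nonlinearity into the $v$/$z$ interaction terms, put the random pieces in Strichartz norms via Proposition \ref{prbStr-1} (and Lemma \ref{prbest-2}) to generate the exceptional set, and handle the Hartree convolution by dyadic decomposition so that the bilinear estimates of Lemmas \ref{xsbbi-1}--\ref{xsbbi-2} can be brought to bear, with the final step being a verification that the dyadic exponents are summable under the hypotheses on $s$. However, there are concrete gaps. First, your description of the kernel is wrong: the Fourier symbol of $|x|^{-2\al}$ is $c\,|\xi|^{2\al-d}$ (a \emph{negative} power, singular at $\xi=0$), not $|\xi|^{d-2\al}$, and on the block where the output frequency of $f_1\bar f_2$ is $\sim M$ the multiplier contributes $M^{2\al-d}$. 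In the low--high case this forces $M\sim\max(N_1,N_2)$ and hence a factor $\max(N_1,N_2)^{2\al-d}$, not $\min(N_1,N_2)^{d-2\al}$; in the high--high case $M$ can be arbitrarily small, which is precisely why the paper performs the extra decomposition $|\nabla|^{2\al-d}=\sum_N N^{2\al-d}\psi(|\nabla|/N)$ together with a cube (Whitney-type) decomposition of $z_1,z_2$ at scale $N$. Since the entire proposition reduces to checking signs of dyadic exponents, carrying your factor instead of the correct one would derail the bookkeeping, and it is exactly this bookkeeping that produces the thresholds $s>\frac{2\al-1}{4\al-3}\cdot\frac{\al}{2}$ and $s>\frac12$, which you assert but do not derive.

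Second, you cannot apply Lemmas \ref{xsbbi-1}--\ref{xsbbi-2} directly to expressions of the form $N^{2\al-d}\psi(|\nabla|/N)(f_1\bar f_2)\,f_3\,\bar v_4$: those lemmas bound $\|uv\|_{L^2_{t,x}}$ for genuine products, whereas the localized convolution is a bilinear multiplier that entangles $f_1$ and $f_2$. The paper's device for this --- expanding the compactly supported symbol $\chi(\xi)\,|\xi+\eta|^{2\al-d}\chi(\eta)$ (or its cube-localized analogue) in a Fourier series $\sum_{k,l}C_{k,l}e^{i(k\cdot\xi+l\cdot\eta)}$ with $\sum|C_{k,l}|<\infty$, so that the operator becomes an absolutely convergent sum of honest products of modulated functions $f_1^k f_2^l$ with unchanged $X^{0,b}$ norms --- is missing from your plan, and without it (or an equivalent transference argument) the step ``apply the bilinear estimates on each block'' does not go through.
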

\begin{proof}
We shall only show first estimate, then second estimate can be also proven similarly. By using Lemma \ref{xsb} and duality, we get
\begin{align*}
\|Dv(t)\|_{X^{\sigma, b}} &\lesssim T^\theta \|F(\eta_Tv + \eta_Tz)\|_{X^{\sigma, b-1+\theta}}\\
&= T^\theta \sup_{\|v_4\|_{X^{0, 1-b-\theta}}\leq 1} \Big|\int\int_{\mathbb{R} \times \mathbb{R}^d} \langle \nabla \rangle^\sigma [F(\eta_Tv + \eta_Tz)]v_4 dxdt \Big|.
\end{align*}
So there exist 6 terms to be considered
\begin{flalign*}
&(1)\ \Big|\int\int_{\mathbb{R} \times \mathbb{R}^d} \langle \nabla \rangle^\sigma (|x|^{-2\al}*|\eta_Tv|^2)\eta_Tvv_4 dxdt\Big|&&\\
&(2)\ \Big|\int\int_{\mathbb{R} \times \mathbb{R}^d} \langle \nabla \rangle^\sigma (|x|^{-2\al}*|\eta_Tz|^2)\eta_Tzv_4 dxdt\Big|&&\\
&(3)\ \Big|\int\int_{\mathbb{R} \times \mathbb{R}^d} \langle \nabla \rangle^\sigma (|x|^{-2\al}*(\eta_Tv \overline{\eta_Tz}+\overline{\eta_Tv} \eta_Tz))\eta_Tvv_4 dxdt\Big|&&\\
&(4)\ \Big|\int\int_{\mathbb{R} \times \mathbb{R}^d} \langle \nabla \rangle^\sigma (|x|^{-2\al}*|\eta_Tv|^2)\eta_Tzv_4 dxdt\Big|&&\\
&(5)\ \Big|\int\int_{\mathbb{R} \times \mathbb{R}^d} \langle \nabla \rangle^\sigma (|x|^{-2\al}*(\eta_Tv \overline{\eta_Tz}+\overline{\eta_Tv} \eta_Tz))\eta_Tzv_4 dxdt\Big|&&\\
&(6)\ \Big|\int\int_{\mathbb{R} \times \mathbb{R}^d} \langle \nabla \rangle^\sigma (|x|^{-2\al}*|\eta_Tz|^2)\eta_Tvv_4 dxdt\Big|.&&
\end{flalign*}

We will estimate each term by using Stricharz estimates, Bilinear Strichartz estimates and probabilistic estimates.\\\\
1st Term : $vvv$ term
\begin{align}\label{term1}
\Big|\int\int_{\mathbb{R} \times \mathbb{R}^d} \langle \nabla \rangle^\sigma ((|x|^{-2\al}*|\eta_Tv|^2)\eta_Tv)v_4 dxdt\Big|.
\end{align}

By H\"older inequality, \eqref{term1} is bounded by
\[\|\langle \nabla \rangle^\sigma ((|x|^{-2\al}*(|\eta_Tv|^2)\eta_Tv)\|_{L^{\frac 1{1-\ep}}_tL^{2}_x}\|v_4\|_{L^{\frac {1}{\ep}}_tL^2_x}.\]
for some small positive $\ep$ such that $\ep < \frac {1}{\al}(\sigma - \frac {\al}{2})$.
From Lemma \ref{mass-xsb}, we have
\[\|v_4\|_{L^{\frac 1 \ep}_tL^2_x} \lesssim \|v_4\|_{X^{0,\widetilde{b}(1-2\ep)}} = \|v_4\|_{X^{0,1-b-\theta}}.\]

In order to deal with nonlocal term, we introduce useful lemmas.
\begin{lem}[Lemma A1 $\sim$ Lemma A4 in \cite{ka}]\label{lei}
For any $s \ge 0$ we have
$$
\||\nabla|^s(uv)\|_{L^r} \lesssim \||\nabla|^s
u\|_{L^{r_1}}\|v\|_{L^{q_2}} + \|u\|_{L^{q_1}}\||\nabla|^s v\|_{L^{r_2}},
$$
where $\frac1{r} = \frac1{r_1} + \frac1{q_2} =
\frac1{q_1}+\frac1{r_2},\quad r_i \in (1, \infty),\; q_i \in
(1,\infty],\quad i = 1, 2.$
\end{lem}
\begin{lem}[Lemma 3.2 in \cite{chooz}]\label{frac bound}
For any $0 < \ep_1 < d - 2\al$ we
have
$$
\left\||x|^{-2\al}*(|u|^2)\right\|_{L^\infty} \lesssim
\|u\|_{L^\frac{2d}{d - 2\al - \ep_1}}\|u\|_{L^{\frac{2d}{d - 2\al +\ep_1}}}.
$$
\end{lem}
By using Lemma \ref{lei}, we get
\begin{align*}
\|\langle \nabla \rangle^\sigma ((|x|^{-2\al}*&(|\eta_Tv|^2)\eta_Tv)\|_{L^{\frac 1{1-\ep}}_tL^{2}_x}
\lesssim \||x|^{-2\al}*(|\eta_Tv|^2)\|_{L^{\frac {1}{1-\ep}}_tL^{\infty}_x}\|\langle \nabla \rangle^\sigma \eta_Tv\|_{L^\infty_tL^2_x}\\
&\quad+ \|\langle \nabla \rangle^\sigma(|x|^{-2\al}*|\eta_Tv|^2)\|_{L^{\frac {2}{1-\ep}}_tL^{\frac {2d}{2\al + \al\ep}}_x}\|\eta_Tv\|_{L^{\frac {2}{1-\ep}}_tL^{\frac {2d}{d - 2\al - \al\ep}}_x}.
\end{align*}

Thereafter, from Lemma \ref{str-xsb}, we obtain
\[\|\langle \nabla \rangle^\sigma \eta_Tv\|_{L^\infty_tL^2_x} \lesssim \|v\|_{X^{\sigma, b}}.\]

For $\||x|^{-2\al}*(|\eta_Tv|^2)\|_{L^{\frac {1}{1-\ep}}_tL^{\infty}_x}$, we use Lemma \ref{frac bound} and H\"older inequality
\[\||x|^{-2\al}*(|\eta_Tv|^2)\|_{L^{\frac {1}{1-\ep}}_tL^{\infty}_x} \lesssim \|\eta_Tv\|_{L^{\frac {2}{1-\ep}}_tL^{\frac {2d}{d-2\al-\al\ep}}_x}\|\eta_Tv\|_{L^{\frac {2}{1-\ep}}_tL^{\frac {2d}{d-2\al + \al\ep}}_x}.\]
Then, from Sobolev embedding, we obtain
\begin{align*}
&\|\eta_Tv\|_{L^{\frac {2}{1-\ep}}_tL^{\frac {2d}{d-2\al-\al\ep}}_x} \lesssim \|\langle \nabla \rangle^{\sigma_1}v\|_{L^{\frac {2}{1-\ep}}_tL^{\frac {2d}{d-2(1-\ep)}}_x},\ \|\eta_Tv\|_{L^{\frac {2}{1-\ep}}_tL^{\frac {2d}{d-2\al+\al\ep}}_x} \lesssim \|\langle \nabla \rangle^{\sigma_2}v\|_{L^{\frac {2}{1-\ep}}_tL^{\frac {2d}{d-2(1-\ep)}}_x},
\end{align*}
where $\sigma_1 = \frac {d-2(1-\ep)}{2} - \frac {d-2\al-\al\ep}{2}$ and $\sigma_2 = \frac {d-2(1-\ep)}{2} - \frac {d-2\al + \al\ep}{2}$.
And Lemma \ref{str-xsb} yield
\[\|\langle \nabla \rangle^{\sigma_1}v\|_{L^{\frac {2}{1-\ep}}_tL^{\frac {2d}{d-2(1-\ep)}}_x}, \|\langle \nabla \rangle^{\sigma_2}v\|_{L^{\frac {2}{1-\ep}}_tL^{\frac {2d}{d-2(1-\ep)}}_x} \lesssim \|v\|_{X^{\sigma,b}},\]
because $\ep < \frac {1}{\al}(\sigma - \frac {\al}{2})$ gives $\sigma_2 + (2-\al)\cdot\frac {1-\ep}{2} < \sigma_1 + (2-\al)\cdot\frac {1-\ep}{2} < \sigma$.

For $\|\langle \nabla \rangle^\sigma(|x|^{-2\al}*|\eta_Tv|^2)\|_{L^{\frac {2}{1-\ep}}_tL^{\frac {2d}{2\al + \al\ep}}_x}$, we use fractional integration Theorem
\[\|\langle \nabla \rangle^\sigma(|x|^{-2\al}*|\eta_Tv|^2)\|_{L^{\frac {2}{1-\ep}}_tL^{\frac {2d}{2\al + \al\ep}}_x} \lesssim \|\langle \nabla \rangle^\sigma(|\eta_Tv|^2)\|_{L^{\frac {2}{1-\ep}}_tL^{\frac {2d}{2d - 2\al + \al\ep}}_x}.\]
Then Lemma \ref{lei} and H\"older inequality give
\[\|\langle \nabla \rangle^\sigma(|\eta_Tv|^2)\|_{L^{\frac {2}{1-\ep}}_tL^{\frac {2d}{2d - 2\al + \al\ep}}_x} \lesssim \|\langle \nabla \rangle^\sigma v\|_{L^\infty_tL^2_x} \|v\|_{L^{\frac {2}{1-\ep}}_tL^{\frac {2d}{d - 2\al + \al\ep}}_x}.\]
By using Solobev inequality and Lemma \ref{str-xsb}, we have
\[\|v\|_{L^{\frac {2}{1-\ep}}_tL^{\frac {2d}{d - 2\al + \al\ep}}_x} \lesssim \|\langle \nabla \rangle^{\sigma_2}v\|_{L^{\frac {2}{1-\ep}}_tL^{\frac {2d}{d-2(1-\ep)}}_x} \lesssim \|v\|_{X^{\sigma,b}}.\]

In conclusion, we get $\eqref{term1}$ is bounded by $\|v\|^3_{X^{\sigma,b}}\|v_4\|_{X^{0,1-b-\theta}}$.
\\

In order to handle remaining terms, we make dyadic decomposition and assume Fourier transform of $z_i$, $v_i$ is supported on the set $\{\xi \sim N_i\}$. In dealing with 2nd, 4th and 6th terms, we may assume $N_1 \leq N_2$.
\\\\
2nd Term : $zzz$ term
\begin{align}\label{term2}
\big|\int\int_{\mathbb{R} \times \mathbb{R}^d} \langle \nabla \rangle^\sigma ((|x|^{-2\al}*z_1z_2)z_3)v_4 dxdt\big|.
\end{align}

We consider two cases separately
\begin{enumerate}[\quad i.]
\item $\max(N_1,N_2,N_3) \sim \text{med}(N_1,N_2, N_3)$
\item $\max(N_1,N_2,N_3) \gg \text{med}(N_1,N_2,N_3)$.
\end{enumerate}

Case (2.i) : $\max(N_1,N_2,N_3) \sim \text{med}(N_1,N_2, N_3)$

By H\"older inequality, \eqref{term2} is bounded by
\[\|\langle \nabla \rangle^\sigma ((|x|^{-2\al}*(z_1z_2))z_3)\|_{L^{\frac 1{1-\ep}}_tL^{2}_x}\|v_4\|_{L^{\frac {1}{\ep}}_tL^2_x}\]
for some small positive $\ep$.
From Lemma \ref{mass-xsb}, we have
\[\|v_4\|_{L^{\frac 1 \ep}_tL^2_x} \lesssim \|v_4\|_{X^{0,\widetilde{b}(1-2\ep)}} = \|v_4\|_{X^{0,1-b-\theta}}.\]
And by using Lemma \ref{lei}, we get
\begin{align*}
\|\langle \nabla \rangle^\sigma ((|x|^{-2\al}*&(z_1z_2))z_3)\|_{L^{\frac 1{1-\ep}}_tL^{2}_x}
\lesssim \||x|^{-2\al}*(z_1z_2)\|_{L^{\frac {3}{2-2\ep}}_tL^{\frac {3d}{2\al}}_x}\|\langle \nabla \rangle^\sigma z_3\|_{L^{\frac {3}{1-\ep}}_tL^{\frac {2d}{d - \frac {4\al}{3}}}_x}\\
&\quad+ \|\langle \nabla \rangle^\sigma(|x|^{-2\al}*(z_1z_2))\|_{L^{\frac {3}{2-2\ep}}_tL^{\frac {3d}{2\al}}_x}\|z_3\|_{L^{\frac {3}{1-\ep}}_tL^{\frac {2d}{d - \frac {4\al}{3}}}_x}.
\end{align*}

We first concern term $\||x|^{-2\al}*(z_1z_2)\|_{L^{\frac {3}{2-2\ep}}_tL^{\frac {3d}{2\al}}_x}$. Fractional integration theorem and H\"older inequality yield
\[\||x|^{-2\al}*(z_1z_2)\|_{L^{\frac {3}{2-2\ep}}_tL^{\frac {3d}{2\al}}_x} \lesssim \|z_1z_2\|_{L^{\frac {3}{2-2\ep}}_tL^{\frac {3d}{3d - 4\al}}_x} \lesssim \|z_1\|_{L^{\frac {3}{1-\ep}}_tL^{\frac {2d}{d - \frac {4\al}{3}}}_x}\|z_2\|_{L^{\frac {3}{1-\ep}}_tL^{\frac {2d}{d - \frac {4\al}{3}}}_x}.\]
Then from $\max(N_1,N_2,N_3) \sim \text{med}(N_1,N_2, N_3)$, we obtain
\begin{align*}
&\||x|^{-2\al}*(z_1z_2)\|_{L^{\frac {3}{2-2\ep}}_tL^{\frac {3d}{2\al}}_x}\|\langle \nabla \rangle^\sigma z_3\|_{L^{\frac {3}{1-\ep}}_tL^{\frac {2d}{d - \frac {4\al}{3}}}_x}\\
&\lesssim \|z_1\|_{L^{\frac {3}{1-\ep}}_tL^{\frac {2d}{d - \frac {4\al}{3}}}_x}\|\langle \nabla \rangle^{\frac \sigma2}z_2\|_{L^{\frac {3}{1-\ep}}_tL^{\frac {2d}{d - \frac {4\al}{3}}}_x}\|\langle \nabla \rangle^{\frac \sigma2}z_3\|_{L^{\frac {3}{1-\ep}}_tL^{\frac {2d}{d - \frac {4\al}{3}}}_x}\\
&+\|\langle \nabla \rangle^{\frac \sigma2}z_1\|_{L^{\frac {3}{1-\ep}}_tL^{\frac {2d}{d - \frac {4\al}{3}}}_x}\|\langle \nabla \rangle^{\frac \sigma2}z_2\|_{L^{\frac {3}{1-\ep}}_tL^{\frac {2d}{d - \frac {4\al}{3}}}_x}\|z_3\|_{L^{\frac {3}{1-\ep}}_tL^{\frac {2d}{d - \frac {4\al}{3}}}_x}.
\end{align*}

For $\|\langle \nabla \rangle^\sigma(|x|^{-2\al}*(z_1z_2))\|_{L^{\frac {3}{2-2\ep}}_tL^{\frac {3d}{2\al}}_x}$, we use fractional integration Theorem
\[\|\langle \nabla \rangle^\sigma(|x|^{-2\al}*(z_1z_2))\|_{L^{\frac {3}{2-2\ep}}_tL^{\frac {3d}{2\al}}_x} \lesssim \|\langle \nabla \rangle^\sigma(z_1z_2)\|_{L^{\frac {3}{2-2\ep}}_tL^{\frac {3d}{3d-4\al}}_x}.\]
Then fractional Leibniz rule and H\"older inequality give
\begin{align*}
\|\langle \nabla \rangle^\sigma(z_1z_2)\|_{L^{\frac {3}{2-2\ep}}_tL^{\frac {3d}{3d-4\al}}_x} \lesssim \|\langle \nabla \rangle^\sigma z_1\|_{L^{\frac {3}{1-\ep}}_tL^{\frac {2d}{d - \frac {4\al}{3}}}_x} \|z_2\|_{L^{\frac {3}{1-\ep}}_tL^{\frac {2d}{d - \frac {4\al}{3}}}_x} + \|z_1\|_{L^{\frac {3}{1-\ep}}_tL^{\frac {2d}{d - \frac {4\al}{3}}}_x}\|\langle \nabla \rangle^\sigma z_2\|_{L^{\frac {3}{1-\ep}}_tL^{\frac {2d}{d - \frac {4\al}{3}}}_x}.
\end{align*}
And from $\max(N_1,N_2,N_3) \sim \text{med}(N_1,N_2,N_3)$, we have
\begin{align*}
&\quad \|\langle \nabla \rangle^\sigma(|x|^{-2\al}*(z_1z_2))\|_{L^{\frac {3}{2-2\ep}}_tL^{\frac {3d}{2\al}}_x}\|z_3\|_{L^{\frac {3}{1-\ep}}_tL^{\frac {2d}{d - \frac {4\al}{3}}}_x}\\
&\lesssim \|z_1\|_{L^{\frac {3}{1-\ep}}_tL^{\frac {2d}{d - \frac {4\al}{3}}}_x}\|\langle \nabla \rangle^{\frac \sigma2}z_2\|_{L^{\frac {3}{1-\ep}}_tL^{\frac {2d}{d - \frac {4\al}{3}}}_x}\|\langle \nabla \rangle^{\frac \sigma2}z_3\|_{L^{\frac {3}{1-\ep}}_tL^{\frac {2d}{d - \frac {4\al}{3}}}_x}\\
&+\|\langle \nabla \rangle^{\frac \sigma2}z_1\|_{L^{\frac {3}{1-\ep}}_tL^{\frac {2d}{d - \frac {4\al}{3}}}_x}\|\langle \nabla \rangle^{\frac \sigma2}z_2\|_{L^{\frac {3}{1-\ep}}_tL^{\frac {2d}{d - \frac {4\al}{3}}}_x}\|z_3\|_{L^{\frac {3}{1-\ep}}_tL^{\frac {2d}{d - \frac {4\al}{3}}}_x}.
\end{align*}

Therefore, from Proposition \ref{prbStr-1} and $\frac{\sigma}{2} < s$, we conclude that
\[\sum_{\max(N_1,N_2,N_3) \sim \text{med}(N_1,N_2, N_3)} \eqref{term2} \lesssim R^3 \|v_4\|_{X^{0,1-b-\theta}}\]
outside a set of probability
\[\leq C\exp{\big(-c\frac {R^2}{T^{\frac {2-2\ep}{3}}\|\phi\|^2_{H^s}}\big)}.\]

Case (2.ii) : $\max(N_1,N_2,N_3) \gg \text{med}(N_1,N_2, N_3)$

Since the case of $\max(N_1,N_2,N_3) \sim N_2$ can be similarly handled, we only deal with the case of $\max(N_1,N_2,N_3) \sim N_3$. Then we consider 4 cases separately.
\begin{enumerate}[\qquad a.]
\item $N_4 \sim N_3 \gg N_3^{\frac {\al - 1}{2\al -1}} \gg N_2 \gg N_1$
\item $N_4 \sim N_3 \gg N_3^{\frac {\al - 1}{2\al -1}} \gg N_2 \sim N_1$
\item $N_4 \sim N_3 \gg N_2 \gg N_3^{\frac {\al - 1}{2\al -1}} \gg N_1$
\item $N_4 \sim N_3 \gg N_2, N_1 \gg N_3^{\frac {\al - 1}{2\al -1}}$.
\end{enumerate}

Subcase (2.ii.a) : $N_4 \sim N_3 \gg N_3^{\frac {\al - 1}{2\al -1}} \gg N_2 \gg N_1$

The spatial Fourier support of $z_1z_2$ is contained in $A(2N_2)$. So we note that $|\nabla|^{2\al - d} \sim N_2^{2\al -d}$ on the spatial Fourier support of $z_1z_2$. Then we have
\begin{align*}
\frac {|\nabla|^{2\al - d}}{N_2^{\al - d}} (z_1z_2) = (2\pi)^{-d} \int\int e^{ i x\cdot(\xi + \eta)}
\chi\Big(\frac{\xi}{N_2}\Big)\Big(\frac{|\xi + \eta|}{N_2}\Big)^{2\al - d}\chi\Big(\frac{\eta}{N_2}\Big) \widehat{z_1}(\xi) \widehat{z_2}(\eta) d\xi d\eta,
\end{align*}
where $\chi$ is supported in $B(0,1)$.
Now we take Fourier series expansion for $\Psi(\xi, \eta) = \chi(\xi)|\xi + \eta|^{2\al - d}\chi(\eta)$ on the cube of side length $2\pi$ which contains the support of $\Psi$ to get
\[\chi(\xi)|\xi + \eta|^{2\al - d}\chi(\eta) = \sum_{k,l \in \mathbb{Z}^d} C_{k,l} e^{ i(k\cdot\xi + l\cdot\eta)}\]
with $\sum_{k,l}|C_{k,l}| \leq C$. Then we have the identity \[\frac {|\nabla|^{\al - d}}{N_1^{\al - d}} (z_1z_2) = \sum_{k,l \in \mathbb{Z}^d} C_{k,l}z_1^kz_2^l,\]
where
$
z_1^k = (2\pi)^{-d}\int e^{ix\cdot \xi}e^{i k\cdot\xi}\widehat{z_1}(\xi)d\xi
\text{ and }z_2^l = (2\pi)^{-d}\int e^{i x\cdot \eta}e^{i l\cdot\eta}\widehat{z_2}(\eta)d\eta.
$

So we need to estimate
\begin{align*}
\sum_{k,l \in \mathbb{Z}^d} C_{k,l} N_2^{2\al - d} \int\int_{\mathbb{R} \times \mathbb{R}^d} \Big|\langle \nabla \rangle^\sigma  (z^k_1z^l_2z_3)v_4\Big|dxdt.
\end{align*}
And since $|\xi + \eta|^\al \lesssim |\xi|^\al + |\eta|^\al$, it suffices to deal with
\begin{align*}
N_2^{2\al - d} \int\int_{\mathbb{R} \times \mathbb{R}^d} &\big|  \langle \nabla \rangle^\sigma z^k_{1}z^l_{2}z_3v_4\big|dxdt,\ N_2^{2\al - d} \int\int_{\mathbb{R} \times \mathbb{R}^d} \big|z^k_{1}\langle \nabla \rangle^\sigma z^l_{2}z_3v_4\big|dxdt\\
&\text{ and } N_2^{2\al - d} \int\int_{\mathbb{R} \times \mathbb{R}^d} \big|z^k_{1}z^l_{2}\langle \nabla \rangle^\sigma z_3v_4\big|dxdt.
\end{align*}
Third term will be only considered, because remaining two terms can be handled similarly.

By using H\"older inequality, Lemma \ref{xsbbi-1} and Lemma \ref{xsbbi-2}, we get
\begin{align*}
&N_2^{2\al - d} \int\int_{\mathbb{R} \times \mathbb{R}^d} \big|z^k_{1}z^l_{2}\langle \nabla \rangle^\sigma z_3v_4\big|dxdt \lesssim N_2^{2\al - d} \|z^k_{1}\langle \nabla \rangle^\sigma z_3\|_{L^2_{t,x}}\|z^l_{2}v_4\|_{L^2_{t,x}}\\
&\lesssim N_1^{\frac{d-1}{2}}N_2^{2\al - d + \frac{d-1}{2} + 2\ep}N_3^{1-\al + \sigma + 2\ep}\|z^k_{1}\|_{X^{0,b}}\|z^l_{2}\|_{X^{0,b}}\|z_3\|_{X^{0,b}}\|v_4\|_{X^{0,\frac 12 - \ep}}.
\end{align*}
Since $\|z^k_{1}\|_{X^{0,b}} = \|z_{1}\|_{X^{0,b}}$, $\|z^l_{2}\|_{X^{0,b}} = \|z_{2}\|_{X^{0,b}}$ and $\sum_{k,l}|C_{k,l}| \leq C$, we have
\begin{align*}
&\quad \sum_{k,l \in \mathbb{Z}^d}C_{k,l} N_2^{2\al - d} \int\int_{\mathbb{R} \times \mathbb{R}^d} \big|\langle \nabla \rangle^\sigma  ((z^k_{1}z^l_{2})z_3)v_4\big|dxdt\\
&\lesssim N_1^{\frac{d-1}{2}}N_2^{2\al - d + \frac{d-1}{2} + 2\ep}N_3^{1-\al + \sigma + 2\ep}\|z_{1}\|_{X^{0,b}}\|z_{2}\|_{X^{0,b}}\|z_3\|_{X^{0,b}}\|v_4\|_{X^{0,\frac 12 - \ep}}.
\end{align*}
And by using Bernstein inequality carrying out summation in $N_1$, we get
\begin{align*}
&\quad \sum_{N_1 \ll N_2}\sum_{k,l \in \mathbb{Z}^d}C_{k,l} N_2^{2\al - d} \int\int_{\mathbb{R} \times \mathbb{R}^d} \big|\langle \nabla \rangle^\sigma  ((z^k_{1}z^l_{2})z_3)v_4\big|dxdt\\
&\lesssim N_2^{2\al - 1 - 2s + 2\ep}N_3^{\al - 1 + \sigma - s + 2\ep}\|z\|_{X^{s,b}}\|z_{2}\|_{X^{s,b}}\|z_3\|_{X^{s,b}}\|v_4\|_{X^{0,\frac 12 - \ep}}.
\end{align*}
Since $N_3 \gg N_3^{\frac {\al - 1}{2\al -1}} \gg N_2 \gg N_1$ and $2\al - 1 + s + 2\ep>0$, from summation in $N_2$, we obtain
\begin{align*}
&\quad \sum_{N_3^{\frac {\al - 1}{2\al -1}} \gg N_2 \gg N_1} \eqref{term2} \lesssim N_3^{-2s\frac {\al - 1}{2\al - 1} + \sigma - s + \frac{6\al - 4}{2\al - 1}\ep}\|z\|_{X^{s,b}}\|z\|_{X^{s,b}}\|z_3\|_{X^{s,b}}\|v_4\|_{X^{0,\frac 12 - \ep}}.
\end{align*}
In order to make summation in $N_3$ be finite, the power $-2s\frac {\al - 1}{2\al - 1} + \sigma - s$ should be negative. So we need the condition
\[s > \sigma \frac {2\al - 1}{4\al -3}.\]
After carrying out summation in $N_3$ and applying Lemma \ref{xsb}, we have
\[\sum_{N_4 \sim N_3 \gg N_3^{\frac {\al - 1}{2\al -1}} \gg N_2 \gg N_1} \eqref{term2}  \lesssim T^{0-} \|\phi^\omega\|_{H^s}^3\|v_4\|_{X^{0,\frac 12 - \ep}}.\]

Therefore, from Lemma \ref{prbest-2}, we conclude that
\[\sum_{N_4 \sim N_3 \gg N_3^{\frac {\al - 1}{2\al -1}} \gg N_2 \gg N_1} \eqref{term2} \lesssim T^{0-}R^3\|v_4\|_{X^{0,\frac 12 - \ep}}\]
outside a set of probality
\[C\exp\big(-c\frac {R^2}{\|\phi\|_{H^s}^2}\big).\]

Subcase (2.ii.b) : $N_4 \sim N_3 \gg N_3^{\frac {\al - 1}{2\al -1}} \gg N_2 \sim N_1$.

This case is more delicate because $|\nabla|^{2\al - d}$ might be singular on Fourier support of $z_1z_2$. First we decompose $|\nabla|^{2\al - d}$ such that \[|\nabla|^{2\al - d} = \sum_N N^{2\al - d}\psi(|\nabla|/N),\]
with a cut-off $\psi$ supported in $A(1)$. Here $\psi(|\nabla|)$ is pseudo-differential operator defined by $\psi(|\nabla|)f = \mathcal{F}^{-1}(\psi(|\cdot|)\mathcal{F}f)$. Then we have
\begin{align*}
&\int\int_{\mathbb{R} \times \mathbb{R}^d} \langle \nabla \rangle^\sigma ((|x|^{-2\al}*z_1z_2)z_3)v_4 dxdt = \int\int_{\mathbb{R} \times \mathbb{R}^d} \langle \nabla \rangle^\sigma  (\sum_{-\infty}^{N_2} N^{2\al - d}\psi(|\nabla|/N)(z_1z_2)z_3)v_4dxdt.
\end{align*}
After that we decompose $z_1$ and $z_2$ into functions having Fourier supports in cubes of side length $2^{-2}N$. Let $\{Q\}$ be a collection of essentially disjoint cubes of side length $2^{-2}N$ covering $A(N_2)$. Let us define $z_{iQ}$ by $\widehat{z_{iQ}} = \chi_Q(\xi) \widehat{z_i}$ for $i=1,2$. Then we have $z_i = \sum_Q z_{iQ}$ for $i=1,2$. Since $N_1 \sim N_2$, we may restrict $Q \subset A(N_2)$. So, we have
\begin{align*}
&\quad \int\int_{\mathbb{R} \times \mathbb{R}^d} \big|\langle \nabla \rangle^\sigma  (N^{2\al - d}\psi(|\nabla|/N)(z_1z_2)z_3)v_4\big|dxdt\\
&\lesssim \sum_{Q,Q'}\int\int_{\mathbb{R} \times \mathbb{R}^d} \big|\langle \nabla \rangle^\sigma  (N^{2\al - d}\psi(|\nabla|/N)(z_{1Q}z_{2Q'})z_3)v_4\big|dxdt\\
&= \sum_{dist(Q,-Q') \leq 4N}\int\int_{\mathbb{R} \times \mathbb{R}^d} \big|\langle \nabla \rangle^\sigma  (N^{2\al - d}\psi(|\nabla|/N)(z_{1Q}z_{2Q'})z_3)v_4\big|dxdt.
\end{align*}
Here, the last equality follows from the fact that $\psi(|\nabla|/N)(z_{1Q}z_{2Q'}) = 0$ if $dist(Q,-Q') > 4N$. We observe that
\begin{align*}
&\quad \psi(|\nabla|/N)(z_{1Q}z_{2Q'}) = \int\int e^{ix\cdot(\xi + \eta)} \chi(\xi/N - \xi_0)\psi((\xi + \eta)/N)\chi(\eta/N - \eta_0)\widehat{z_{1Q}}\widehat{z_{2Q'}}d\xi d\eta
\end{align*}
for some $\xi_0, \eta_0 \in \mathbb{R}^d$ and $\chi$ supported in $B(0,1)$. Let us take the Fourier series expansion for $\Psi(\xi, \eta) = \chi(\xi - \xi_0)\psi(\xi + \eta)\chi(\eta - \eta_0)$ on the cube of side length $2\pi$ which contains the support of $\Psi$ to get
\[\chi(\xi - \xi_0)\psi(\xi + \eta)\chi(\eta - \eta_0) = \sum_{k,l \in \mathbb{Z}^d} C_{k,l} e^{i(k\cdot\xi + l\cdot\eta)}\]
with $\sum_{k,l}|C_{k,l}| \leq C$, independent of $\xi_0, \eta_0$. So, we have \[\psi(|\nabla|/N)(z_{1Q}z_{2Q'}) = \sum_{k,l \in \mathbb{Z}^d} C_{k,l}z^k_{1Q}z^l_{2Q'}\]
\text{where}
$
z_{1Q}^k = \int e^{2\pi ix\cdot \xi}e^{2\pi i k\cdot\xi}\widehat{z_{1Q}}(\xi)d\xi
\text{ and }z_{2Q'}^l = \int e^{2\pi i x\cdot \eta}e^{2\pi i l\cdot\eta}\widehat{z_{2Q'}}(\eta)d\eta.
$
Hence we obtain
\begin{align*}
&\quad \sum_{dist(Q,-Q') \leq 4N}\int\int_{\mathbb{R} \times \mathbb{R}^d} \big|\langle \nabla \rangle^\sigma  (N^{2\al - d}\psi(|\nabla|/N)(z_{1Q}z_{2Q'})z_3)v_4\big|dxdt\\
&=  \sum_{dist(Q,-Q') \leq 4N}\int\int_{\mathbb{R} \times \mathbb{R}^d} \big|\langle \nabla \rangle^\sigma  (N^{2\al - d}(\sum_{k,l \in \mathbb{Z}^d}C_{k,l}z^k_{1Q}z^l_{2Q'})z_3)v_4\big|dxdt\\
&\lesssim
\sum_{dist(Q,-Q') \leq 4N} \sum_{k,l \in \mathbb{Z}^d}C_{k,l} N^{2\al - d} \int\int_{\mathbb{R} \times \mathbb{R}^d} \big|\langle \nabla \rangle^\sigma  ((z^k_{1Q}z^l_{2Q'})z_3)v_4\big|dxdt.
\end{align*}

So we need to handle
\begin{align*}
N^{2\al - d} \int\int_{\mathbb{R} \times \mathbb{R}^d} &\big|  \langle \nabla \rangle^\sigma z^k_{1Q}z^l_{2Q'}z_3v_4\big|dxdt,\ N^{2\al - d} \int\int_{\mathbb{R} \times \mathbb{R}^d} \big|z^k_{1Q}\langle \nabla \rangle^\sigma z^l_{2Q'}z_3v_4\big|dxdt\\
&\text{and }N^{2\al - d} \int\int_{\mathbb{R} \times \mathbb{R}^d} \big|z^k_{1Q}z^l_{2Q'}\langle \nabla \rangle^\sigma z_3v_4\big|dxdt.
\end{align*}
Third term will be only considered, because remaining two terms can be handled similarly. By using H\"older inequality, Lemma \ref{xsbbi-1} and Lemma \ref{xsbbi-2}, we get
\begin{align*}
N^{2\al - d} &\int\int_{\mathbb{R} \times \mathbb{R}^d} \big|z^k_{1Q}z^l_{2Q'}\langle \nabla \rangle^\sigma z_3v_4\big|dxdt \lesssim N^{2\al - d} \|z^k_{1Q}\langle \nabla \rangle^\sigma z_3\|_{L^2_{t,x}}\|z^l_{2Q'}v_4\|_{L^2_{t,x}}\\
&\lesssim N^{2\al - 1 - \ep_1}N_2^{\ep_2}N_3^{1 - \al + \sigma + 2\ep}\|z^k_{1Q}\|_{X^{0,b}}\|z^l_{2Q'}\|_{X^{0,b}}\|z_3\|_{X^{0,b}}\|v_4\|_{X^{0,\frac 12 - \ep}}.
\end{align*}
Since $\|z^k_{1Q}\|_{X^{0,b}} = \|z_{1Q}\|_{X^{0,b}}$, $\|z^l_{2Q'}\|_{X^{0,b}} = \|z_{2Q'}\|_{X^{0,b}}$ and $\sum_{k,l}|C_{k,l}| \leq C$, we have
\begin{align*}
&\quad \sum_{k,l \in \mathbb{Z}^d}C_{k,l} N^{2\al - d} \int\int_{\mathbb{R} \times \mathbb{R}^d} \big|z^k_{1Q}z^l_{2Q'}\langle \nabla \rangle^\sigma z_3v_4\big|dxdt\\
&\lesssim N^{2\al - 1 - \ep_1}N_2^{\ep_2}N_3^{1 - \al + \sigma + 2\ep}\|z_{1Q}\|_{X^{0,b}}\|z_{2Q'}\|_{X^{0,b}}\|z_3\|_{X^{0,b}}\|v_4\|_{X^{0,\frac 12 - \ep}}.
\end{align*}
Thereafter we use Cauchy-Schwartz inequality, orthogonality and Bernstein inequality to get
\begin{align*}
&\sum_{dist(Q,-Q') \leq 4N} \sum_{k,l \in \mathbb{Z}^d}C_{k,l} N^{2\al - d} \int\int_{\mathbb{R} \times \mathbb{R}^d} \big|z^k_{1Q}z^l_{2Q'}\langle \nabla \rangle^\sigma z_3v_4\big|dxdt\\
&\lesssim N^{2\al - 1 - \ep_1}N_2^{\ep_2}N_3^{1 - \al + \sigma + 2\ep}\|z_{1}\|_{X^{0,b}}\|z_{2}\|_{X^{0,b}}\|z_3\|_{X^{0,b}}\|v_4\|_{X^{0,\frac 12 - \ep}}\\
&\lesssim N^{2\al - 1 - \ep_1}N_2^{-2s + \ep_2}N_3^{1 - \al + \sigma - s + 2\ep}\|z_{1}\|_{X^{s,b}}\|z_{2}\|_{X^{s,b}}\|z_3\|_{X^{s,b}}\|v_4\|_{X^{0,\frac 12 - \ep}}.
\end{align*}

Then summation in $N \lesssim N_2$ gives
\begin{align*}
&\quad \sum_{N=-\infty}^{N_2} \sum_{dist(Q,-Q') \leq 4N} \sum_{k,l \in \mathbb{Z}^d}C_{k,l} N^{2\al - d} \int\int_{\mathbb{R} \times \mathbb{R}^d} \big|z^k_{1Q}z^l_{2Q'}\langle \nabla \rangle^\sigma z_3v_4\big|dxdt\\
&\lesssim N_2^{2\al - 1 - 2s + 2\ep}N_3^{1 - \al + \sigma - s + 2\ep}\|z_{1}\|_{X^{s,b}}\|z_{2}\|_{X^{s,b}}\|z_3\|_{X^{s,b}}\|v_4\|_{X^{0,\frac 12 - \ep}}.
\end{align*}
Since $N_3 \gg N_3^{\frac {\al - 1}{2\al -1}} \gg N_2 \sim N_1$ and $2\al - 1 - 2s + 2\ep > 0$, we have
\begin{align*}
\sum_{N_3^{\frac {\al - 1}{2\al -1}} \gg N_2 \sim N_1} \sum_{N=-\infty}^{N_2} &\sum_{dist(Q,-Q') \leq 4N} \sum_{k,l \in \mathbb{Z}^d}C_{k,l} N^{2\al - d} \int\int_{\mathbb{R} \times \mathbb{R}^d} \big|\langle \nabla \rangle^\sigma  ((z^k_{1Q}z^l_{2Q'})z_3)v_4\big|dxdt\\
&\lesssim N_3^{-2s\frac {\al - 1}{2\al - 1} + \sigma - s + \frac{6\al - 4}{2\al -1}\ep}\|z\|_{X^{s,b}}\|z\|_{X^{s,b}}\|z_3\|_{X^{s,b}}\|v_4\|_{X^{0,\frac 12 - \ep}}.
\end{align*}
Thus, from Lemma \ref{xsb}, we obtain
\[\lesssim T^{0-}N_3^{-2s\frac {\al - 1}{2\al - 1} + \sigma - s + \frac{6\al - 4}{2\al -1}\ep}\|\phi^\omega\|_{H^s}^2 \|P_{N_3}\phi^\omega\|_{H^s}\|v_4\|_{X^{0,\frac 12 - \ep}}.\]
We can carry out summation in $N_3$, because the power($-2s\frac {\al - 1}{2\al - 1} + \sigma - s$) is negative.

Therefore, by using Lemma \ref{prbest-2}, we conclude that
\[\sum_{N_4 \sim N_3 \gg N_3^{\frac {\al - 1}{2\al -1}} \gg N_2 \sim N_1} \eqref{term2} \lesssim T^{0-}R^3\|v_4\|_{X^{0,\frac 12 - \ep}}\]
outside a set of probality
\[C\exp\big(-c\frac {R^2}{\|\phi\|_{H^s}^2}\big).\]

Subcase (2.ii.c) $N_4 \sim N_3 \gg N_2 \gg N_3^{\frac {\al - 1}{2\al -1}} \gg N_1$

Adopting method in Case (2.ii.a), it suffices to estimate
\begin{align*}
& \sum_{k,l \in \mathbb{Z}^d} C_{k,l} N_2^{2\al - d} \int\int_{\mathbb{R} \times \mathbb{R}^d} \Big|\langle \nabla \rangle^\sigma  (z^k_1z^l_2z_3)v_4\Big|dxdt.
\end{align*}
So we have to handle
\begin{align*}
 N_2^{2\al - d} \int\int_{\mathbb{R} \times \mathbb{R}^d} &\big|  \langle \nabla \rangle^\sigma z^k_{1}z^l_{2}z_3v_4\big|dxdt,\ N_2^{2\al - d} \int\int_{\mathbb{R} \times \mathbb{R}^d} \big|z^k_{1}\langle \nabla \rangle^\sigma z^l_{2}z_3v_4\big|dxdt\\
&\text{and }N_2^{2\al - d} \int\int_{\mathbb{R} \times \mathbb{R}^d} \big|z^k_{1}z^l_{2}\langle \nabla \rangle^\sigma z_3v_4\big|dxdt.
\end{align*}
Third term will be only considered, because remaining two terms can be handled similarly. By using H\"older inequality, Lemma \ref{xsbbi-1} and Lemma \ref{xsbbi-2}, we get
\begin{align*}
N_2^{2\al - d} &\int\int_{\mathbb{R} \times \mathbb{R}^d} \big|z^k_{1}z^l_{2}\langle \nabla \rangle^\sigma z_3v_4\big|dxdt \lesssim N_2^{2\al - d} \|z^l_2\|_{L^4_{t,x}}\|\langle \nabla \rangle^\sigma z_3\|_{L^4_{t,x}}\|z^k_1v_4\|_{L^2_{t,x}}\\
&\lesssim N_2^{2\al - d}N_1^{\frac {d-1}{2} + 2\ep}N_4^{\frac {1-\al}{2} + 2\ep} \|z_1^k\|_{X^{0,b}}\|z_2^l\|_{L^4_{t,x}}\|\langle \nabla \rangle^\sigma z_3\|_{L^4_{t,x}}\|v_4\|_{X^{0,\frac 12 - \ep}}
\end{align*}

Thereafter, from $\sum_{k,l}|C_{k,l}| \leq C$ (with $\|z^k_1\|_{X^{0,b}}=\|z_1\|_{X^{0,b}},\|z^l_2\|_{L^4_{t,x}}=\|z_2\|_{L^4_{t,x}}$), Bernstein inequality and Lemma \ref{xsb}, we obtain
\begin{align*}
&\quad \sum_{k,l \in \mathbb{Z}^d} C_{k,l} N_2^{2\al - d} \int\int_{\mathbb{R} \times \mathbb{R}^d} \big|z^k_{1}z^l_{2}\langle \nabla \rangle^\sigma z_3v_4\big|dxdt\\
&\lesssim T^{0-}N_2^{-s+2\al - d}N_3^{\sigma -s}N_1^{\frac {d-1}{2} + 2\ep -s}N_4^{\frac {1-\al}{2} + 2\ep}\|P_{N_1}\phi^\omega\|_{H^s}\|\langle \nabla \rangle^s z_2\|_{L^4_{t,x}}\|\langle \nabla \rangle^s z_3\|_{L^4_{t,x}}\|v_4\|_{X^{0,\frac 12 - \ep}}.
\end{align*}
Then summation in $N_1$ and $N_2$ yields
\begin{align*}
&\quad \sum_{N_2 = N_3^{\frac {\al - 1}{2\al -1}}}^{N_3}\sum^{N_3^{\frac {\al - 1}{2\al -1}}}_{N_1=1}\sum_{k,l \in \mathbb{Z}^d} C_{k,l} N_2^{2\al - d} \int\int_{\mathbb{R} \times \mathbb{R}^d} \big|z^k_{1}z^l_{2}\langle \nabla \rangle^\sigma z_3v_4\big|dxdt\\
&\lesssim T^{0-}N_3^{-s\big(\frac {4\al - 3}{2\al - 1}\big) + \frac {\al - 1}{2(2\al -1)}(2\al - d) + \sigma + \frac{6\al - 4}{2\al - 1}\ep}\|\phi^\omega\|_{H^s}\|\langle \nabla \rangle^s z_2\|_{L^4_{t,x}}\|\langle \nabla \rangle^s z_3\|_{L^4_{t,x}}\|v_4\|_{X^{0,\frac 12 - \ep}}.
\end{align*}

Hence, from Lemma \ref{prbest-2} and Lemma \ref{prbStr-1}, we conclude that
\[\sum_{N_4 \sim N_3 \gg N_2 \gg N_3^{\frac {\al - 1}{2\al -1}} \gg N_1} \eqref{term2} \lesssim T^{0-}R^3\|v_4\|_{X^{0,\frac 12 - \ep}}\]
outside a set of probability
\[C\exp\big(-c\frac {R^2}{\|\phi\|^2_{H^s}}\big) + C\exp\big(-c\frac {R^2}{T^{\frac 12}\|\phi\|^2_{H^s}}\big).\]

Subcase (2.ii.d) : $N_4 \sim N_3 \gg N_2 \gg N_1 \gg N_3^{\frac {\al - 1}{2\al -1}}$

H\"older inequality and Lemma \ref{mass-xsb} yield $\eqref{term2}$ is bounded by
\begin{align*}
\lesssim \|\langle \nabla \rangle^\sigma ((|x|^{-2\al}*(z_1z_2))z_3)\|_{L^{\frac 1{1-\ep}}_tL^{2}_x}\|v_4\|_{L^{\frac {1}{\ep}}_tL^2_x} \lesssim \|\langle \nabla \rangle^\sigma ((|x|^{-2\al}*(z_1z_2))z_3)\|_{L^{\frac 1{1-\ep}}_tL^{2}_x}\|v_4\|_{X^{0,1-b-\theta}}
\end{align*}
for some small positive $\ep$. Thereafter, from Fractional integration theorem, H\"older inequality and Lemma \ref{lei}, we obtain
\begin{align*}
\eqref{term2} &\lesssim \|\langle \nabla \rangle^{\sigma} z_1\|_{L^{\frac {3}{1-\ep}}_tL^{\frac {2d}{d - \frac {4\al}{3}}}_x}\|z_2\|_{L^{\frac {3}{1-\ep}}_tL^{\frac {2d}{d - \frac {4\al}{3}}}_x}\|z_3\|_{L^{\frac {3}{1-\ep}}_tL^{\frac {2d}{d - \frac {4\al}{3}}}_x}\|v_4\|_{X^{0,1-b-\theta}}\\
&+ \|z_1\|_{L^{\frac {3}{1-\ep}}_tL^{\frac {2d}{d - \frac {4\al}{3}}}_x}\|\langle \nabla \rangle^{\sigma}z_2\|_{L^{\frac {3}{1-\ep}}_tL^{\frac {2d}{d - \frac {4\al}{3}}}_x}\|z_3\|_{L^{\frac {3}{1-\ep}}_tL^{\frac {2d}{d - \frac {4\al}{3}}}_x}\|v_4\|_{X^{0,1-b-\theta}}\\
&+\|z_1\|_{L^{\frac {3}{1-\ep}}_tL^{\frac {2d}{d - \frac {4\al}{3}}}_x}\|z_2\|_{L^{\frac {3}{1-\ep}}_tL^{\frac {2d}{d - \frac {4\al}{3}}}_x}\|\langle \nabla \rangle^\sigma z_3\|_{L^{\frac {3}{1-\ep}}_tL^{\frac {2d}{d - \frac {4\al}{3}}}_x}\|v_4\|_{X^{0,1-b-\theta}}.
\end{align*}
Then we use Bernstein's inequality and carry out summation in $N_1, N_2$ to get
\[\sum_{N_2 = N_1}^{N_3} \sum_{N_1 = N_3^{\frac {\al - 1}{2\al -1}}}^{N_2} \eqref{term2} \lesssim N_3^{\sigma - \frac {4\al - 3}{2\al - 1}s} \|\langle \nabla \rangle^{s}z\|_{L^{\frac {3}{1-\ep}}_tL^{\frac {2d}{d - \frac {4\al}{3}}}_x}^2 \|\langle \nabla \rangle^{s}z_3\|_{L^{\frac {3}{1-\ep}}_tL^{\frac {2d}{d - \frac {4\al}{3}}}_x}\|v_4\|_{X^{0,1-b-\theta}}.\]

Therefore, from Lemma \ref{prbStr-1}, we conclude that
\[\sum_{N_4 \sim N_3 \gg N_2 \gg N_1 \gg N_3^{\frac {\al - 1}{2\al -1}}} \eqref{term2} \lesssim R^3\|v_4\|_{X^{0,1-b-\theta}}\]
outside a set of probability
\[\leq C\exp\big(-c\frac {R^2}{T^{\frac {2-2\ep}{3}}\|\phi\|^2_{H^s}}\big).\]
\noindent
3rd Term : $vzv$ term
\begin{align}\label{term3}
\big|\int\int_{\mathbb{R} \times \mathbb{R}^d} \langle \nabla \rangle^\sigma (|x|^{-2\al}*(v_1z_2)v_3)v_4 dxdt\big|.
\end{align}

We consider two cases separately
\begin{enumerate}[\quad i.]
\item $\max(N_1, N_3) \gtrsim N_2$
\item $\max(N_1, N_3) \ll N_2 \sim N_4$.
\end{enumerate}

Case (3.i) : $\max(N_1, N_3) \gtrsim N_2$

We assume $N_1 \geq N_3$, because the other case can be similarly handled. H\"older inequality, Lemma \ref{lei} and Lemma \ref{mass-xsb} yield
\begin{align*}
\quad \eqref{term3} &\lesssim \|\langle \nabla \rangle^\sigma ((|x|^{-2\al}*(v_1z_2)v_3)\|_{L^{\frac 1{1-\ep}}_tL^{2}_x}\|v_4\|_{L^{\frac {1}{\ep}}_tL^2_x}\\
&\lesssim \||x|^{-2\al}*(v_1z_2)\|_{L^{\frac {1}{1-\ep}}_tL^{\infty}_x}\|\langle \nabla \rangle^\sigma v_3\|_{L^\infty_tL^2_x}\|v_4\|_{X^{0,1-b-\theta}}\\
&\quad+ \|\langle \nabla \rangle^\sigma(|x|^{-2\al}*(v_1z_2))\|_{L^{\frac {2}{1-\ep}}_tL^{\frac {2d}{2\al + \al\ep}}_x}\|v_3\|_{L^{\frac {2}{1-\ep}}_tL^{\frac {2d}{d - 2\al - \al\ep}}_x}\|v_4\|_{X^{0,1-b-\theta}}.
\end{align*}
Thereafter we use Lemma \ref{frac bound}, H\"older inequality, Sobolev embedding and Lemma \ref{lei} to get
\begin{align*}
&\lesssim \big(\|v_1\|^{\frac 12}_{L^{\frac {2}{1-\ep}}_tL^{\frac {2d}{d-2\al-\al\ep}}_x}\|v_1\|^{\frac 12}_{L^{\frac {2}{1-\ep}}_tL^{\frac {2d}{d-2\al + \al\ep}}_x}\|z_2\|^{\frac 12}_{L^{\frac {2}{1-\ep}}_tL^{\frac {2d}{d-2\al-\al\ep}}_x}\|z_2\|^{\frac 12}_{L^{\frac {2}{1-\ep}}_tL^{\frac {2d}{d-2\al + \al\ep}}_x}\\
&\quad +\|\langle \nabla \rangle^\sigma(v_1z_2)\|_{L^{\frac {2}{1-\ep}}_tL^{\frac {2d}{2d - 2\al + \al\ep}}_x}\big)\times\|v_3\|_{X^{\sigma,b}}\|v_4\|_{X^{0,1-b-\theta}}.
\end{align*}
Then Lemma \ref{lei} and Lemma \ref{str-xsb} give
\begin{align*}
&\lesssim \|v_1\|_{X^{\sigma,b}}\|z_2\|^{\frac 12}_{L^{\frac {2}{1-\ep}}_tL^{\frac {2d}{d-2\al-\al\ep}}_x}\|z_2\|^{\frac 12}_{L^{\frac {2}{1-\ep}}_tL^{\frac {2d}{d-2\al + \al\ep}}_x}\|v_3\|_{X^{\sigma,b}}\|v_4\|_{X^{0,1-b-\theta}}\\
&\quad + \|\langle \nabla \rangle^\sigma v_1\|_{L^\infty_tL^2_x} \|z_2\|_{L^{\frac {2}{1-\ep}}_tL^{\frac {2d}{d - 2\al + \al\ep}}_x}\|v_3\|_{X^{\sigma,b}}\|v_4\|_{X^{0,1-b-\theta}}\\
&\quad + \|v_1\|_{L^\infty_tL^2_x} \|\langle \nabla \rangle^\sigma z_2\|_{L^{\frac {2}{1-\ep}}_tL^{\frac {2d}{d - 2\al + \al\ep}}_x}\|v_3\|_{X^{\sigma,b}}\|v_4\|_{X^{0,1-b-\theta}}.
\end{align*}
Since $N_1 \gtrsim N_2$, we have
\begin{align*}
&\quad \|v_1\|_{L^\infty_tL^2_x} \|\langle \nabla \rangle^\sigma z_2\|_{L^{\frac {2}{1-\ep}}_tL^{\frac {2d}{d - 2\al + \al\ep}}_x}\|v_3\|_{X^{\sigma,b}}\|v_4\|_{X^{0,1-b-\theta}}\\
&\lesssim \|\langle \nabla \rangle^\sigma v_1\|_{L^\infty_tL^2_x} \|z_2\|_{L^{\frac {2}{1-\ep}}_tL^{\frac {2d}{d - 2\al + \al\ep}}_x}\|v_3\|_{X^{\sigma,b}}\|v_4\|_{X^{0,1-b-\theta}}.
\end{align*}

Therefore, from Lemma \ref{prbStr-1}, we conclude that
\begin{align*}
\sum_{\max(N_1, N_3) \gtrsim N_2}\eqref{term3} \lesssim R\|v\|_{X^{\sigma,b}}^2\|v_4\|_{X^{0,1-b-\theta}}
\end{align*}
outside a set of probability
\[\leq C\exp\big(-c\frac {R^2}{T^{1-\ep}\|\phi\|_{H^{0+}}}\big).\]

Case (3. ii) : $\max(N_1, N_3) \ll N_2 \sim N_4$.

We assume $N_3 \geq N_1$, because other case can be handled similarly.
As in Case (2.ii.a), we shall deal with
\begin{align*}
&\sum_{k,l \in \mathbb{Z}^d} C_{k,l} N_2^{2\al - d} \int\int_{\mathbb{R} \times \mathbb{R}^d} \Big|\langle \nabla \rangle^\sigma  (v^k_1z^l_2v_3)v_4\Big|dxdt.
\end{align*}
So we need to estimate
\begin{align*}
N_2^{2\al - d} \int\int_{\mathbb{R} \times \mathbb{R}^d} &\big|\langle \nabla \rangle^\sigma v^k_{1}z^l_{2}v_3v_4\big|dxdt,\ N_2^{2\al - d} \int\int_{\mathbb{R} \times \mathbb{R}^d} \big|v^k_{1}\langle \nabla \rangle^\sigma z^l_{2}v_3v_4\big|dxdt\\
&\text{and }N_2^{2\al - d} \int\int_{\mathbb{R} \times \mathbb{R}^d} \big|v^k_{1}z^l_{2}\langle \nabla \rangle^\sigma v_3v_4\big|dxdt.
\end{align*}
We will consider second term only, because remaining two terms can be handled similarly. By using H\"older inequality, Lemma \ref{xsbbi-1} and Lemma \ref{xsbbi-2}, we get
\begin{align*}
N_2^{2\al - d} &\int\int_{\mathbb{R} \times \mathbb{R}^d} \big|v^k_{1}\langle \nabla \rangle^\sigma z^l_{2} v_3v_4\big|dxdt \lesssim N_2^{2\al - d} \|\langle \nabla \rangle^\sigma z^l_2\|_{L^{4}_{t,x}}\|v_3\|_{L^{4}_{t,x}}\|v^k_1v_4\|_{L^2_{t,x}}\\
&\lesssim N_2^{2\al - d}N_1^{\frac {d-1}{2} + 2\ep}N_4^{\frac {1-\al}{2} + 2\ep} \|v^k_1\|_{X^{0,b}}\|\langle \nabla \rangle^\sigma z^l_2\|_{L^{4}_{t,x}}\|v_3\|_{L^{4}_{t,x}}\|v_4\|_{X^{0,\frac 12 - \ep}}
\end{align*}

Thereafter we use $\sum_{k,l}|C_{k,l}| \leq C$ (with $\|v^k_1\|_{X^{0,b}}=\|v_1\|_{X^{0,b}},\|z^l_2\|_{L^{4}_{t,x}}=\|z_2\|_{L^{4}_{t,x}}$), Bernstein inequality and Lemma \ref{str-xsb} to obtain
\begin{align*}
&\quad N_2^{2\al - d}\sum_{k,l \in \mathbb{Z}^d} C_{k,l} \int\int_{\mathbb{R} \times \mathbb{R}^d} \big|v^k_{1}\langle \nabla \rangle^\sigma z^l_{2} v_3v_4\big|dxdt \\
&\lesssim N_2^{\sigma - s + 2\al - d}N_1^{\frac {d-1}{2} + 2\ep - \sigma}N_3^{-\sigma + \frac {d-\al}{4}}N_4^{\frac {1-\al}{2} + 2\ep}\|v_1\|_{X^{\sigma, b}}\|\langle \nabla \rangle^s z_2\|_{L^{4}_{t,x}}\|v_3\|_{X^{\sigma, b}}\|v_4\|_{X^{0,\frac 12 - \ep}}.
\end{align*}
Then we carry out summation in $N_1$
\begin{align*}
&\quad \sum_{N_1=1}^{N_3} N_2^{2\al - d}\sum_{k,l \in \mathbb{Z}^d} C_{k,l} \int\int_{\mathbb{R} \times \mathbb{R}^d} \big|v^k_{1}\langle \nabla \rangle^\sigma z^l_{2} v_3v_4\big|dxdt\\
&\lesssim N_2^{\sigma - s + 2\al - d + \frac {1-\al}{2} + 2\ep}N_3^{\frac {d-1}{2} + 2\ep - \sigma -\sigma + \frac {d-\al}{4}}\|v\|_{X^{\sigma, b}}\|\langle \nabla \rangle^s z_2\|_{L^{4}_{t,x}}\|v_3\|_{X^{\sigma, b}}\|v_4\|_{X^{0,\frac 12 - \ep}}.
\end{align*}
We observe that $-d + 2\al + \frac{1}{2} < s$ gives $\sigma - s + 2\al - d + \frac {1-\al}{2} + 2\ep  < 0,$
when $\sigma - \frac \al 2, \ep$ is sufficiently small. So if $\frac {d-1}{2} - 2\sigma + \frac {d-\al}{4} + 2\ep < 0$, then summation can be carried out over $N_2, N_3$. Otherwise it should be checked that $\sigma - s + 2\al - d + \frac {1-\al}{2} + 2\ep + \frac {d-1}{2} - 2\sigma + \frac {d-\al}{4} + 2\ep < 0$. Actually the following hold
\begin{align*}
&\quad \sigma - s + 2\al - d + \frac {1-\al}{2} + 2\ep + \frac {d-1}{2} - 2\sigma + \frac {d-\al}{4} + 2\ep\\
&= -\sigma -s + \frac 54\al - \frac d4  + 4\ep
< -\big(\frac {6\al - 4}{4\al -3}\big)\sigma + \frac 54\al - \frac d4 + 4\ep (\text{ because } s > \frac {2\al - 1}{4\al -3}\sigma)\\
&< -\big(\frac {6\al - 4}{4\al -3}\big)\sigma + \frac 34\al + 4\ep (\text{ because } d > 2\al) <-\frac {1}{4(4\al - 3)}\al + 4\ep (\text{ because } \sigma > \frac \al2) <0
\end{align*}
when $\ep$ is sufficiently small. Hence we have that
\[\sum_{\max(N_1, N_3) \ll N_2 \sim N_4} \eqref{term3} \lesssim \|v\|_{X^{\sigma, b}}\|\langle \nabla \rangle^s z\|_{L^{4}_{t,x}}\|v\|_{X^{\sigma, b}}\|v_4\|_{X^{0,\frac 12 - \ep}}.\]

Therefore, from Lemma \ref{prbest-3}, we conclude that
\[\sum_{\max(N_1, N_3) \ll N_2 \sim N_4}\eqref{term3} \lesssim R\|v\|_{X^{\sigma, b}}^2\|v_4\|_{X^{0,\frac 12 - \ep}}\]
outside a set of probability \[\leq C\exp\big(-c\frac {R^2}{T^{\frac 12}\|\phi\|_{H^s}^2}\big).\]
\\
\noindent
4th Term : $vvz$ term
\begin{align}\label{term4}
\Big|\int\int_{\mathbb{R} \times \mathbb{R}^d} \langle \nabla \rangle^\sigma ((|x|^{-2\al}*v_1v_2)z_3)v_4 dxdt\Big|.
\end{align}

We consider 3 cases separately
\begin{enumerate}[\quad i.]
\item $N_2 \gtrsim N_3$
\item $N_1 \ll N_2 \ll N_3 \sim N_4$
\item $N_1 \sim N_2 \ll N_3 \sim N_4$.
\end{enumerate}

Case (4.i) : $N_2 \gtrsim N_3$

H\"older inequality and Lemma \ref{mass-xsb} yield
\begin{align*}
\eqref{term4} &\lesssim \|\langle \nabla \rangle^\sigma ((|x|^{-2\al}*(v_1v_2)z_3)\|_{L^{\frac 1{1-\ep}}_tL^{2}_x}\|v_4\|_{L^{\frac {1}{\ep}}_tL^2_x}\\
&\lesssim \||x|^{-2\al}*(v_1v_2)\|_{L^{\frac {2}{1-\ep}}_tL^{\frac {2d}{2\al + \al\ep}}_x}\|\langle \nabla \rangle^\sigma z_3\|_{L^{\frac {2}{1-\ep}}_tL^{\frac {2d}{d - 2\al - \al\ep}}_x}\|v_4\|_{X^{0,1-b-\theta}}\\
&\quad+ \|\langle \nabla \rangle^\sigma(|x|^{-2\al}*(v_1v_2))\|_{L^{\frac {2}{1-\ep}}_tL^{\frac {2d}{2\al + \al\ep}}_x}\|z_3\|_{L^{\frac {2}{1-\ep}}_tL^{\frac {2d}{d - 2\al - \al\ep}}_x}\|v_4\|_{X^{0,1-b-\theta}}
\end{align*}
Then we use Sobolev embedding and H\"older inequality to get
\begin{align*}
&\lesssim \|v_1v_2\|_{L^{\frac {2}{1-\ep}}_tL^{\frac {2d}{2d - 2\al + \al\ep}}_x}\|\langle \nabla \rangle^\sigma z_3\|_{L^{\frac {2}{1-\ep}}_tL^{\frac {2d}{d - 2\al - \al\ep}}_x}\|v_4\|_{X^{0,1-b-\theta}}\\
&\quad+\|\langle \nabla \rangle^\sigma(v_1v_2)\|_{L^{\frac {2}{1-\ep}}_tL^{\frac {2d}{2d - 2\al + \al\ep}}_x}\|z_3\|_{L^{\frac {2}{1-\ep}}_tL^{\frac {2d}{d - 2\al - \al\ep}}_x}\|v_4\|_{X^{0,1-b-\theta}}.
\end{align*}
And Lemma \ref{lei} gives
\begin{align*}
&\lesssim \|v_1\|_{L^{\frac {2}{1-\ep}}_tL^{\frac {2d}{d - 2\al + \al\ep}}_x}\|v_2\|_{L^\infty_tL^2_x}\|\langle \nabla \rangle^\sigma z_3\|_{L^{\frac {2}{1-\ep}}_tL^{\frac {2d}{d - 2\al - \al\ep}}_x}\|v_4\|_{X^{0,1-b-\theta}}\\
&\quad +\|v_1\|_{L^{\frac {2}{1-\ep}}_tL^{\frac {2d}{d - 2\al + \al\ep}}_x}\|\langle \nabla \rangle^\sigma v_2\|_{L^\infty_tL^2_x} \|z_3\|_{L^{\frac {2}{1-\ep}}_tL^{\frac {2d}{d - 2\al - \al\ep}}_x}\|v_4\|_{X^{0,1-b-\theta}}\\
&\quad + \|\langle \nabla \rangle^\sigma v_1\|_{L^\infty_tL^2_x} \|v_2\|_{L^{\frac {2}{1-\ep}}_tL^{\frac {2d}{d - 2\al + \al\ep}}_x}\|z_3\|_{L^{\frac {2}{1-\ep}}_tL^{\frac {2d}{d - 2\al - \al\ep}}_x}\|v_4\|_{X^{0,1-b-\theta}}
\end{align*}

Therefore, from $N_2 \gtrsim N_3$, Lemma \ref{str-xsb} and Lemma \ref{prbStr-1}, we conclude that
\[\sum_{N_2 \gtrsim N_3}\eqref{term4} \lesssim R\|v\|_{X^{\sigma,b}}^2\|v_4\|_{X^{0,1-b-\theta}}\]
outside a set of probability
\[\leq C\exp\big(-c\frac {R^2}{T^{1-\ep}\|\phi\|_{H^{0+}}}\big).\]

Case (4. ii) : $N_1 \ll N_2 \ll N_3 \sim N_4$

Adopting method in Case (2.ii.a), we need to deal with
\begin{align*}
& \sum_{k,l \in \mathbb{Z}^d} C_{k,l} N_2^{2\al - d} \int\int_{\mathbb{R} \times \mathbb{R}^d} \Big|\langle \nabla \rangle^\sigma  (v^k_1v^l_2z_3)v_4\Big|dxdt.
\end{align*}
So we have to estimate
\begin{align*}
N_2^{2\al - d} \int\int_{\mathbb{R} \times \mathbb{R}^d} &\big|  \langle \nabla \rangle^\sigma v^k_{1}v^l_{2}z_3v_4\big|dxdt,\ N_2^{2\al - d} \int\int_{\mathbb{R} \times \mathbb{R}^d} \big|v^k_{1}\langle \nabla \rangle^\sigma v^l_{2}z_3v_4\big|dxdt\\
&\text{and }N_2^{2\al - d} \int\int_{\mathbb{R} \times \mathbb{R}^d} \big|v^k_{1}v^l_{2}\langle \nabla \rangle^\sigma z_3v_4\big|dxdt.
\end{align*}
Third term will be only considered, because remaining two terms can be handled similarly. By using H\"older inequality and Lemma \ref{xsbbi-2}, we get
\begin{align*}
N_2^{2\al - d} &\int\int_{\mathbb{R} \times \mathbb{R}^d} \big|v^k_{1}v^l_{2}\langle \nabla \rangle^\sigma z_3v_4\big|dxdt \lesssim N_2^{2\al - d} \|v^l_2\|_{L^{4}_{t,x}}\|\langle \nabla \rangle^\sigma z_3\|_{L^{4}_{t,x}}\|v^k_1v_4\|_{L^2_{t,x}}\\
&\lesssim N_2^{2\al - d}N_1^{\frac {d-1}{2} + 2\ep}N_4^{\frac {1-\al}{2} + 2\ep} \|v^k_1\|_{X^{0,b}}\| v^l_2\|_{L^{4}_{t,x}}\|\langle \nabla \rangle^\sigma z_3\|_{L^{4}_{t,x}}\|v_4\|_{X^{0,\frac 12 - \ep}}.
\end{align*}
Then $\sum_{k,l}|C_{k,l}| \leq C$ (with $\|v^k_1\|_{X^{0,b}}=\|v_1\|_{X^{0,b}},\|v^l_2\|_{L^{4}_{t,x}}=\|v_2\|_{L^{4}_{t,x}}$), Bernstein inequality and Lemma \ref{str-xsb} yield
\begin{align*}
&\quad \sum_{k,l \in \mathbb{Z}^d}C_{k,l} N_2^{2\al - d} \int\int_{\mathbb{R} \times \mathbb{R}^d} \big|v^k_{1}v^l_{2}\langle \nabla \rangle^\sigma z_3v_4\big|dxdt\\
&\lesssim N_2^{-\sigma + \frac {d-\al}{4} + 2\al - d}N_1^{\frac {d-1}{2} + 2\ep - \sigma}N_3^{\sigma - s}N_4^{\frac {1-\al}{2} + 2\ep}\|v_1\|_{X^{\sigma, b}}\|v_2\|_{X^{\sigma, b}}\|\langle \nabla \rangle^s z_3\|_{L^{4}_{t,x}}\|v_4\|_{X^{0,\frac 12 - \ep}}.
\end{align*}

Thereafter we carry out summation in $N_1$ to get
\begin{align*}
&\quad \sum_{N_1=1}^{N_2}\sum_{k,l \in \mathbb{Z}^d}C_{k,l} N_2^{2\al - d} \int\int_{\mathbb{R} \times \mathbb{R}^d} \big|v^k_{1}v^l_{2}\langle \nabla \rangle^\sigma z_3v_4\big|dxdt\\
&\lesssim N_2^{-2\sigma -\frac{d}{4} + \frac{7}{4}\alpha - \frac{1}{2} + 2\ep}N_3^{\sigma -s + \frac{1-\al}{2} + 2\ep}\|v\|_{X^{\sigma, b}}\|v_2\|_{X^{\sigma, b}}\|\langle \nabla \rangle^s z_3\|_{L^{4}_{t,x}}\|v_4\|_{X^{0,\frac 12 - \ep}}.
\end{align*}
Here we observe that $-2\sigma - \frac{d}{4} + \frac{7}{4}\al - \frac12 + 2\ep < \frac\al4-\frac12 + 2\ep<0$
when $\ep$ is sufficiently small. Hence one can carry out summation over $N_2$. For summation over $N_3$, it is necessary that $\sigma -s + \frac{1-\al}{2} + 2\ep < 0$, which is true when $s > \frac 12$ and $\ep$ is sufficently small.

Therefore, from Lemma \ref{prbStr-1}, we conclude that
\begin{align*}
\sum_{N_1 \ll N_2 \ll N_3 \sim N_4} \eqref{term4} &\lesssim \|v\|_{X^{\sigma, b}}^2\|\langle \nabla \rangle^s z\|_{L^{4}_{t,x}}\|v_4\|_{X^{0,\frac 12 - \ep}}\lesssim R\|v\|_{X^{\sigma, b}}^2\|v_4\|_{X^{0,\frac 12 - \ep}}
\end{align*}
outside a set of probability \[\leq C\exp\big(-c\frac {R^2}{T^{\frac 12}\|\phi\|_{H^s}^2}\big).\]

Case (4.iii) : $N_1 \sim N_2 \ll N_3 \sim N_4$

As in Case (2.ii.b), we consider
\begin{align*}
\sum_{dist(Q,-Q') \leq 4N} \sum_{k,l \in \mathbb{Z}^d}C_{k,l} N^{2\al - d} \int\int_{\mathbb{R} \times \mathbb{R}^d} \big|\langle \nabla \rangle^\sigma  ((v^k_{1Q}v^l_{2Q'})z_3)v_4\big|dxdt.
\end{align*}
So we have to estimate
\begin{align*}
N^{2\al - d} \int\int_{\mathbb{R} \times \mathbb{R}^d} & \big|  \langle \nabla \rangle^\sigma v^k_{1Q}v^l_{2Q'}z_3v_4\big|dxdt,\ N^{2\al - d} \int\int_{\mathbb{R} \times \mathbb{R}^d} \big|v^k_{1Q}\langle \nabla \rangle^\sigma v^l_{2Q'}z_3v_4\big|dxdt\\
&\text{and }N^{2\al - d} \int\int_{\mathbb{R} \times \mathbb{R}^d} \big|v^k_{1Q}v^l_{2Q'}\langle \nabla \rangle^\sigma z_3v_4\big|dxdt.
\end{align*}
Third term will be only considered, because remaining two terms can be handled similarly. By using H\"older inequality, Lemma \ref{xsbbi-1} and Lemma \ref{xsbbi-2}, we get
\begin{align*}
&N^{2\al - d} \int\int_{\mathbb{R} \times \mathbb{R}^d} \big|v^k_{1Q}v^l_{2Q'}\langle \nabla \rangle^\sigma z_3v_4\big|dxdt \lesssim N^{2\al - d} \|v^l_{2Q'} \langle \nabla \rangle^\sigma z_3\|_{L^{2}_{t,x}}\|v^k_{1Q}v_4\|_{L^2_{t,x}}\\
&\lesssim N^{2\al - d}N^{d-1 + 2\ep}N_3^{1-\al + 2\ep} \|v^k_{1Q}\|_{X^{0,b}}\| v^l_{2Q'}\|_{X^{0,b}}\|\langle \nabla \rangle^\sigma z_3\|_{X^{0,b}}\|v_4\|_{X^{0,\frac 12 - \ep}}.
\end{align*}
Since $\sum_{k,l}|C_{k,l}| \leq C$ (with $\|v^k_{1Q}\|_{X^{0,b}} = \|v_{1Q}\|_{X^{0,b}}$, $\|v^l_{2Q'}\|_{X^{0,b}} = \|v_{2Q'}\|_{X^{0,b}}$), we have
\begin{align*}
&\quad \sum_{k,l \in \mathbb{Z}^d}C_{k,l} N^{2\al - d} \int\int_{\mathbb{R} \times \mathbb{R}^d} \big|v^k_{1Q}v^l_{2Q'}\langle \nabla \rangle^\sigma z_3v_4\big|dxdt\\
&\lesssim N^{2\al - d}N^{d-1 + 2\ep}N_3^{1-\al + 2\ep} \|v_{1Q}\|_{X^{0,b}}\| v_{2Q'}\|_{X^{0,b}}\|\langle \nabla \rangle^\sigma z_3\|_{X^{0,b}}\|v_4\|_{X^{0,\frac 12 - \ep}}.
\end{align*}
Thereafter we use Cauchy-Schwartz inequality, orthogonality and Bernstein inequality to obtain
\begin{align*}
&\quad \sum_{dist(Q,-Q') \leq 4N} \sum_{k,l \in \mathbb{Z}^d}C_{k,l} N^{2\al - d} \int\int_{\mathbb{R} \times \mathbb{R}^d} \big|\langle \nabla \rangle^\sigma  ((v^k_{1Q}v^l_{2Q'})z_3)v_4\big|dxdt\\
&\lesssim N^{2\al - d}N^{d-1 + 2\ep}N_3^{1-\al + 2\ep} \|v_{1}\|_{X^{0,b}}\| v_{2}\|_{X^{0,b}}\|\langle \nabla \rangle^\sigma z_3\|_{X^{0,b}}\|v_4\|_{X^{0,\frac 12 - \ep}}\\
&\lesssim N^{2\al-1 + 2\ep}N_2^{-2\sigma}N_3^{1-\al + \sigma - s + 2\ep} \|v_1\|_{X^{\sigma,b}}\|v_2\|_{X^{\sigma,b}}\|\langle \nabla \rangle^s z_3\|_{X^{0,b}}\|v_4\|_{X^{0,\frac 12 - \ep}}.
\end{align*}

Since $N \leq N_1 \sim N_2 \ll N_4$, we have
\begin{align*}
\sum_{N_2=1}^{N_3}\sum_{N=-\infty}^{N_2}&\sum_{dist(Q,-Q') \leq 4N} \sum_{k,l \in \mathbb{Z}^d}C_{k,l} N^{2\al - d} \int\int_{\mathbb{R} \times \mathbb{R}^d} \big|\langle \nabla \rangle^\sigma  ((v^k_{1Q}v^l_{2Q'})z_3)v_4\big|dxdt\\
&\lesssim N_3^{\al - 1 -\sigma -s + 4\ep} \|v\|_{X^{\sigma,b}}\|v\|_{X^{\sigma,b}}\|\langle \nabla \rangle^s z_3\|_{X^{0,b}}\|v_4\|_{X^{0,\frac 12 - \ep}}
\end{align*}
Then, from Lemma \ref{xsb}, we get
\[\lesssim T^{0-}N_3^{\al - 1 -\sigma -s + 4\ep} \|P_{N_3}\phi^\omega\|_{H^s}\|v\|_{X^{\sigma,b}}\|v\|_{X^{\sigma,b}}\|v_4\|_{X^{0,\frac 12 - \ep}}.\]
Because $\al - 1 -\sigma -s + 2\ep_1 <0$, we have finite summation of $N_3$.

Hence, by using Lemma \ref{prbest-1}, we conclude that
\[\sum_{N_1 \sim N_2 \ll N_3 \sim N_4}\eqref{term4} \lesssim T^{0-}R\|v\|_{X^{\sigma,b}}^2\|v_4\|_{X^{0,\frac 12 - \ep}}\] outside a set of probality
\[C\exp\big(-c\frac {R^2}{\|\phi\|_{H^s}^2}\big).\]
\noindent
5th Term : $vzz$ term
\begin{align}\label{term5}
\Big|\int\int_{\mathbb{R} \times \mathbb{R}^d} \langle \nabla \rangle^\sigma (|x|^{-2\al}*(v_1z_2))z_3v_4 dxdt\Big|.
\end{align}

We condiser 4 cases separately
\begin{enumerate}[\quad i.]
\item $N_1 \gtrsim \max(N_2,N_3)$
\item $N_1 \ll N_2 \sim  N_3$
\item $N_1, N_2 \ll N_3$
\item $N_1, N_3 \ll N_2$.
\end{enumerate}

Case (5.i) : $N_1 \gtrsim \max(N_2,N_3)$.

H\"older inequality, Lemma \ref{lei} and Lemma \ref{mass-xsb} give
\begin{align*}
\eqref{term5} &\lesssim \|\langle \nabla \rangle^\sigma ((|x|^{-2\al}*(v_1z_2)z_3)\|_{L^{\frac 1{1-\ep}}_tL^{2}_x}\|v_4\|_{L^{\frac {1}{\ep}}_tL^2_x}\\
&\lesssim \||x|^{-2\al}*(v_1z_2)\|_{L^{\frac {2}{1-\ep}}_tL^{\frac {2d}{2\al + \al\ep}}_x}\|\langle \nabla \rangle^\sigma z_3\|_{L^{\frac {2}{1-\ep}}_tL^{\frac {2d}{d - 2\al - \al\ep}}_x}\|v_4\|_{X^{0,1-b-\theta}}\\
&\quad+ \|\langle \nabla \rangle^\sigma(|x|^{-2\al}*(v_1z_2))\|_{L^{\frac {2}{1-\ep}}_tL^{\frac {2d}{2\al + \al\ep}}_x}\|z_3\|_{L^{\frac {2}{1-\ep}}_tL^{\frac {2d}{d - 2\al - \al\ep}}_x}\|v_4\|_{X^{0,1-b-\theta}}.
\end{align*}
Thereafter we use Sobolev embedding to obtain
\begin{align*}
&\lesssim \|v_1z_2\|_{L^{\frac {2}{1-\ep}}_tL^{\frac {2d}{2d - 2\al + \al\ep}}_x}\|\langle \nabla \rangle^\sigma z_3\|_{L^{\frac {2}{1-\ep}}_tL^{\frac {2d}{d - 2\al - \al\ep}}_x}\|v_4\|_{X^{0,1-b-\theta}}\\
&\quad+\|\langle \nabla \rangle^\sigma(v_1z_2)\|_{L^{\frac {2}{1-\ep}}_tL^{\frac {2d}{2d - 2\al + \al\ep}}_x}\|z_3\|_{L^{\frac {2}{1-\ep}}_tL^{\frac {2d}{d - 2\al - \al\ep}}_x}\|v_4\|_{X^{0,1-b-\theta}}
\end{align*}
Then H\"older inequality and Lemma \ref{lei} yield
\begin{align*}
&\lesssim \|v_1\|_{L^\infty_tL^2_x}\|z_2\|_{L^{\frac {2}{1-\ep}}_tL^{\frac {2d}{d - 2\al + \al\ep}}_x}\|\langle \nabla \rangle^\sigma z_3\|_{L^{\frac {2}{1-\ep}}_tL^{\frac {2d}{d - 2\al - \al\ep}}_x}\|v_4\|_{X^{0,1-b-\theta}}\\
&\quad +\|v_1\|_{L^\infty_tL^2_x} \|\langle \nabla \rangle^\sigma z_2\|_{L^{\frac {2}{1-\ep}}_tL^{\frac {2d}{d - 2\al + \al\ep}}_x} \|z_3\|_{L^{\frac {2}{1-\ep}}_tL^{\frac {2d}{d - 2\al - \al\ep}}_x}\|v_4\|_{X^{0,1-b-\theta}}\\
&\quad + \|\langle \nabla \rangle^\sigma v_1\|_{L^\infty_tL^2_x} \|z_2\|_{L^{\frac {2}{1-\ep}}_tL^{\frac {2d}{d - 2\al + \al\ep}}_x}\|z_3\|_{L^{\frac {2}{1-\ep}}_tL^{\frac {2d}{d - 2\al - \al\ep}}_x}\|v_4\|_{X^{0,1-b-\theta}}.
\end{align*}

Therefore, from Lemma \ref{prbStr-1}, we conclude that
\begin{align*}
\sum_{N_1 \gtrsim \max(N_2,N_3)}\eqref{term5} &\lesssim R^2\|v\|_{X^{\sigma,b}}\|v_4\|_{X^{0,1-b-\theta}}
\end{align*}
outside a set of probability
\[\leq C\exp\big(-c\frac {R^2}{T^{1-\ep}\|\phi\|_{H^{0+}}}\big).\]

Case (5.ii) : $N_1 \ll N_2 \sim  N_3$

We assume $N_4 \geq N_1$, because other case can be handled similarly.
As in Case (2.ii.a), we shall deal with
\begin{align*}
&\sum_{k,l \in \mathbb{Z}^d} C_{k,l} N_2^{2\al - d} \int\int_{\mathbb{R} \times \mathbb{R}^d} \Big|\langle \nabla \rangle^\sigma  (v^k_1z^l_2z_3)v_4\Big|dxdt.
\end{align*}
So we need to handle
\begin{align*}
N_2^{2\al - d} \int\int_{\mathbb{R} \times \mathbb{R}^d} &\big|\langle \nabla \rangle^\sigma v^k_{1}z^l_{2}z_3v_4\big|dxdt,\ N_2^{2\al - d} \int\int_{\mathbb{R} \times \mathbb{R}^d} \big|v^k_{1}\langle \nabla \rangle^\sigma z^l_{2}z_3v_4\big|dxdt\\
&\text{and }N_2^{2\al - d} \int\int_{\mathbb{R} \times \mathbb{R}^d} \big|v^k_{1}z^l_{2}\langle \nabla \rangle^\sigma z_3v_4\big|dxdt.
\end{align*}
Second term will be only considered, because remaining two terms can be handled similarly. By using H\"older inequality and Lemma \ref{xsbbi-2}, we get
\begin{align*}
N_2^{2\al - d} &\int\int_{\mathbb{R} \times \mathbb{R}^d} \big|v^k_{1}\langle \nabla \rangle^\sigma z^l_{2} z_3v_4\big|dxdt \lesssim N_2^{2\al - d} \|\langle \nabla \rangle^\sigma z^l_2\|_{L^{4}_{t,x}}\|z_3\|_{L^{4}_{t,x}}\|v^k_1v_4\|_{L^2_{t,x}}\\
&\lesssim N_2^{2\al - d}N_1^{\frac {d-1}{2} + 2\ep}N_4^{\frac {1-\al}{2} + 2\ep} \|v^k_1\|_{X^{0,b}}\|\langle \nabla \rangle^\sigma z^l_2\|_{L^{4}_{t,x}}\|z_3\|_{L^{4}_{t,x}}\|v_4\|_{X^{0,\frac 12 - \ep}}
\end{align*}
Then from $\sum_{k,l}|C_{k,l}| \leq C$ (with $\|v^k_1\|_{X^{0,b}}=\|v_1\|_{X^{0,b}},\|z^l_2\|_{L^{4}_{t,x}}=\|z_2\|_{L^{4}_{t,x}}$) and Bernstein inequality, we obtain
\begin{align*}
&\quad \sum_{k,l \in \mathbb{Z}^d}C_{k,l} N_2^{2\al - d} \int\int_{\mathbb{R} \times \mathbb{R}^d} \big|v^k_{1}\langle \nabla \rangle^\sigma z^l_{2} z_3v_4\big|dxdt\\
&\lesssim N_2^{\sigma - s + 2\al - d}N_1^{\frac {d-1}{2} + 2\ep - \sigma}N_3^{-s}N_4^{\frac {1-\al}{2} + 2\ep}\|v_1\|_{X^{\sigma, b}}\|\langle \nabla \rangle^s z_2\|_{L^{4}_{t,x}}\|\langle \nabla \rangle^s z_3\|_{L^4_{t,x}}\|v_4\|_{X^{0,\frac 12 - \ep}}\\
&\lesssim N_2^{\sigma - s + 2\al - d}N_1^{\frac {d-1}{2} - \sigma + \frac {1-\al}{2} + 4\ep}N_3^{-s}\|v_1\|_{X^{\sigma, b}}\|\langle \nabla \rangle^s z_2\|_{L^{4}_{t,x}}\|\langle \nabla \rangle^s z_3\|_{L^4_{t,x}}\|v_4\|_{X^{0,\frac 12 - \ep}}.
\end{align*}
Last line follows from $N_1 \leq N_4$.

Thereafter we carray out summation in $N_1$
\begin{align*}
\sum_{N_1=1}^{N_2}\eqref{term5} \lesssim \left\{\begin{array}{c}
N_2^{\frac {3\al - d}{2} - 2s + 4\ep}\|v\|_{X^{\sigma, b}}\|\langle \nabla \rangle^s z_2\|_{L^{4}_{t,x}}\|\langle \nabla \rangle^s z_3\|_{L^4_{t,x}}\|v_4\|_{X^{0,\frac 12 - \ep}}\\
\qquad (\text{ when }\frac {d-1}{2} - \sigma + \frac {1-\al}{2} + 4\ep > 0)\\
N_2^{\sigma - 2s + 2\al - d}\|v\|_{X^{\sigma, b}}\|\langle \nabla \rangle^s z_2\|_{L^{4}_{t,x}}\|\langle \nabla \rangle^s z_3\|_{L^4_{t,x}}\|v_4\|_{X^{0,\frac 12 - \ep}}\\
\qquad (\text{ when }\frac {d-1}{2} - \sigma + \frac {1-\al}{2} + 4\ep \leq 0).
\end{array}\right.
\end{align*}
Since $\frac {3\al - d}{2} - 2s + 4\ep \leq \frac {\al}{2} - 2s + 4\ep <0$ for sufficiently small $\ep$ and $\sigma - 2s + 2\al - d < 0$, we have
\begin{align*}
\sum_{N_1 \ll N_2 \sim  N_3}\eqref{term5} \lesssim \|v\|_{X^{\sigma, b}}\|\langle \nabla \rangle^s z\|_{L^{4}_{t,x}}^2\|v_4\|_{X^{0,\frac 12 - \ep}}.
\end{align*}

Therefore, from Lemma \ref{prbStr-1}, we conclude that
\[\sum_{N_1 \ll N_2 \sim  N_3}\eqref{term5} \lesssim R^2\|v\|_{X^{\sigma, b}}\|v_4\|_{X^{0,\frac 12 - \ep}}\]
outside a set of probability \[\leq C\exp\big(-c\frac {R^2}{T^{\frac 12}\|\phi\|_{H^s}^2}\big).\]

Case (5.iii) : $N_1, N_2 \ll N_3$

We consider 5 cases separately
\begin{enumerate}[\qquad a.]
\item $N_4 \sim N_3 \gg N_3^{\frac {\al - 1}{2\al -1}} \gg N_2 \gg N_1$
\item $N_4 \sim N_3 \gg N_3^{\frac {\al - 1}{2\al -1}} \gg N_2 \sim N_1$
\item $N_4 \sim N_3 \gg N_2 \gg N_3^{\frac {\al - 1}{2\al -1}} \gg N_1$
\item $N_4 \sim N_3 \gg N_1 \gg N_3^{\frac {\al - 1}{2\al -1}} \gg N_2$
\item $N_4 \sim N_3 \gg N_2, N_1 \gg N_3^{\frac {\al - 1}{2\al -1}}$.
\end{enumerate}

Subcase (5.iii.a) : $N_4 \sim N_3 \gg N_3^{\frac {\al - 1}{2\al -1}} \gg N_2 \gg N_1$

Similarly to Case (2.ii.a), we need to estimate
\begin{align*}
&\sum_{k,l \in \mathbb{Z}^d} C_{k,l} N_2^{2\al - d} \int\int_{\mathbb{R} \times \mathbb{R}^d} \Big|\langle \nabla \rangle^\sigma  (v^k_1z^l_2z_3)v_4\Big|dxdt.
\end{align*}
Hence we shall deal with
\begin{align*}
N_2^{2\al - d} \int\int_{\mathbb{R} \times \mathbb{R}^d} &\big|  \langle \nabla \rangle^\sigma v^k_{1}z^l_{2}z_3v_4\big|dxdt,\ N_2^{2\al - d} \int\int_{\mathbb{R} \times \mathbb{R}^d} \big|v^k_{1}\langle \nabla \rangle^\sigma z^l_{2}z_3v_4\big|dxdt\\
&\text{and }N_2^{2\al - d} \int\int_{\mathbb{R} \times \mathbb{R}^d} \big|v^k_{1}z^l_{2}\langle \nabla \rangle^\sigma z_3v_4\big|dxdt.
\end{align*}
Third term will be only considered, because remaining two terms can be handled similarly. By using H\"older inequality, Lemma \ref{xsbbi-1} and Lemma \ref{xsbbi-2}, we have
\begin{align*}
&\quad N_2^{2\al - d} \int\int_{\mathbb{R} \times \mathbb{R}^d} \big|v^k_{1}z^l_{2}\langle \nabla \rangle^\sigma z_3v_4\big|dxdt \lesssim N_2^{2\al - d}\|v^k_1\langle \nabla \rangle^\sigma z_3\|_{L^2_{t,x}}\|z^l_2v_4\|_{L^2_{t,x}}\\
&\lesssim N_2^{2\al - d}N_1^{\frac{d-1}{2}}N_3^{\frac{1-\al}{2}}N_2^{\frac{d-1}{2}+2\ep}N_4^{\frac{1-\al}{2}+2\ep}\|v^k_1\|_{X^{0,b}}\|\langle \nabla \rangle^\sigma z_3\|_{X^{0,b}}\|z^l_2\|_{X^{0,b}}\|v_4\|_{X^{0,\frac{1}{2}-\ep}}.
\end{align*}
Then $\sum_{k,l}|C_{k,l}| \leq C$ (with $\|v^k_1\|_{X^{0,b}} = \|v_1\|_{X^{0,b}}, \|z^l_2\|_{X^{0,b}}=\|z_2\|_{X^{0,b}}$), Bernstein inequality and Lemma \ref{xsb} yield
\begin{align*}
&\quad \sum_{k,l \in \mathbb{Z}^d}C_{k,l} N_2^{2\al - d} \int\int_{\mathbb{R} \times \mathbb{R}^d} \big|v^k_{1}z^l_{2}\langle \nabla \rangle^\sigma z_3v_4\big|dxdt\\
&\lesssim N_1^{\frac {d-1}{2} - \sigma}N_3^{\frac {1-\al}{2} + \sigma - s} N_2^{\frac {d-1}{2} + 2\al - d - s + 2\ep}N_4^{\frac {1-\al}{2} + 2\ep}\|v_1\|_{X^{\sigma,b}}\|z_2\|_{X^{s,b}}\|z_3\|_{X^{s,b}}\|v_4\|_{X^{0,\frac{1}{2}-\ep}}\\
&\lesssim T^{0-} N_1^{\frac {d-1}{2} - \sigma}N_3^{\frac {1-\al}{2} + \sigma - s} N_2^{\frac {d-1}{2} + 2\al - d - s + 2\ep}N_4^{\frac {1-\al}{2} + 2\ep}\|v_1\|_{X^{\sigma,b}}\prod^3_{j=2}\|P_{N_j}\phi^\omega\|_{H^s}\|v_4\|_{X^{0,\frac12 - \ep}}.
\end{align*}

Thereafter we carry out summation in $N_1$, $N_2$
\begin{align*}
&\quad \sum_{N_2=1}^{N_3^{\frac{\al-1}{2\al-1}}}\sum_{N_1 = 1}^{N_2}\sum_{k,l \in \mathbb{Z}^d}C_{k,l} N_2^{2\al - d} \int\int_{\mathbb{R} \times \mathbb{R}^d} \big|v^k_{1}z^l_{2}\langle \nabla \rangle^\sigma z_3v_4\big|dxdt\\
&\lesssim T^{0-} N_3^{\sigma(\frac {\al}{2\al - 1}) - s(\frac{3\al-2}{2\al-1}) + 2\ep(\frac{3\al-2}{2\al-1})}\|v\|_{X^{\sigma,b}}\|\phi^\omega\|_{H^s}\|P_{N_3}\phi^\omega\|_{H^s}\|v_4\|_{X^{0,\frac12 - \ep}}.
\end{align*}
Since $s > \sigma\frac{2\al-1}{4\al -3}>\sigma\frac{\al}{3\al-2}$, summation in $N_3$ can be also carried out.

Therefore, from Lemma \ref{prbest-2}, we conclude that
\[\sum_{N_4 \sim N_3 \gg N_3^{\frac {\al - 1}{2\al -1}} \gg N_2 \gg N_1} \eqref{term5} \lesssim T^{0-}R^2\|v\|_{X^{\sigma,b}}\|v_4\|_{X^{0,\frac12 -2\ep}}\] outside a set of probability \[C\exp(-c\frac{R^2}{\|\phi\|^2_{H^s}}).\]

Subcase (5.iii.b) : $N_4 \sim N_3 \gg N_3^{\frac {\al - 1}{2\al -1}} \gg N_2 \sim N_1$

Similarly to Case (2.ii.b), we consider
\begin{align*}
\sum_{dist(Q,-Q') \leq 4N} \sum_{k,l \in \mathbb{Z}^d}C_{k,l} N^{2\al - d} \int\int_{\mathbb{R} \times \mathbb{R}^d} \big|\langle \nabla \rangle^\sigma  ((v^k_{1Q}z^l_{2Q'})z_3)v_4\big|dxdt.
\end{align*}
So we need to handle
\begin{align*}
N^{2\al - d} \int\int_{\mathbb{R} \times \mathbb{R}^d} &\big|  \langle \nabla \rangle^\sigma v^k_{1Q}z^l_{2Q'}z_3v_4\big|dxdt,\ N^{2\al - d} \int\int_{\mathbb{R} \times \mathbb{R}^d} \big|v^k_{1Q}\langle \nabla \rangle^\sigma z^l_{2Q'}z_3v_4\big|dxdt\\
&\text{and }N^{2\al - d} \int\int_{\mathbb{R} \times \mathbb{R}^d} \big|v^k_{1Q}z^l_{2Q'}\langle \nabla \rangle^\sigma z_3v_4\big|dxdt.
\end{align*}
Third term will be only considered, because remaining two terms can be handled similarly. H\"older inequality, Lemma \ref{xsbbi-1} and Lemma \ref{xsbbi-2} give
\begin{align*}
&N^{2\al - d} \int\int_{\mathbb{R} \times \mathbb{R}^d} \big|v^k_{1Q}z^l_{2Q'}\langle \nabla \rangle^\sigma z_3v_4\big|dxdt \lesssim N^{2\al - d} \|v^k_{1Q} \langle \nabla \rangle^\sigma z_3\|_{L^{2}_{t,x}}\|z^l_{2Q'}v_4\|_{L^2_{t,x}}\\
&\lesssim N^{2\al - d}N^{d-1 + 2\ep}N_3^{1-\al + 2\ep} \|v^k_{1Q}\|_{X^{0,b}}\| z^l_{2Q'}\|_{X^{0,b}}\|\langle \nabla \rangle^\sigma z_3\|_{X^{0,b}}\|v_4\|_{X^{0,\frac 12 - \ep}}.
\end{align*}
And from $\sum_{k,l}|C_{k,l}| \leq C$ (with $\|v^k_{1Q}\|_{X^{0,b}} = \|v_{1Q}\|_{X^{0,b}}$, $\|z^l_{2Q'}\|_{X^{0,b}} = \|z_{2Q'}\|_{X^{0,b}}$), we have
\begin{align*}
&\quad \sum_{k,l \in \mathbb{Z}^d}C_{k,l} \int\int_{\mathbb{R} \times \mathbb{R}^d} \big|v^k_{1Q}z^l_{2Q'}\langle \nabla \rangle^\sigma z_3v_4\big|dxdt\\
&\lesssim N^{2\al - d}N^{d - 1 + 2\ep}N_3^{1 - \al + 2\ep} \|v_{1Q}\|_{X^{0,b}}\| z_{2Q'}\|_{X^{0,b}}\|\langle \nabla \rangle^\sigma z_3\|_{X^{0,b}}\|v_4\|_{X^{0,\frac 12 - \ep}}.
\end{align*}
Then Cauchy-Schwartz inequality, orthogonality and Bernstein inequality yield
\begin{align*}
&\quad \sum_{N=-\infty}^{N_2} \sum_{dist(Q,-Q') \leq 4N} \sum_{k,l \in \mathbb{Z}^d}C_{k,l} N^{2\al - d} \int\int_{\mathbb{R} \times \mathbb{R}^d} \big|\langle \nabla \rangle^\sigma  ((v^k_{1Q}z^l_{2Q'})z_3)v_4\big|dxdt\\
&\lesssim \sum_{N=-\infty}^{N_2} N^{2\al - d}N^{d-1 + 2\ep}N_3^{1-\al + 2\ep} \|v_{1}\|_{X^{0,b}}\| z_{2}\|_{X^{0,b}}\|\langle \nabla \rangle^\sigma z_3\|_{X^{0,b}}\|v_4\|_{X^{0,\frac 12 - \ep}}\\
&\lesssim N_2^{2\al - 1 + 2\ep}N_3^{1-\al + 2\ep} \|v_{1}\|_{X^{0,b}}\| z_{2}\|_{X^{0,b}}\|\langle \nabla \rangle^\sigma z_3\|_{X^{0,b}}\|v_4\|_{X^{0,\frac 12 - \ep}}\\
&\lesssim N_2^{2\al -1 - \sigma - s + 2\ep}N_3^{1-\al + \sigma - s + 2\ep} \|v_{1}\|_{X^{\sigma,b}}\| \langle \nabla \rangle^s z_{2}\|_{X^{0,b}}\|\langle \nabla \rangle^s z_3\|_{X^{0,b}}\|v_4\|_{X^{0,\frac 12 - \ep}}
\end{align*}

Since $N_2 \ll N_3^{\frac {\al - 1}{2\al -1}}$ and $2\al -1 - \sigma - s + \ep_1 >0$, we can carry out summation in $N_2$ so that
\begin{align*}
\sum_{N_2=1}^{N_3^{\frac{\al - 1}{2\al - 1}}} &\sum_{N=-\infty}^{N_2} \sum_{dist(Q,-Q') \leq 4N} \sum_{k,l \in \mathbb{Z}^d}C_{k,l} N^{2\al - d} \int\int_{\mathbb{R} \times \mathbb{R}^d} \big|\langle \nabla \rangle^\sigma  ((v^k_{1Q}z^l_{2Q'})z_3)v_4\big|dxdt\\
&\lesssim N_3^{\sigma(\frac{\al}{2\al-1}) - s(\frac{3\al-2}{2\al -1}) + 2\ep(\frac{3\al-2}{2\al -1})}\|v\|_{X^{\sigma,b}}\| \langle \nabla \rangle^s z\|_{X^{0,b}}\|\langle \nabla \rangle^s z_3\|_{X^{0,b}}\|v_4\|_{X^{0,\frac 12 - \ep}}.
\end{align*}
Then Lemma \ref{xsb} yield
\begin{align*}
\lesssim T^{0-} N_3^{\sigma(\frac {\al}{2\al - 1}) - s(\frac{3\al-2}{2\al-1}) + 2\ep(\frac{3\al-2}{2\al -1})}\|v\|_{X^{\sigma,b}}\|\phi^\omega\|_{H^s}\|P_{N_3}\phi^\omega\|_{H^s}\|v_4\|_{X^{0,\frac12 - \ep}}.
\end{align*}
Since $s > \sigma\frac{2\al-1}{4\al -3}>\sigma\frac{\al}{3\al-2}$, summation in $N_3$ can be carried out.

Therefore, from Lemma \ref{prbest-2}, we conclude that
\[\sum_{N_4 \sim N_3 \gg N_3^{\frac {\al - 1}{2\al -1}} \gg N_2 \sim N_1}\eqref{term5} \lesssim T^{0-}R^2\|v\|_{X^{\sigma,b}}\|v_4\|_{X^{0,\frac12 -2\ep}}\]
outside a set of probability \[C\exp(-c\frac{R^2}{\|\phi\|^2_{H^s}}).\]

Subcase (5.iii.c) : $N_4 \sim N_3 \gg N_2 \gg N_3^{\frac {\al - 1}{2\al -1}} \gg N_1$

As in Case (2.ii.a), we need to estimate
\begin{align*}
&\sum_{k,l \in \mathbb{Z}^d} C_{k,l} N_2^{2\al - d} \int\int_{\mathbb{R} \times \mathbb{R}^d} \Big|\langle \nabla \rangle^\sigma  (v^k_1z^l_2z_3)v_4\Big|dxdt.
\end{align*}
So we have to deal
\begin{align*}
N_2^{2\al - d} \int\int_{\mathbb{R} \times \mathbb{R}^d} &\big|  \langle \nabla \rangle^\sigma v^k_{1}z^l_{2}z_3v_4\big|dxdt,\ N_2^{2\al - d} \int\int_{\mathbb{R} \times \mathbb{R}^d} \big|v^k_{1}\langle \nabla \rangle^\sigma z^l_{2}z_3v_4\big|dxdt\\
&\text{and }N_2^{2\al - d} \int\int_{\mathbb{R} \times \mathbb{R}^d} \big|v^k_{1}z^l_{2}\langle \nabla \rangle^\sigma z_3v_4\big|dxdt.
\end{align*}
Third term will be only considered, because remaining two terms can be handled similarly. By using H\"older inequality and Lemma \ref{xsbbi-2}, we have
\begin{align*}
&\quad N_2^{2\al - d} \int\int_{\mathbb{R} \times \mathbb{R}^d} \big|v^k_{1}z^l_{2}\langle \nabla \rangle^\sigma z_3v_4\big|dxdt \lesssim N_2^{2\al - d} \|z^l_2\|_{L^4_{t,x}}\|\langle \nabla \rangle^\sigma z_3\|_{L^4_{t,x}}\|v^k_1v_4\|_{L^2_{t,x}}\\
&\lesssim N_1^{\frac {d-1}{2} - \sigma + 2\ep}N_3^{\sigma - s}N_2^{2\al - d - s}N_4^{\frac {1-\al}{2} + 2\ep}\|v^k_1\|_{X^{\sigma,b}}\|z^l_2\|_{L^4_{t,x}}\|\langle \nabla \rangle^\sigma z_3\|_{L^4_{t,x}}\|v_4\|_{X^{0,\frac12 - \ep}}.
\end{align*}
Then, from $\sum_{k,l}|C_{k,l}| \leq C$ (with $\|v^k_1\|_{X^{\sigma,b}}=\|v_1\|_{X^{\sigma,b}}, \|z^l_2\|_{L^4_{t,x}}=\|z_2\|_{L^4_{t,x}}$) and Bernstein inequality, we obtain
\begin{align*}
&\quad \sum_{k,l \in \mathbb{Z}^d}C_{k,l} N_2^{2\al - d} \int\int_{\mathbb{R} \times \mathbb{R}^d} \big|v^k_{1}z^l_{2}\langle \nabla \rangle^\sigma z_3v_4\big|dxdt\\
&\lesssim T^{0-} N_1^{\frac {d-1}{2} - \sigma + 2\ep}N_3^{\sigma - s}N_2^{2\al - d - s}N_4^{\frac {1-\al}{2} + 2\ep}\|v_1\|_{X^{\sigma,b}}\prod^3_{j=2}\|\langle \nabla \rangle^s z_j\|_{L^4_{t,x}}\|v_4\|_{X^{0,\frac12 - \ep}}.
\end{align*}

Since $N_4 \sim N_3 \gg N_2 \gg N_3^{\frac {\al - 1}{2\al -1}} \gg N_1$, $\frac {d-1}{2} - \sigma + \ep_1>0$ and $2\al - d - s < 0$, we can carry out summation in $N_1$ and $N_2$ to get
\begin{align*}
&\quad \sum_{N_2 = N_3^{\frac {\al - 1}{2\al -1}}}^{N_3} \sum_{N_1 = 1}^{N_3^{\frac {\al - 1}{2\al -1}}}\sum_{k,l \in \mathbb{Z}^d}C_{k,l} N_2^{2\al - d} \int\int_{\mathbb{R} \times \mathbb{R}^d} \big|v^k_{1}z^l_{2}\langle \nabla \rangle^\sigma z_3v_4\big|dxdt\\
&\lesssim T^{0-} N_3^{\frac{(2\al -d)(\al-1)}{2(2\al-1)} + \sigma(\frac{\al}{2\al-1}) - s(\frac{3\al-2}{2\al-1}) + 2\ep(\frac{3\al-2}{2\al-1})}\|v\|_{X^{\sigma,b}}\|\langle \nabla \rangle^s z\|_{L^4_{t,x}}\|\langle \nabla \rangle^s z_3\|_{L^4_{t,x}}\|v_4\|_{X^{0,\frac12 - \ep}}.
\end{align*}
Since $\frac{(2\al -d)(\al-1)}{2(2\al-1)} + \sigma(\frac{\al}{2\al-1}) - s(\frac{3\al-2}{2\al-1}) + 2\ep(\frac{3\al-2}{2\al-1}) < 0$ for sufficiently small $\ep$, summation in $N_3$ can be also carried out.

Therefore, by using Lemma \ref{prbStr-1}, we conclude that
\[\sum_{N_4 \sim N_3 \gg N_2 \gg N_3^{\frac {\al - 1}{2\al -1}} \gg N_1} \eqref{term5} \lesssim T^{0-}R^2\|v\|_{X^{\sigma,b}}\|v_4\|_{X^{0,\frac12 -2\ep}}\] outside a set of probability \[C\exp(-c\frac{R^2}{T^{\frac12}\|\phi\|^2_{H^s}}).\]

Subcase(5.iii.d) : $N_4 \sim N_3 \gg N_1 \gg N_3^{\frac {\al - 1}{2\al -1}} \gg N_2$

Similarly to Case (2.ii.a), we need to handle
\begin{align*}
&\sum_{k,l \in \mathbb{Z}^d} C_{k,l} N_1^{2\al - d} \int\int_{\mathbb{R} \times \mathbb{R}^d} \Big|\langle \nabla \rangle^\sigma  (v^k_1z^l_2z_3)v_4\Big|dxdt.
\end{align*}
So we shall estimate
\begin{align*}
N_1^{2\al - d} \int\int_{\mathbb{R} \times \mathbb{R}^d} &\big|  \langle \nabla \rangle^\sigma v^k_{1}z^l_{2}z_3v_4\big|dxdt,\ N_1^{2\al - d} \int\int_{\mathbb{R} \times \mathbb{R}^d} \big|v^k_{1}\langle \nabla \rangle^\sigma z^l_{2}z_3v_4\big|dxdt\\
&\text{and }N_1^{2\al - d} \int\int_{\mathbb{R} \times \mathbb{R}^d} \big|v^k_{1}z^l_{2}\langle \nabla \rangle^\sigma z_3v_4\big|dxdt.
\end{align*}
Third term will be only considered, because remaining two terms can be handled similarly. By using H\"older inequality and Lemma \ref{xsbbi-2}, we have
\begin{align*}
&N_1^{2\al - d} \int\int_{\mathbb{R} \times \mathbb{R}^d} \big|v^k_{1}z^l_{2}\langle \nabla \rangle^\sigma z_3v_4\big|dxdt \lesssim N_1^{2\al - d} \|v^k_1\|_{L^q_tL^r_x}\|\langle \nabla \rangle^\sigma z_3\|_{L^{\frac{2q}{q-2}}_tL^{\frac{2r}{r-2}}_x}\|z^l_2v_4\|_{L^2_{t,x}}\\
&\lesssim N_1^{2\al - d}N_2^{\frac {d-1}{2} - s + 2\ep}N_4^{\frac {1-\al}{2} + 2\ep} \|v^k_1\|_{L^q_tL^r_x}\|\langle \nabla \rangle^\sigma z_3\|_{L^{\frac{2q}{q-2}}_tL^{\frac{2r}{r-2}}_x}\|z^l_2\|_{X^{0,b}}\|v_4\|_{X^{0,\frac12 -2\ep}},
\end{align*}
where $q = \frac{2(2-\al)}{3-2\al}$ when $d=3$ and $q=3$ when $d \geq 4$.

Then, from $\sum_{k,l}|C_{k,l}| \leq C$ (with $\|v^k_1\|_{L^q_tL^r_x}=\|v_1\|_{L^q_tL^r_x}, \|z^l_2\|_{X^{0,b}}=\|z_2\|_{X^{0,b}}$) and Lemma \ref{str-xsb}, we obtain
\begin{align*}
&\quad \sum_{k,l \in \mathbb{Z}^d}C_{k,l} N_1^{2\al - d} \int\int_{\mathbb{R} \times \mathbb{R}^d} \big|v^k_{1}z^l_{2}\langle \nabla \rangle^\sigma z_3v_4\big|dxdt\\
&\lesssim N_1^{2\al - d + \frac{2-\al}{q}}N_2^{\frac {d-1}{2} + 2\ep}N_3^{\sigma}N_4^{\frac {1-\al}{2} + 2\ep}\|v_1\|_{X^{0,b}}\|z_2\|_{X^{0,b}}\|\langle \nabla \rangle^\sigma z_3\|_{L^{\frac{2q}{q-2}}_tL^{\frac{2r}{r-2}}_x}\|v_4\|_{X^{0,\frac12 - \ep}}.
\end{align*}
Thereafter we use Bernstein inequalityand Lemma \ref{xsb} to get
\begin{align*}
&\lesssim N_1^{2\al - d + \frac{2-\al}{q}-\sigma}N_2^{\frac {d-1}{2} - s + 2\ep}N_3^{\sigma - s}N_4^{\frac {1-\al}{2} + 2\ep}\|v_1\|_{X^{\sigma,b}}\|z_2\|_{X^{s,b}}\|\langle \nabla \rangle^s z_3\|_{L^{\frac{2q}{q-2}}_tL^{\frac{2r}{r-2}}_x}\|v_4\|_{X^{0,\frac12 - \ep}}\\
&\lesssim T^{0-} N_1^{2\al - d + \frac{2-\al}{q}-\sigma}N_2^{\frac {d-1}{2} - s + 2\ep}N_3^{\sigma - s}N_4^{\frac {1-\al}{2} + 2\ep}\|v_1\|_{X^{\sigma,b}}\|P_{N_2}\phi^\omega\|_{H^s}\|\langle \nabla \rangle^s z_3\|_{L^{\frac{2q}{q-2}}_tL^{\frac{2r}{r-2}}_x}\|v_4\|_{X^{0,\frac12 - \ep}}.
\end{align*}
Since $N_4 \sim N_3 \gg N_1 \gg N_3^{\frac {\al - 1}{2\al -1}} \gg N_2$, we can carry out summation in $N_1$ and $N_2$ as follows
\begin{align*}
&\quad \sum_{N_1 = N_3^{\frac {\al - 1}{2\al -1}}}^{N_3}\sum_{N_2=1}^{N_3^{\frac {\al - 1}{2\al -1}}}N_2^{2\al - d} \int\int_{\mathbb{R} \times \mathbb{R}^d} \big|v^k_{1}z^l_{2}\langle \nabla \rangle^\sigma z_3v_4\big|dxdt\\
&\lesssim T^{0-}N_3^{\frac{\al-1}{2\al-1}(\frac{2-\al}{q}+\al-\frac{d}{2}) + \frac{\al}{2\al-1}\sigma - \frac{3\al-2}{2\al-1}s + 2\frac{3\al-2}{2\al-1}\ep}\|v\|_{X^{\sigma,b}}\|\phi^\omega\|_{H^s}\|\langle \nabla \rangle^s z_3\|_{L^{\frac{2q}{q-2}}_tL^{\frac{2r}{r-2}}_x}\|v_4\|_{X^{0,\frac12 -\ep}}
\end{align*}

After carrying out summation in $N_3$, we apply Lemma \ref{prbest-2} and Lemma \ref{prbStr-1} to get
\[\sum_{N_4 \sim N_3 \gg N_1 \gg N_3^{\frac {\al - 1}{2\al -1}} \gg N_2} \eqref{term5} \lesssim T^{0-}R^2\|v\|_{X^{\sigma,b}}\|v_4\|_{X^{0,\frac12 - \ep}}\] outside a set of probability \[C\exp(-c\frac{R^2}{\|\phi\|^2_{H^s}}) + C\exp(-c\frac{R^2}{T^{\frac{q-2}{q}}\|\phi\|^2_{H^s}}).\]

Subcase(5.iii.e) : $N_4 \sim N_3 \gg N_2, N_1 \gg N_3^{\frac {\al - 1}{2\al -1}}$

From H\"older inequality, Lemma \ref{lei} and Lemma \ref{mass-xsb}, we obtain
\begin{align*}
\eqref{term5} &\lesssim \|\langle \nabla \rangle^\sigma ((|x|^{-2\al}*(v_1z_2)z_3)\|_{L^{\frac 1{1-\ep}}_tL^{2}_x}\|v_4\|_{L^{\frac {1}{\ep}}_tL^2_x}\\
&\lesssim \||x|^{-2\al}*(v_1z_2)\|_{L^{\frac {2}{1-\ep}}_tL^{\frac {2d}{2\al + \al\ep}}_x}\|\langle \nabla \rangle^\sigma z_3\|_{L^{\frac {2}{1-\ep}}_tL^{\frac {2d}{d - 2\al - \al\ep}}_x}\|v_4\|_{X^{0,1-b-\theta}}\\
&\quad+ \|\langle \nabla \rangle^\sigma(|x|^{-2\al}*(v_1z_2))\|_{L^{\frac {2}{1-\ep}}_tL^{\frac {2d}{2\al + \al\ep}}_x}\|z_3\|_{L^{\frac {2}{1-\ep}}_tL^{\frac {2d}{d - 2\al - \al\ep}}_x}\|v_4\|_{X^{0,1-b-\theta}}.
\end{align*}
And by using Sobolev embedding, we have
\begin{align*}
&\lesssim \|v_1z_2\|_{L^{\frac {2}{1-\ep}}_tL^{\frac {2d}{2d - 2\al + \al\ep}}_x}\|\langle \nabla \rangle^\sigma z_3\|_{L^{\frac {2}{1-\ep}}_tL^{\frac {2d}{d - 2\al - \al\ep}}_x}\|v_4\|_{X^{0,1-b-\theta}}\\
&\quad+\|\langle \nabla \rangle^\sigma(v_1z_2)\|_{L^{\frac {2}{1-\ep}}_tL^{\frac {2d}{2d - 2\al + \al\ep}}_x}\|z_3\|_{L^{\frac {2}{1-\ep}}_tL^{\frac {2d}{d - 2\al - \al\ep}}_x}\|v_4\|_{X^{0,1-b-\theta}}.
\end{align*}
Then H\"older inequality and Lemma \ref{lei} yield
\begin{align*}
&\lesssim \|v_1\|_{L^\infty_tL^2_x}\|z_2\|_{L^{\frac {2}{1-\ep}}_tL^{\frac {2d}{d - 2\al + \al\ep}}_x}\|\langle \nabla \rangle^\sigma z_3\|_{L^{\frac {2}{1-\ep}}_tL^{\frac {2d}{d - 2\al - \al\ep}}_x}\|v_4\|_{X^{0,1-b-\theta}}\\
&\quad +\|v_1\|_{L^\infty_tL^2_x} \|\langle \nabla \rangle^\sigma z_2\|_{L^{\frac {2}{1-\ep}}_tL^{\frac {2d}{d - 2\al + \al\ep}}_x} \|z_3\|_{L^{\frac {2}{1-\ep}}_tL^{\frac {2d}{d - 2\al - \al\ep}}_x}\|v_4\|_{X^{0,1-b-\theta}}\\
&\quad + \|\langle \nabla \rangle^\sigma v_1\|_{L^\infty_tL^2_x} \|z_2\|_{L^{\frac {2}{1-\ep}}_tL^{\frac {2d}{d - 2\al + \al\ep}}_x}\|z_3\|_{L^{\frac {2}{1-\ep}}_tL^{\frac {2d}{d - 2\al - \al\ep}}_x}\|v_4\|_{X^{0,1-b-\theta}}
\end{align*}
Thereafter we use Bernstein inequality and Lemma \ref{str-xsb} to get
\begin{align*}
&\lesssim N_1^{-\sigma}N_2^{-s}N_3^{\sigma - s}\|v_1\|_{X^{\sigma,b}}\|\langle \nabla \rangle^s z_2\|_{L^{\frac {2}{1-\ep}}_tL^{\frac {2d}{d - 2\al + \al\ep}}_x}\|\langle \nabla \rangle^s z_3\|_{L^{\frac {2}{1-\ep}}_tL^{\frac {2d}{d - 2\al - \al\ep}}_x}\|v_4\|_{X^{0,1-b-\theta}}
\end{align*}
Now we carry out summation in $N_1$, $N_2$ and $N_3$
\begin{align*}
\sum_{N_4 \sim N_3 \gg N_2, N_1 \gg N_3^{\frac {\al - 1}{2\al -1}}} \eqref{term5} \lesssim \|v\|_{X^{\sigma,b}}\|\langle \nabla \rangle^s z\|_{L^{\frac {2}{1-\ep}}_tL^{\frac {2d}{d - 2\al + \al\ep}}_x}\|\langle \nabla \rangle^s z\|_{L^{\frac {2}{1-\ep}}_tL^{\frac {2d}{d  - 2\al - \al\ep}}_x}\|v_4\|_{X^{0,1-b-\theta}}.
\end{align*}

Therefore, from Lemma \ref{prbStr-1}, we conclude that
\begin{align*}
\sum_{N_4 \sim N_3 \gg N_2, N_1 \gg N_3^{\frac {\al - 1}{2\al -1}}} \eqref{term5} \lesssim  R^2\|v\|_{X^{\sigma,b}}\|v_4\|_{X^{0,1-b-\theta}}
\end{align*}
outside a set of probability
\[\leq C\exp\big(-c\frac {R^2}{T^{1-\ep}\|\phi\|_{H^s}}\big).\]
\noindent
6th Term : $zzv$ term
\begin{align}\label{term6}
\Big|\int\int_{\mathbb{R} \times \mathbb{R}^d} \langle \nabla \rangle^\sigma (|x|^{-2\al}*z_1z_2)v_3v_4 dxdt\Big|.
\end{align}

We consider 3 cases separately
\begin{enumerate}[\quad i.]
\item $N_3 \gtrsim N_2$
\item $N_3 \ll N_2 \sim N_1$
\item $N_3, N_1 \ll N_2$.
\end{enumerate}

Case (6.i) : $N_3 \gtrsim N_2$.

H\"older inequality and Lemma \ref{mass-xsb} yield that $\eqref{term6}$ is bounded by
\begin{align*}
\|\langle \nabla \rangle^\sigma ((|x|^{-2\al}*(z_1z_2)v_3)\|_{L^{\frac 1{1-\ep}}_tL^{2}_x}\|v_4\|_{L^{\frac {1}{\ep}}_tL^2_x} \lesssim \|\langle \nabla \rangle^\sigma ((|x|^{-2\al}*(z_1z_2)v_3)\|_{L^{\frac 1{1-\ep}}_tL^{2}_x}\|v_4\|_{X^{0,1-b-\theta}},
\end{align*}
for some small $\ep$ such that $0< \ep < \frac {1}{\al}(\sigma - \frac {\al}{2})$.
Then we use Lemma \ref{lei} and Lemma \ref{str-xsb} to get
\begin{align*}
&\qquad \|\langle \nabla \rangle^\sigma ((|x|^{-2\al}*(|\eta_Tv|^2)\eta_Tv)\|_{L^{\frac 1{1-\ep}}_tL^{2}_x}\\
&\lesssim \||x|^{-2\al}*(z_1z_2)\|_{L^{\frac {1}{1-\ep}}_tL^{\infty}_x}\|\langle \nabla \rangle^\sigma v_3\|_{L^\infty_tL^2_x} + \|\langle \nabla \rangle^\sigma (|x|^{-2\al}*(z_1z_2))\|_{L^{\frac {1}{1-\ep}}_tL^{\infty}_x}\|v_3\|_{L^\infty_tL^2_x}\\
&\lesssim \||x|^{-2\al}*(z_1z_2)\|_{L^{\frac {1}{1-\ep}}_tL^{\infty}_x}\|v_3\|_{X^{\sigma, b}} + \|\langle \nabla \rangle^\sigma (|x|^{-2\al}*(z_1z_2))\|_{L^{\frac {1}{1-\ep}}_tL^{\infty}_x}\|v_3\|_{X^{0, b}}.
\end{align*}
Thereafter, from Lemma \ref{frac bound} and H\"older inequality, we obtain
\begin{align*}
&\qquad \||x|^{-2\al}*(z_1z_2)\|_{L^{\frac {1}{1-\ep}}_tL^{\infty}_x} \lesssim \|z_1\|_{L^{\frac {2}{1-\ep}}_tL^{\frac {2d}{d-2\al-\al\ep}}_x}\|z_2\|_{L^{\frac {2}{1-\ep}}_tL^{\frac {2d}{d-2\al + \al\ep}}_x}
\end{align*}
and
\begin{align*}
&\qquad \|\langle \nabla \rangle^\sigma(|x|^{-2\al}*(z_1z_2))\|_{L^{\frac {1}{1-\ep}}_tL^{\infty}_x} = \||x|^{-2\al}*(\langle \nabla \rangle^\sigma(z_1z_2))\|_{L^{\frac {1}{1-\ep}}_tL^{\infty}_x}\\
&\lesssim \|\langle \nabla \rangle^\sigma(z_1z_2)\|^{\frac12}_{L^{\frac {1}{1-\ep}}_tL^{\frac {d}{d-2\al-\al\ep}}_x}\|\langle \nabla \rangle^\sigma(z_1z_2)\|^{\frac12}_{L^{\frac {1}{1-\ep}}_tL^{\frac {d}{d-2\al + \al\ep}}_x}\\
&\lesssim (\|\langle \nabla \rangle^\sigma z_1\|^{\frac12}_{L^{\frac {2}{1-\ep}}_tL^{\frac {2d}{d-2\al-\al\ep}}_x}\|z_2\|^{\frac12}_{L^{\frac {2}{1-\ep}}_tL^{\frac {2d}{d-2\al - \al\ep}}_x} + \|z_1\|^{\frac12}_{L^{\frac {2}{1-\ep}}_tL^{\frac {2d}{d-2\al-\al\ep}}_x}\|\langle \nabla \rangle^\sigma z_2\|^{\frac12}_{L^{\frac {2}{1-\ep}}_tL^{\frac {2d}{d-2\al - \al\ep}}_x})\\
&\times (\|\langle \nabla \rangle^\sigma z_1\|^{\frac12}_{L^{\frac {2}{1-\ep}}_tL^{\frac {2d}{d-2\al+\al\ep}}_x}\|z_2\|^{\frac12}_{L^{\frac {2}{1-\ep}}_tL^{\frac {2d}{d-2\al + \al\ep}}_x} + \|z_1\|^{\frac12}_{L^{\frac {2}{1-\ep}}_tL^{\frac {2d}{d-2\al+\al\ep}}_x}\|\langle \nabla \rangle^\sigma z_2\|^{\frac12}_{L^{\frac {2}{1-\ep}}_tL^{\frac {2d}{d-2\al + \al\ep}}_x}).
\end{align*}
Now we use Bernstein inequality and $N_3 \gtrsim N_2$ to get
\begin{align*}
&\quad \|\langle \nabla \rangle^\sigma(|x|^{-2\al}*(z_1z_2))\|_{L^{\frac {1}{1-\ep}}_tL^{\infty}_x} \\
&\lesssim N_3^\sigma \|z_1\|^{\frac12}_{L^{\frac {2}{1-\ep}}_tL^{\frac {2d}{d-2\al-\al\ep}}_x}\|z_2\|^{\frac12}_{L^{\frac {2}{1-\ep}}_tL^{\frac {2d}{d-2\al - \al\ep}}_x} \|z_1\|^{\frac12}_{L^{\frac {2}{1-\ep}}_tL^{\frac {2d}{d-2\al+\al\ep}}_x}\|z_2\|^{\frac12}_{L^{\frac {2}{1-\ep}}_tL^{\frac {2d}{d-2\al + \al\ep}}_x}.
\end{align*}

Therefore, from Lemma \ref{prbStr-1}, we conclude that
\begin{align*}
\sum_{N_3 \gtrsim N_2}\eqref{term6} &\lesssim R^2\|v\|_{X^{\sigma,b}}\|v_4\|_{X^{0,1-b-\theta}}
\end{align*}
outside a set of probability
\[\leq C\exp\big(-c\frac {R^2}{T^{1-\ep}\|\phi\|_{H^{0+}}}\big).\]

Case (6.ii) : $N_3 \ll N_2 \sim  N_1$

We assume $N_4 \geq N_3$, because other case can be handle similarly.

From H\"older inequality and Lemma \ref{mass-xsb}, we get
\begin{align*}
\eqref{term6} &\lesssim \|\langle \nabla \rangle^\sigma ((|x|^{-2\al}*(z_1z_2)v_3)\|_{L^{\frac 1{1-\ep}}_tL^{2}_x}\|v_4\|_{L^{\frac {1}{\ep}}_tL^2_x}\\
&\lesssim \|\langle \nabla \rangle^\sigma ((|x|^{-2\al}*(z_1z_2)v_3)\|_{L^{\frac 1{1-\ep}}_tL^{2}_x}\|v_4\|_{X^{0,1-b-\theta}},
\end{align*}
for some small positive $\ep$ such that $\ep < \frac {1}{\al}(\sigma - \frac {\al}{2})$.
Then Lemma \ref{lei} and Lemma \ref{str-xsb} give
\begin{align*}
&\qquad \|\langle \nabla \rangle^\sigma ((|x|^{-2\al}*(|\eta_Tv|^2)\eta_Tv)\|_{L^{\frac 1{1-\ep}}_tL^{2}_x}\\
&\lesssim \||x|^{-2\al}*(z_1z_2)\|_{L^{\frac {1}{1-\ep}}_tL^{\infty}_x}\|\langle \nabla \rangle^\sigma v_3\|_{L^\infty_tL^2_x} + \|\langle \nabla \rangle^\sigma (|x|^{-2\al}*(z_1z_2))\|_{L^{\frac {1}{1-\ep}}_tL^{\infty}_x}\|v_3\|_{L^\infty_tL^2_x}\\
&\lesssim \||x|^{-2\al}*(z_1z_2)\|_{L^{\frac {1}{1-\ep}}_tL^{\infty}_x}\|v_3\|_{X^{\sigma, b}} + \|\langle \nabla \rangle^\sigma (|x|^{-2\al}*(z_1z_2))\|_{L^{\frac {1}{1-\ep}}_tL^{\infty}_x}\|v_3\|_{X^{0, b}}.
\end{align*}
Thereafter we use Lemma \ref{frac bound} and H\"older inequality to get
\begin{align*}
&\qquad \||x|^{-2\al}*(z_1z_2)\|_{L^{\frac {1}{1-\ep}}_tL^{\infty}_x} \lesssim \|z_1\|_{L^{\frac {2}{1-\ep}}_tL^{\frac {2d}{d-2\al-\al\ep}}_x}\|z_2\|_{L^{\frac {2}{1-\ep}}_tL^{\frac {2d}{d-2\al + \al\ep}}_x}
\end{align*}
and
\begin{align*}
&\qquad \|\langle \nabla \rangle^\sigma(|x|^{-2\al}*(z_1z_2))\|_{L^{\frac {1}{1-\ep}}_tL^{\infty}_x} = \||x|^{-2\al}*(\langle \nabla \rangle^\sigma(z_1z_2))\|_{L^{\frac {1}{1-\ep}}_tL^{\infty}_x}\\
&\lesssim \|\langle \nabla \rangle^\sigma(z_1z_2)\|^{\frac12}_{L^{\frac {1}{1-\ep}}_tL^{\frac {d}{d-2\al-\al\ep}}_x}\|\langle \nabla \rangle^\sigma(z_1z_2)\|^{\frac12}_{L^{\frac {1}{1-\ep}}_tL^{\frac {d}{d-2\al + \al\ep}}_x}\\
&\lesssim (\|\langle \nabla \rangle^\sigma z_1\|^{\frac12}_{L^{\frac {2}{1-\ep}}_tL^{\frac {2d}{d-2\al-\al\ep}}_x}\|z_2\|^{\frac12}_{L^{\frac {2}{1-\ep}}_tL^{\frac {2d}{d-2\al - \al\ep}}_x} + \|z_1\|^{\frac12}_{L^{\frac {2}{1-\ep}}_tL^{\frac {2d}{d-2\al-\al\ep}}_x}\|\langle \nabla \rangle^\sigma z_2\|^{\frac12}_{L^{\frac {2}{1-\ep}}_tL^{\frac {2d}{d-2\al - \al\ep}}_x})\\
&\times (\|\langle \nabla \rangle^\sigma z_1\|^{\frac12}_{L^{\frac {2}{1-\ep}}_tL^{\frac {2d}{d-2\al+\al\ep}}_x}\|z_2\|^{\frac12}_{L^{\frac {2}{1-\ep}}_tL^{\frac {2d}{d-2\al + \al\ep}}_x} + \|z_1\|^{\frac12}_{L^{\frac {2}{1-\ep}}_tL^{\frac {2d}{d-2\al+\al\ep}}_x}\|\langle \nabla \rangle^\sigma z_2\|^{\frac12}_{L^{\frac {2}{1-\ep}}_tL^{\frac {2d}{d-2\al + \al\ep}}_x}).
\end{align*}
Then by using Bernstein inequality and $N_1 \sim N_2$, we have
\begin{align*}
\|\langle \nabla \rangle^\sigma(|x|^{-2\al}*(z_1z_2))\|_{L^{\frac {1}{1-\ep}}_tL^{\infty}_x} \lesssim& N_1^{\sigma -2s}
\prod_{j=1}^2\|\langle \nabla \rangle^s z_j\|^{\frac12}_{L^{\frac {2}{1-\ep}}_tL^{\frac {2d}{d-2\al-\al\ep}}_x}\|\langle \nabla \rangle^s z_j\|^{\frac12}_{L^{\frac {2}{1-\ep}}_tL^{\frac {2d}{d-2\al+\al\ep}}_x}.
\end{align*}

Therefore, from Lemma \ref{prbStr-1}, we conclude that
\begin{align*}
\sum_{N_3 \ll N_2 \sim  N_1}\eqref{term6} &\lesssim R^2\|v\|_{X^{\sigma,b}}\|v_4\|_{X^{0,1-b-\theta}}
\end{align*}
outside a set of probability
\[\leq C\exp\big(-c\frac {R^2}{T^{1-\ep}\|\phi\|_{H^s}}\big).\]

Case (6.iii) : $N_3, N_1 \ll N_2$

We consider 4 cases separately
\begin{enumerate}[\qquad a.]
\item $N_4 \sim N_2 \gg N_2^{\frac {\al - 1}{2\al -1}} \gg N_3, N_1$
\item $N_4 \sim N_2 \gg N_3 \gg N_2^{\frac {\al - 1}{2\al -1}} \gg N_1$
\item $N_4 \sim N_2 \gg N_1 \gg N_2^{\frac {\al - 1}{2\al -1}} \gg N_3$
\item $N_4 \sim N_2 \gg N_3, N_1 \gg N_2^{\frac {\al - 1}{2\al -1}}$.
\end{enumerate}

Subcase (6.iii.a) : $N_4 \sim N_2 \gg N_2^{\frac {\al - 1}{2\al -1}} \gg N_3, N_1$

Similarly to Case (2.ii.a), we need to estimate
\begin{align*}
&\sum_{k,l \in \mathbb{Z}^d} C_{k,l} N_2^{2\al - d} \int\int_{\mathbb{R} \times \mathbb{R}^d} \Big|\langle \nabla \rangle^\sigma  (z^k_1z^l_2v_3)v_4\Big|dxdt.
\end{align*}
So we consider
\begin{align*}
N_2^{2\al - d} \int\int_{\mathbb{R} \times \mathbb{R}^d} &\big|  \langle \nabla \rangle^\sigma z^k_{1}z^l_{2}v_3v_4\big|dxdt,\ N_2^{2\al - d} \int\int_{\mathbb{R} \times \mathbb{R}^d} \big|z^k_{1}\langle \nabla \rangle^\sigma z^l_{2}v_3v_4\big|dxdt\\
&\text{ and}N_2^{2\al - d} \int\int_{\mathbb{R} \times \mathbb{R}^d} \big|z^k_{1}z^l_{2}\langle \nabla \rangle^\sigma v_3v_4\big|dxdt.
\end{align*}
Second term will be only considered, because remaining two terms can be handled similarly. By using H\"older inequality, Lemma \ref{xsbbi-1} and Lemma \ref{xsbbi-2}, we have
\begin{align*}
&\quad N_2^{2\al - d} \int\int_{\mathbb{R} \times \mathbb{R}^d} \big|z^k_{1}\langle \nabla \rangle^\sigma z^l_{2}v_3v_4\big|dxdt \lesssim N_2^{2\al - d}\|\langle \nabla \rangle^\sigma z^l_2 v_3\|_{L^2_{t,x}}\|z^k_1v_4\|_{L^2_{t,x}}\\
&\lesssim N_3^{\frac {d-1}{2}} N_2^{\frac {1-\al}{2} + 2\al - d}N_1^{\frac {d-1}{2} + 2\ep}N_4^{\frac {1-\al}{2} + 2\ep}\|\langle \nabla \rangle^\sigma z^l_{2}\|_{X^{0,b}}\|v_3\|_{X^{0,b}}\|z^k_{1}\|_{X^{0,b}}\|v_4\|_{X^{0,\frac12 - \ep}}
\end{align*}
Thereafter, from $\sum_{k,l}|C_{k,l}| \leq C$ (with $\|\langle \nabla \rangle^\sigma z^l_{2}\|_{X^{0,b}} = \|\langle \nabla \rangle^\sigma z_{2}\|_{X^{0,b}}, \|z^k_{1}\|_{X^{0,b}} = \|z_{1}\|_{X^{0,b}}$), Bernstein inequality and Lemma \ref{xsb}, we obtain
\begin{align*}
&\quad \sum_{k,l \in \mathbb{Z}^d}C_{k,l} N_2^{2\al - d} \int\int_{\mathbb{R} \times \mathbb{R}^d} \big|z^k_{1}\langle \nabla \rangle^\sigma z^l_{2}v_3v_4\big|dxdt\\
&\lesssim N_3^{\frac {d-1}{2}} N_2^{\frac {1-\al}{2} + 2\al - d}N_1^{\frac {d-1}{2} + 2\ep}N_4^{\frac {1-\al}{2} + 2\ep}\|\langle \nabla \rangle^\sigma z_{2}\|_{X^{0,b}}\|v_3\|_{X^{0,b}}\|z_{1}\|_{X^{0,b}}\|v_4\|_{X^{0,\frac12 - \ep}}\\
&\lesssim T^{0-} N_3^{\frac {d-1}{2} - \sigma} N_2^{\frac {1-\al}{2} + 2\al - d + \sigma - s}N_1^{\frac {d-1}{2} - s + 2\ep}N_4^{\frac {1-\al}{2} + 2\ep}\prod^2_{j=1}\|P_{N_j}\phi^\omega\|_{H^s}\|v_3\|_{X^{\sigma,b}}\|v_4\|_{X^{0,\frac12 - \ep}}
\end{align*}

Then we carry out summation in $N_1$ and $N_3$ so that
\begin{align*}
&\quad \sum_{N_3=1}^{N_2^{\frac {\al - 1}{2\al -1}}}\sum_{N_1=1}^{N_2^{\frac {\al - 1}{2\al -1}}}N_2^{2\al - d} \int\int_{\mathbb{R} \times \mathbb{R}^d} \big|z^k_{1}\langle \nabla \rangle^\sigma z^l_{2}v_3v_4\big|dxdt\\
&\lesssim T^{0-} N_2^{\al\frac{2\al-d}{2\al-1} + \sigma(\frac {\al}{2\al - 1}) - s(\frac{3\al-2}{2\al-1}) + 2\ep(\frac{3\al-2}{2\al-1})} \|\phi^\omega\|_{H^s}\|P_{N_3}\phi^\omega\|_{H^s}\|v\|_{X^{\sigma,b}}\|v_4\|_{X^{0,\frac12 - \ep}}.
\end{align*}
Since $s > \sigma\frac{2\al-1}{4\al -3}>\sigma\frac{\al}{3\al-2}$, summation in $N_2$ can be also carried out.

Hence, from Lemma \ref{prbest-2}, we have
\[\sum_{N_4 \sim N_2 \gg N_2^{\frac {\al - 1}{2\al -1}} \gg N_3, N_1}\eqref{term6} \lesssim T^{0-}R^2\|v\|_{X^{\sigma,b}}\|v_4\|_{X^{0,\frac12 - \ep}}\] outside a set of probability \[C\exp(-c\frac{R^2}{\|\phi\|^2_{H^s}}).\]

Subcase (6.iii.b) : $N_4 \sim N_2 \gg N_3 \gg N_2^{\frac {\al - 1}{2\al -1}} \gg N_1$

Similarly to Case (2.ii.a), we need to deal with
\begin{align*}
&\sum_{k,l \in \mathbb{Z}^d} C_{k,l} N_2^{2\al - d} \int\int_{\mathbb{R} \times \mathbb{R}^d} \Big|\langle \nabla \rangle^\sigma  (z^k_1z^l_2v_3)v_4\Big|dxdt.
\end{align*}
So we have to estimate
\begin{align*}
N_2^{2\al - d} \int\int_{\mathbb{R} \times \mathbb{R}^d} &\big|  \langle \nabla \rangle^\sigma z^k_{1}z^l_{2}v_3v_4\big|dxdt,\ N_2^{2\al - d} \int\int_{\mathbb{R} \times \mathbb{R}^d} \big|z^k_{1}\langle \nabla \rangle^\sigma z^l_{2}v_3v_4\big|dxdt\\
&\text{and }N_2^{2\al - d} \int\int_{\mathbb{R} \times \mathbb{R}^d} \big|z^k_{1}z^l_{2}\langle \nabla \rangle^\sigma v_3v_4\big|dxdt.
\end{align*}
Second term will be only considered, because remaining two terms can be handled similarly. H\"older inequality and Lemma \ref{xsbbi-2} yield
\begin{align*}
&N_2^{2\al - d} \int\int_{\mathbb{R} \times \mathbb{R}^d} \big|z^k_{1}z^l_{2}\langle \nabla \rangle^\sigma v_3v_4\big|dxdt \lesssim N_2^{2\al - d} \|v_3\|_{L^q_tL^r_x}\|\langle \nabla \rangle^\sigma z^l_2\|_{L^{\frac{2q}{q-2}}_tL^{\frac{2r}{r-2}}_x}\|z^k_1v_4\|_{L^2_{t,x}}\\
&\lesssim N_2^{2\al - d} N_1^{\frac {d-1}{2} + 2\ep}N_4^{\frac {1-\al}{2} + 2\ep} \|v_3\|_{L^q_tL^r_x}\|\langle \nabla \rangle^\sigma z^l_2\|_{L^{\frac{2q}{q-2}}_tL^{\frac{2r}{r-2}}_x}\|z^k_1\|_{X^{0,b}}\|v_4\|_{X^{0,\frac12 - \ep}}
\end{align*}
Thereafter we use $\sum_{k,l}|C_{k,l}|\leq C$ (with $\|z^k_1\|_{X^{0,b}}=\|z_1\|_{X^{0,b}}, \|\langle \nabla \rangle^\sigma z^l_2\|_{L^{\frac{2q}{q-2}}_tL^{\frac{2r}{r-2}}_x} = \\\|\langle \nabla \rangle^\sigma z_2\|_{L^{\frac{2q}{q-2}}_tL^{\frac{2r}{r-2}}_x}$), Bernstein inequality and Lemma \ref{str-xsb} to obtain
\begin{align*}
&\quad \sum_{k,l \in \mathbb{Z}^d}C_{k,l} N_2^{2\al - d} \int\int_{\mathbb{R} \times \mathbb{R}^d} \big|z^k_{1}z^l_{2}\langle \nabla \rangle^\sigma v_3v_4\big|dxdt\\
&\lesssim N_3^{\frac{2-\al}{q}-\sigma}N_2^{2\al - d + \sigma - s}N_1^{\frac {d-1}{2} - s + 2\ep}N_4^{\frac {1-\al}{2} + 2\ep}\|v_3\|_{X^{\sigma,b}}\|z_1\|_{X^{s,b}}\|\langle \nabla \rangle^s z_2\|_{L^{\frac{2q}{q-2}}_tL^{\frac{2r}{r-2}}_x}\|v_4\|_{X^{0,\frac12 - \ep}},
\end{align*}
where $q = \max(3, \frac{3(\al-1)(2-\al)}{(3\al-1)(d-2\al)})$.

After that, we carry out summation in $N_1$ and $N_3$ and apply Lemma \ref{xsb} as follows
\begin{align*}
&\quad \sum_{N_3=N_2^{\frac {\al - 1}{2\al -1}}}^{N_2}\sum_{N_1=1}^{N_2^{\frac {\al - 1}{2\al -1}}} \sum_{k,l \in \mathbb{Z}^d}C_{k,l} N_2^{2\al - d} \int\int_{\mathbb{R} \times \mathbb{R}^d} \big|z^k_{1}z^l_{2}\langle \nabla \rangle^\sigma v_3v_4\big|dxdt\\
&\lesssim N_2^{\frac{\al-1}{2\al-1}(\frac{2-\al}{q}+\frac{d-2\al}{2(\al-1)}(1-3\al)) + \frac{\al}{2\al-1}\sigma - \frac{3\al-2}{2\al-1}s + 2\frac{3\al-2}{2\al-1}\ep}\|v\|_{X^{\sigma,b}}\|z\|_{X^{s,b}}\|\langle \nabla \rangle^s z_2\|_{L^{\frac{2q}{q-2}}_tL^{\frac{2r}{r-2}}_x}\|v_4\|_{X^{0,\frac12 - \ep}}\\
&\lesssim T^{0-}N_2^{\frac{\al-1}{2\al-1}(\frac{2-\al}{q}+\frac{d-2\al}{2(\al-1)}(1-3\al)) + \frac{\al}{2\al-1}\sigma - \frac{3\al-2}{2\al-1}s + 2\frac{3\al-2}{2\al-1}\ep}\|v\|_{X^{\sigma,b}}\|\phi^\omega\|_{H^s}\|\langle \nabla \rangle^s z_2\|_{L^{\frac{2q}{q-2}}_tL^{\frac{2r}{r-2}}_x}\|v_4\|_{X^{0,\frac12 - \ep}}
\end{align*}
Since the exponent of $N_2$ is negative, summation in $N_2$ can be also carried out.

Therefore, from Lemma \ref{prbest-2} and Lemma \ref{prbStr-1}, we conclude that
\[\sum_{N_4 \sim N_2 \gg N_3 \gg N_2^{\frac {\al - 1}{2\al -1}} \gg N_1}\eqref{term6} \lesssim T^{0-}R^2\|v\|_{X^{\sigma,b}}\|v_4\|_{X^{0,\frac12 -2\ep}}\] outside a set of probability \[C\exp(-c\frac{R^2}{\|\phi\|^2_{H^s}}) + C\exp(-c\frac{R^2}{T^{\frac{q-2}{q}}\|\phi\|^2_{H^s}}).\]

Case(6.iii.c) : $N_4 \sim N_2 \gg N_1 \gg N_2^{\frac {\al - 1}{2\al -1}} \gg N_3$

Similarly to Case (2.ii.a), we estimate the following :
\begin{align*}
&\sum_{k,l \in \mathbb{Z}^d} C_{k,l} N_2^{2\al - d} \int\int_{\mathbb{R} \times \mathbb{R}^d} \Big|\langle \nabla \rangle^\sigma  (z^k_1z^l_2v_3)v_4\Big|dxdt.
\end{align*}
So we need to handle
\begin{align*}
N_2^{2\al - d} \int\int_{\mathbb{R} \times \mathbb{R}^d} &\big|\langle \nabla \rangle^\sigma z^k_{1}z^l_{2}v_3v_4\big|dxdt,\ N_2^{2\al - d} \int\int_{\mathbb{R} \times \mathbb{R}^d} \big|z^k_{1}\langle \nabla \rangle^\sigma z^l_{2}v_3v_4\big|dxdt\\
&\text{and }N_2^{2\al - d} \int\int_{\mathbb{R} \times \mathbb{R}^d} \big|z^k_{1}z^l_{2}\langle \nabla \rangle^\sigma v_3v_4\big|dxdt.
\end{align*}
Second term will be only considered, bewcause remaining two terms can be handled similarly. H\"older inequality and Lemma \ref{xsbbi-2} give
\begin{align*}
&\quad N_2^{2\al - d} \int\int_{\mathbb{R} \times \mathbb{R}^d} \big|z^k_{1}\langle \nabla \rangle^\sigma z^l_{2}v_3v_4\big|dxdt \lesssim N_2^{2\al - d} \|z^k_1\|_{L^4_{t,x}}\|\langle \nabla \rangle^\sigma z^l_2\|_{L^4_{t,x}}\|v_3v_4\|_{L^2_{t,x}}\\
&\lesssim N_2^{2\al - d}N_3^{\frac{d-1}{2} + 2\ep}N_4^{\frac {1-\al}{2} + 2\ep}\|z^k_1\|_{L^4_{t,x}}\|\langle \nabla \rangle^\sigma z^l_2\|_{L^4_{t,x}}\|v_3\|_{X^{0,b}}\|v_4\|_{X^{0,\frac12 - \ep}}
\end{align*}
Then, from $\sum_{k,l}|C_{k,l}| \leq C$ (with $\|z^k_1\|_{L^4_{t,x}}=\|z_1\|_{L^4_{t,x}},\|\langle \nabla \rangle^\sigma z^l_2\|_{L^4_{t,x}}=\|\langle \nabla \rangle^\sigma z_2\|_{L^4_{t,x}}$) and Bernstein inequality, we obtain
\begin{align*}
&\quad \sum_{k,l \in \mathbb{Z}^d}C_{k,l} N_2^{2\al - d} \int\int_{\mathbb{R} \times \mathbb{R}^d} \big|z^k_{1}\langle \nabla \rangle^\sigma z^l_{2}v_3v_4\big|dxdt\\
&\lesssim N_1^{-s}N_2^{2\al - d +\sigma - s}N_3^{\frac{d-1}{2} - \sigma + 2\ep}N_4^{\frac {1-\al}{2} + 2\ep}\prod^2_{j=1}\|\langle \nabla \rangle^s z_j\|_{L^4_{t,x}}\|v_3\|_{X^{\sigma,b}}\|v_4\|_{X^{0,\frac12 - \ep}}.
\end{align*}

Since $N_4 \sim N_2 \gg N_1 \gg N_2^{\frac {\al - 1}{2\al -1}} \gg N_3$, $\frac {d-1}{2} - \sigma + 2\ep>0$ and $-s < 0$, we can carry out summation in $N_1$ and $N_3$ such that
\begin{align*}
&\quad \sum_{N_1=N_2^{\frac {\al - 1}{2\al -1}}}^{N_2} \sum_{N_2=1}^{N_2^{\frac {\al - 1}{2\al -1}}}\sum_{k,l \in \mathbb{Z}^d}C_{k,l} N_2^{2\al - d} \int\int_{\mathbb{R} \times \mathbb{R}^d} \big|z^k_{1}\langle \nabla \rangle^\sigma z^l_{2}v_3v_4\big|dxdt\\
&\lesssim N_2^{\frac{(d-2\al)(1-3\al)}{2(2\al-1)} + \frac{\al}{2\al-1}\sigma - \frac{3\al-2}{2\al-1}s + \frac{3\al-2}{2\al-1}\ep_1}\|\langle \nabla \rangle^s z\|_{L^4_{t,x}}\|\langle \nabla \rangle^s z_2\|_{L^4_{t,x}}\|v\|_{X^{\sigma,b}}\|v_4\|_{X^{0,\frac12 -2\ep}}
\end{align*}
After carrying out summation in $N_2$, from Lemma \ref{prbStr-1}, we conclude that
\[\sum_{N_4 \sim N_2 \gg N_1 \gg N_2^{\frac {\al - 1}{2\al -1}} \gg N_3} \eqref{term6} \lesssim R^2\|v\|_{X^{\sigma,b}}\|v_4\|_{X^{0,\frac12 -2\ep}}\] outside a set of probability \[C\exp(-c\frac{R^2}{T^{\frac12}\|\phi\|^2_{H^s}}).\]

Subcase(6.iii.d) : $N_4 \sim N_2 \gg N_3, N_1 \gg N_2^{\frac {\al - 1}{2\al -1}}$

H\"older inequality and Lemma \ref{mass-xsb} yield $\eqref{term6}$ is bounded by
\begin{align*}
\|\langle \nabla \rangle^\sigma ((|x|^{-2\al}*(z_1z_2)v_3)\|_{L^{\frac 1{1-\ep}}_tL^{2}_x}\|v_4\|_{L^{\frac {1}{\ep}}_tL^2_x} \lesssim \|\langle \nabla \rangle^\sigma ((|x|^{-2\al}*(z_1z_2)v_3)\|_{L^{\frac 1{1-\ep}}_tL^{2}_x}\|v_4\|_{X^{0,1-b-\theta}},
\end{align*}
for some small $\ep$ such that $0< \ep < \frac {1}{\al}(\sigma - \frac {\al}{2})$.
Then we use Lemma \ref{lei} and Lemma \ref{str-xsb} to obtain
\begin{align*}
&\qquad \|\langle \nabla \rangle^\sigma ((|x|^{-2\al}*(z_1z_2)v_3)\|_{L^{\frac 1{1-\ep}}_tL^{2}_x}\\
&\lesssim \||x|^{-2\al}*(z_1z_2)\|_{L^{\frac {1}{1-\ep}}_tL^{\infty}_x}\|\langle \nabla \rangle^\sigma v_3\|_{L^\infty_tL^2_x} + \|\langle \nabla \rangle^\sigma (|x|^{-2\al}*(z_1z_2))\|_{L^{\frac {1}{1-\ep}}_tL^{\infty}_x}\|v_3\|_{L^\infty_tL^2_x}\\
&\lesssim \||x|^{-2\al}*(z_1z_2)\|_{L^{\frac {1}{1-\ep}}_tL^{\infty}_x}\|v_3\|_{X^{\sigma, b}} + \|\langle \nabla \rangle^\sigma (|x|^{-2\al}*(z_1z_2))\|_{L^{\frac {1}{1-\ep}}_tL^{\infty}_x}\|v_3\|_{X^{0, b}}.
\end{align*}
Thereafter, from Lemma \ref{frac bound} and H\"older inequality, we obtain
\begin{align*}
&\qquad \||x|^{-2\al}*(z_1z_2)\|_{L^{\frac {1}{1-\ep}}_tL^{\infty}_x} \lesssim \|z_1\|_{L^{\frac {2}{1-\ep}}_tL^{\frac {2d}{d-2\al-\al\ep}}_x}\|z_2\|_{L^{\frac {2}{1-\ep}}_tL^{\frac {2d}{d-2\al + \al\ep}}_x}\\
&\qquad \lesssim N_1^{-s}N_2^{-s}\|\langle \nabla \rangle^s z_1\|_{L^{\frac {2}{1-\ep}}_tL^{\frac {2d}{d-2\al-\al\ep}}_x}\|\langle \nabla \rangle^s z_2\|_{L^{\frac {2}{1-\ep}}_tL^{\frac {2d}{d-2\al + \al\ep}}_x}
\end{align*}
and
\begin{align*}
&\qquad \|\langle \nabla \rangle^\sigma(|x|^{-2\al}*(z_1z_2))\|_{L^{\frac {1}{1-\ep}}_tL^{\infty}_x} = \||x|^{-2\al}*(\langle \nabla \rangle^\sigma(z_1z_2))\|_{L^{\frac {1}{1-\ep}}_tL^{\infty}_x}\\
&\lesssim \|\langle \nabla \rangle^\sigma(|x|^{-2\al}*(z_1z_2))\|^{\frac12}_{L^{\frac {1}{1-\ep}}_tL^{\frac {d}{d-2\al-\al\ep}}_x}\|\langle \nabla \rangle^\sigma(|x|^{-2\al}*(z_1z_2))\|^{\frac12}_{L^{\frac {1}{1-\ep}}_tL^{\frac {d}{d-2\al + \al\ep}}_x}\\
&\lesssim (\|\langle \nabla \rangle^\sigma z_1\|^{\frac12}_{L^{\frac {2}{1-\ep}}_tL^{\frac {2d}{d-2\al-\al\ep}}_x}\|z_2\|^{\frac12}_{L^{\frac {2}{1-\ep}}_tL^{\frac {2d}{d-2\al - \al\ep}}_x} + \|z_1\|^{\frac12}_{L^{\frac {2}{1-\ep}}_tL^{\frac {2d}{d-2\al-\al\ep}}_x}\|\langle \nabla \rangle^\sigma z_2\|^{\frac12}_{L^{\frac {2}{1-\ep}}_tL^{\frac {2d}{d-2\al - \al\ep}}_x})\\
&\times (\|\langle \nabla \rangle^\sigma z_1\|^{\frac12}_{L^{\frac {2}{1-\ep}}_tL^{\frac {2d}{d-2\al+\al\ep}}_x}\|z_2\|^{\frac12}_{L^{\frac {2}{1-\ep}}_tL^{\frac {2d}{d-2\al + \al\ep}}_x} + \|z_1\|^{\frac12}_{L^{\frac {2}{1-\ep}}_tL^{\frac {2d}{d-2\al+\al\ep}}_x}\|\langle \nabla \rangle^\sigma z_2\|^{\frac12}_{L^{\frac {2}{1-\ep}}_tL^{\frac {2d}{d-2\al + \al\ep}}_x}).
\end{align*}
Then we use Bernstein inequality and $N_2 \gtrsim N_1$ to get
\begin{align*}
\|\langle \nabla \rangle^\sigma(|x|^{-2\al}*(z_1z_2))\|_{L^{\frac {1}{1-\ep}}_tL^{\infty}_x} \lesssim& N_2^{\sigma - s}N_1^{-s}
\prod_{j=1}^2\|\langle \nabla \rangle^s z_j\|^{\frac12}_{L^{\frac {2}{1-\ep}}_tL^{\frac {2d}{d-2\al-\al\ep}}_x}\|\langle \nabla \rangle^s z_j\|^{\frac12}_{L^{\frac {2}{1-\ep}}_tL^{\frac {2d}{d-2\al+\al\ep}}_x}.
\end{align*}
Thus, from $N_2 \gtrsim N_3$, we have
\begin{align*}
&\qquad \|\langle \nabla \rangle^\sigma ((|x|^{-2\al}*(z_1z_2)v_3)\|_{L^{\frac 1{1-\ep}}_tL^{2}_x}\\
&\lesssim N_2^{\sigma - s}N_1^{-s}N_3^{-\sigma}\|\langle \nabla \rangle^s z_1\|_{L^{\frac {2}{1-\ep}}_tL^{\frac {2d}{d-2\al-\al\ep}}_x}\|\langle \nabla \rangle^s z_2\|_{L^{\frac {2}{1-\ep}}_tL^{\frac {2d}{d-2\al + \al\ep}}_x}\|v_3\|_{X^{\sigma, b}}\\
& +N_2^{\sigma - s}N_1^{-s}N_3^{-\sigma}(\prod_{j=1}^2\|\langle \nabla \rangle^s z_j\|^{\frac12}_{L^{\frac {2}{1-\ep}}_tL^{\frac {2d}{d-2\al-\al\ep}}_x} \|\langle \nabla \rangle^s z_j\|^{\frac12}_{L^{\frac {2}{1-\ep}}_tL^{\frac {2d}{d-2\al+\al\ep}}_x})\|v_3\|_{X^{\sigma, b}}.
\end{align*}

Now we carry out summation in $N_1$ and $N_3$
\begin{align*}
&\sum_{N_3 = N_2^{\frac {\al - 1}{2\al -1}}}^{N_2}\sum_{N_1 = N_2^{\frac {\al - 1}{2\al -1}}}^{N_2}\|\langle \nabla \rangle^\sigma ((|x|^{-2\al}*(z_1z_2)v_3)\|_{L^{\frac 1{1-\ep}}_tL^{2}_x}\\
&\lesssim N_2^{\frac{\al}{2\al-1}\sigma - \frac{3\al-2}{2\al-1}s}\big(\|\langle \nabla \rangle^s z\|_{L^{\frac {2}{1-\ep}}_tL^{\frac {2d}{d-2\al-\al\ep}}_x}\|\langle \nabla \rangle^s z_2\|_{L^{\frac {2}{1-\ep}}_tL^{\frac {2d}{d-2\al - \al\ep}}_x}\|v\|_{X^{\sigma, b}} +\\
&\|\langle \nabla \rangle^s z\|^{\frac12}_{L^{\frac {2}{1-\ep}}_tL^{\frac {2d}{d-2\al-\al\ep}}_x}\|\langle \nabla \rangle^s z\|^{\frac12}_{L^{\frac {2}{1-\ep}}_tL^{\frac {2d}{d-2\al+\al\ep}}_x}\|\langle \nabla \rangle^s z_2\|^{\frac12}_{L^{\frac {2}{1-\ep}}_tL^{\frac {2d}{d-2\al - \al\ep}}_x}\|\langle \nabla \rangle^s z_2\|^{\frac12}_{L^{\frac {2}{1-\ep}}_tL^{\frac {2d}{d-2\al + \al\ep}}_x}\|v\|_{X^{\sigma, b}}\big)
\end{align*}

Therefore, from Lemma \ref{prbStr-1}, we conclude that
\begin{align*}
\sum_{N_4 \sim N_2 \gg N_3, N_1 \gg N_2^{\frac {\al - 1}{2\al -1}}}\eqref{term6} \lesssim R^2\|v\|_{X^{\sigma,b}}\|v_4\|_{X^{0,1-b-\theta}}
\end{align*}
outside a set of probability
\[\leq C\exp\big(-c\frac {R^2}{T^{1-\ep}\|\phi\|_{H^s}}\big).\]
\end{proof}

\section*{Acknowledgments} The author is grateful to Professors Yonggeun Cho, Soonsik Kwon and Sanghyuk Lee for valuable comments and support.

\bibliographystyle{plain}


\end{document}